\documentclass[11pt]{article}

\textwidth 15.5cm
\textheight 21.5cm
\topmargin 0cm
\evensidemargin 0in
\oddsidemargin 0in

\usepackage[utf8]{inputenc}
\usepackage{amsmath,amsthm,amssymb,enumerate,framed}
\usepackage{color,colortab,epsfig}
\usepackage{graphicx}
\usepackage{psfrag}
\usepackage{subfigure}

\newcommand{\Q}{\mathbb Q}
\newcommand{\N}{\mathbb{N}}
\newcommand{\Z}{\mathbb{Z}}
\newcommand{\R}{\mathbb{R}}
\newcommand{\conv}{\operatorname{conv}}
\newcommand{\vect}[1]{\left(\begin{array}{@{}r@{}}#1\end{array}\right)}
\newcommand{\setcond}[2]{\left\{ #1 \,:\, #2 \right\}}

\newcommand{\st}{:\;}

\newcommand{\intr}{\operatorname{int}}
\newcommand{\relintr}{\operatorname{relint}}
\newcommand{\relint}{\relintr}
\newcommand{\rec}{\operatorname{rec}}
\newcommand{\lin}{\operatorname{lin}}

\newcommand{\cl}{\operatorname{cl}}
\newcommand{\proj}{\operatorname{proj}}

\newcommand{\floor}[1]{\left\lfloor#1\right\rfloor}

\newcommand{\vol}{\operatorname{vol}}

\renewcommand{\epsilon}{\varepsilon}

\newtheoremstyle{mythmstyle}
	{\topsep}
	{\topsep}
	{\itshape}
	{}
	{\scshape}
	{.}
	{3pt}
	{}
\theoremstyle{mythmstyle}

\newtheorem{nn}{}[section]
\newtheorem{lemma}[nn]{Lemma}
\newtheorem{theorem}[nn]{Theorem}
\newtheorem{cor}[nn]{Corollary}
\newtheorem{prop}[nn]{Proposition}
\newtheorem{definition}[nn]{Definition}

\newtheorem{REMARK}[nn]{Remark}
\newenvironment{remark}{\begin{REMARK}}{\end{REMARK}}

\newenvironment{cpf}{\begin{trivlist} \item[] {\em Proof of Claim.}}{\hspace*{\stretch{1}} $\diamond$ \end{trivlist}}


\newtheoremstyle{itsemicolon}{}{}{\mdseries\rmfamily}{}{\itshape}{:}{ }{}
\newtheoremstyle{itdot}{}{}{\mdseries\rmfamily}{}{\itshape}{:}{ }{}
\theoremstyle{itdot}
\newtheorem*{msc*}{2010 Mathematics Subject Classification}
\newtheorem*{keywords*}{Keywords}


\newcommand{\aff}{\operatorname{aff}}

\newcommand{\rx}{{r}}
\newcommand{\x}{{x}}
\newcommand{\y}{{y}}

\newcommand{\g}{{g}}

\renewcommand{\u}{{u}}
\renewcommand{\a}{{a}}

\newcommand{\0}{{0}}

\newcommand{\w}{{w}}

\newcommand{\gp}{{ {\bar g}}}
\newcommand{\gt}{{{\tilde g}}}
\newcommand{\gs}{{  g}}

\newenvironment{example}[1][]{\par\medskip\noindent%
   \textbf{Example~\thetheorem.}~#1 \rmfamily\ignorespaces}{$\blacksquare$\medskip}

\numberwithin{equation}{section}

\title{A geometric approach to cut-generating functions}
\author{Amitabh Basu\footnote{Department of Applied Mathematics and Statistics, The Johns Hopkins University, MD, USA}
\and Michele Conforti\footnote{Dipartimento di Matematica, Universit\`a degli Studi di Padova, Italy. Supported by the grant ``Progetto di Ateneo 2013'' of the University of Padova.}
\and Marco Di Summa\footnotemark[2]}

\begin{document}

\maketitle

\begin{abstract}
The cutting-plane approach to integer programming was initiated more that 40 years ago: Gomory introduced the corner polyhedron as a relaxation of a mixed integer set in tableau form and Balas introduced intersection cuts for the corner polyhedron. This line of research was left dormant for several decades until relatively recently, when a paper of Andersen, Louveaux, Weismantel and Wolsey generated renewed interest in the corner polyhedron and intersection cuts. Recent developments rely on tools drawn from convex analysis, geometry and number theory, and constitute an elegant bridge between these areas and integer programming. We survey these results and highlight recent breakthroughs in this area.
\end{abstract}

\section{Introduction}\label{s:intro}

The cutting-plane approach to integer programming (IP) was initiated in the early 1970s with the works of  Gomory~\cite{MR0102437,gomory1960algorithm,Gomory63,MR0182454,gom,gomory2007atoms}, Gomory and Johnson \cite{infinite,infinite2} and Johnson \cite{johnson}  on the corner polyhedron, and of Balas \cite{bal}
 on intersection cuts generated from convex sets.
Their approach aimed at the development of a theory of valid inequalities (to be used as cuts) for integer programs, pure or mixed, \emph{independently} of the structure and the data  of the problem on hand. Gomory introduced a universal model which provided a relaxation for any integer program defined by constraints in tableau form and studied \emph{cut-generating functions}. These functions when applied to a specific IP problem, provide a valid inequality that is not satisfied by the basic solution associated with the tableau.
  \smallskip

While the point of view of Gomory was algebraic, the approach of Balas was essentially based on the geometry of the sets to be studied.
As an example, split cuts are the simplest and most effective family of intersection cuts. They are equivalent to Gomory's mixed integer (GMI) cuts, which are generated by applying a cut-generating function to a single row of a tableau, see e.g.\@ Theorem 5.5 in \cite{conforti2014integer}.
\smallskip

Possibly inspired by the deep and elegant results in combinatorial optimization and polyhedral combinatorics, research in IP then shifted its focus on the study of strong (facet-defining) valid inequalities for \emph{structured} integer programs of the combinatorial type, mostly 0,1 programs.
\smallskip

Renewed interest in Gomory's approach was recently sparked by a paper of  Andersen, Louveaux, Weismantel and Wolsey \cite{alww}.
They define a relaxation of Gomory's model whose tableaux has two rows.
This is a 2-dimensional model and  can be represented in the plane. They  show that,
besides nonnegativity constraints, the facet-defining inequalities
are naturally associated with splits (a region between two parallel lines), triangles and quadrilaterals whose interior does not contain an integer point.  This allows one to derive valid inequalities by exploiting the combined effect of two rows, instead of a single row.

The extension of this model to any dimension (i.e., number of integer variables) was pioneered by Borozan and Cornu\'ejols~\cite{BorCor} and by Basu, Conforti, Cornu\'ejols and Zambelli~\cite{bccz}. The main finding is that like in the 2-dimensional case, facet-defining inequalities
are naturally associated with full-dimensional convex sets whose interior does not contain an integer point. Furthermore these sets are polyhedra. In this survey we highlight the importance of this fact as it provides a simple formula to compute the associated cut-generating function.

Indeed, Lov\'asz \cite{lovasz} stated that maximal convex sets whose interior does not contain an integer point are polyhedra, but the first proof appears in \cite{bccz}, and an alternate proof can be found in~\cite{MR3027668}. These proofs use the simultaneous approximation theorem of Dirichlet and Minkowski's convex body theorem.  So important results from number theory and convex geometry are fundamental in proving polyhedrality, which is essential to get a computable  formula for the cut-generating function.  We highlight the use of these and other ``classical'' theorems in this survey.
\medskip

This survey first introduces in Section~\ref{sec:separation} a general mixed-integer set which provides a framework to study cut-generating functions. We then highlight in Section~\ref{sec:significance} some areas of applicability of this general mixed-integer set.

The next three sections are essentially devoted to special cases of this general mixed-integer set. Section \ref{sec:continuous-infinite-relax} deals with the case introduced by Andersen et al.\@ \cite{alww}. We first explain how the gauge function, from convex analysis, links convex sets whose interiors do not contain integer points and valid inequalities. We then give a novel and concise proof of a Lov\'asz's characterization of maximal convex sets whose interior does not contain an integer point. It is based on a proposition that characterizes the sets arising as a projection of the integers on a subspace. We then survey extensions of these results to a more general setting that includes complementarity and nonlinear constraints.

Section \ref{sec:pure} considers the special case of the mixed-integer set with only integer variables. This case, first introduced by Gomory and Johnson in~\cite{infinite,infinite2}, was the starting point of the theory of cut-generating functions and is also known as the {\em infinite group relaxation} in the literature. We show recent progress in extending classical results beyond the single-row problem to arbitrary number of rows; in particular, we emphasize the use of the {\em Knaster-Kuratowski-Mazurciewicz lemma}, a classical result from fixed-point theory, in this development.

In Section \ref{sec:mixed}, we discuss the general set with both continuous and integer variables. We focus on the aspect of {\em lifting}, where the quest for computable formulas for cutting planes leads to unexpected connections with the theory of tilings and coverings of Euclidean space. Such questions have classically been considered within the field of geometry of numbers and the recent connection with integer programming leads to a rich theory. We also highlight a recent discovery from~\cite{basu-paat-lifting} that topological arguments, such as the classical {\em Invariance of Domain theorem} from algebraic topology, can lead to important results in this area.

To summarize, in this survey we concentrate on results from the cut-generating function approach to cutting planes. An introduction to this topic can be found in Chapter 6 of~\cite{conforti2014integer}; see also~\cite{corner_survey}. There is a parallel body of work which uses finite cyclic groups to study Gomory's corner polyhedron and cutting planes. Richard and Dey's survey~\cite{Richard-Dey-2010:50-year-survey} on the group approach covers this aspect, as well as its links with cut-generating functions. Further, there has been lot of work in studying closures of families of cutting planes, and convergence issues in cutting plane algorithms. We do not discuss these topics in this survey; the reader is referred to the survey by Del Pia and Weismantel~\cite{delpia-4OR}. A recent survey by Basu, K\"oppe and Hildebrand~\cite{bhk-survey} delves deeper into aspects of cut-generating functions for the pure integer case that are discussed in Section \ref{sec:pure} of this survey.

We believe that the recent results that we survey here, such as the computable formula for cut-generating functions derived in Section \ref{sec:continuous-infinite-relax}, together with the theory of lifting discussed in Section \ref{sec:mixed}, are the first steps towards making cut-generating functions in general dimensions a viable computational tool. Prior to this, only one-dimensional cut-generating functions were explicitly provided with which one could perform computations.

In fact, even the one-dimensional theory developed by Gomory did not find its way into IP solvers for decades and was believed to be of little or no computational use. This point of view changed dramatically in the mid 1990s following the work of Balas, Ceria, Cornu\'ejols, Natraj~\cite{balas96gomory}; see~\cite{cornuejols2007revival} for a recent account. Today most cutting planes currently implemented in software are based on this one-dimensional theory, such as GMI cuts from tableau rows, mixed integer rounding inequalities and lift-and-project cuts \cite{conforti2014integer}. We hope that the corresponding progress for higher-dimensional cut-generating functions can provide another boost to the efficiency of mixed-integer optimization solvers. On the flip side, this poses greater challenges in choosing the ``right'' cutting planes, since this theory significantly increases the pool of available cuts. Computational experiments have been conducted by Dey, Lodi, Tramontani, Wolsey~\cite{dey2010experiments}, Basu, Bonami, Cornu\'ejols, Margot~\cite{basu2011experiments}, and Louveaux and Poirrier~\cite{lp}, based mostly on the special case discussed in Section 4. However, some of the developments surveyed here have not been computationally tested, and the effectiveness of these new findings remains open.


\section{Separation and valid functions}\label{sec:separation}

For fixed $n\in\N$, let $S$ be a closed subset of $\R^n$ that does not contain the origin 0. In this survey, we consider subsets of the following form:
\begin{equation}\label{def mixed-int set}
	X_{S}(R,P) := \setcond{(s,y) \in \R_+^k \times \Z_+^\ell }{ Rs + Py\in S },
\end{equation}
where $k, \ell \in \Z_+$, $R \in \R^{n \times k}$ and $P \in \R^{n \times \ell}$ are matrices. We allow $k=0$ or $\ell=0$, but not both. These sets were first introduced by Johnson in \cite{johnson} and \cite{johnson1981characterization}, based on earlier work by Gomory and Johnson in~\cite{infinite, infinite2}.
We address the following
 \smallskip

\noindent\emph {{\sc Separation problem}: Find a closed half-space  that contains $X_{S}(R,P)$ but not the origin.}\smallskip

The fact that $S$ is closed and $0\notin S$ implies $0$ is not in the closed convex hull of $X_{S}(R,P)$~\cite[Lemma 2.1]{conforti2013cut}. Hence such a half-space always exists.

This problem arises typically when one wants to design a cutting-plane method to optimize a (linear) function over $X_{S}(R,P)$ and has on hand a solution (the origin 0) to a relaxation of the problem (see Section~\ref{sec:significance}).
\smallskip

We develop a theory that for fixed $S$ addresses the separation problem \emph{independently of $R$ and $P$} by introducing the concept of valid pair.

A \emph{valid pair} $(\psi, \pi)$ for $S$ is a pair of functions $\psi, \pi\colon\R^n \to \R$ such that \emph{for every choice of $k$, $\ell$, $R$ and $P$},
\begin{equation}
	\label{psi pi ineq}
	\sum\psi(r)s_r + \sum\pi(p)y_p \ge 1
\end{equation}
is an inequality separating 0 from $X_S(R,P)$ (this is the reason for the choice of 1 for the right hand side).
We use the convention that the first sum is taken over the columns $r$ of $R$, where $s_r$ denotes the continuous variable associated with column $r$; similarly, the second sum ranges over the columns $p$ of $P$, and $y_p$ denotes the integer variable associated with column $p$. This convention for summations will be used throughout the paper. Valid pairs are also known as {\em cut-generating pairs}, and inequality \eqref{psi pi ineq} is often called a {\em cut}.\medskip

When $\ell=0$ in \eqref{def mixed-int set}, i.e., when all variables are continuous, we obtain a set of the type
\begin{equation}\label{def continuous set}
	C_{S}(R) := \setcond{s \in \R_+^k}{ Rs \in S },
\end{equation}
where $k\ge1$. A function $\psi\colon\R^n \to \R$ is a \emph{valid function} for $S$ if  $\sum\psi(r)s_r \ge 1$  is an inequality separating 0 from $C_{S}(R)$ for every $k$ and $R$. Again we use the convention that the above sum is taken over the columns $r$ of $R$.
This model, here referred to as the {\em continuous model}, will be discussed in Section~\ref{sec:continuous-infinite-relax}.
\medskip

When $k=0$, i.e., when all variables are integer, sets of the form \eqref{def mixed-int set} become
\begin{equation}\label{def integer set}
	I_{S}(P) := \setcond{y \in \Z_+^{\ell}}{ Py \in S },
\end{equation}
where $\ell\ge1$.
A function $\pi\colon\R^n \to \R$ is an \emph{integer valid function} for $S$ if  $\sum\pi(p)y_p \ge 1$  is an inequality  separating 0 from $I_{S}(P)$ for every $\ell$ and $P$.
This model, here referred to as the {\em pure integer model}, will be discussed in Section~\ref{sec:pure}.
\medskip

When both $k$ and $\ell$ are positive, we refer to \eqref{def mixed-int set} as the {\em mixed integer model}; this will be discussed in Section~\ref{sec:mixed}.
\medskip

There is a natural partial order on the set of valid pairs, namely $(\psi', \pi') \leq (\psi, \pi)$ if and only if $\psi' \leq \psi$ and $\pi' \leq \pi$. Since $\{(s,y) \colon \sum\psi'(r)s_r + \sum\pi'(p)y_p \ge 1,s\ge 0, y\ge 0\}\subseteq \{(s,y) \colon \sum\psi(r)s_r + \sum\pi(p)y_p \ge 1,s\ge 0, y\ge 0\}$ whenever $(\psi', \pi') \leq (\psi, \pi)$,
all the cuts obtained from $(\psi, \pi)$ are dominated by those obtained from $(\psi', \pi')$. The minimal elements under this partial order are called {\em minimal valid pairs.} Similarly, one defines {\em minimal valid functions} $\psi$  and {\em minimal integer valid functions} $\pi$.  An application of Zorn's lemma (see e.g. \cite[Theorem 1.1]{basu-hildebrand-koeppe-molinaro:k+1-slope}) shows that every valid pair (resp., valid function, integer valid function) is dominated by a minimal valid pair (resp., minimal valid function, minimal integer valid function).
Thus one can concentrate on the minimal valid functions and pairs.

\begin{remark}
A natural question is whether cut generating functions are sufficient in the following sense: Given a fixed closed set $S \subseteq \R^n \setminus \{0\}$ and a fixed pair of matrices $R, P$, is the closed convex hull of $X_S(R,P)$ described by the intersection of all inequalities of the type~\eqref{psi pi ineq} when we consider all possible minimal valid pairs for $S$? The same question can be phrased for the continuous model~\eqref{def continuous set}, as well as for the pure integer model~\eqref{def integer set}.

This question, in its full generality, is not settled. For the continuous model~\eqref{def continuous set}, Conforti et al.~\cite[Example 6.1]{conforti2013cut} show that for a particular  set $S$  minimal valid functions do not suffice. However, if  $S$ is contained in  the conical hull of the columns of $R$,  Cornu\'ejols et al.~\cite{cornuejols2015sufficiency} prove that $C_S(R)$ is defined by the inequalities derived from cut-generating functions. Earlier Zambelli ~\cite{zambelli} showed this to be true  when $S = b + \Z^n$ for some $b \in \R^n \setminus \Z^n$.

For the pure integer model~\eqref{def integer set},  when $n=1$ and  $R$ is a rational matrix, an affirmative answer can be deduced from~\cite[Theorem 8.3]{bhk-survey} (this result is a restatement of results appearing in~\cite{infinite}).

\end{remark}

\paragraph{Notation}  Given a convex subset $K$ of $\R^n$, we denote with  $\dim(K)$, $\intr(K)$, $\relintr(K)$, $ \cl(K)$, $ \aff(K)$, $ \rec(K)$, $\lin(K)$ the  dimension, interior, relative interior, topological closure, affine hull, recession cone and lineality space of $K$.
These are standard notions in convex analysis, see e.g. \cite{lemarechal1996convex}.

 We denote with $B(x,\varepsilon)$ the closed ball of center $x$ and radius $\varepsilon$. Given $V\subseteq \R^n$, we indicate with $\conv(V)$ its convex hull and with $\langle V \rangle$ the linear space generated by $V$. Given a linear subspace $L$, we denote by $L^\perp$ the orthogonal complement of $L$ and by $\proj_L(\cdot)$ the orthogonal projection on $L$.

\section{Significance of the mixed integer set~\eqref{def mixed-int set}}\label{sec:significance}

The model \eqref{def mixed-int set} contains as special cases several classical optimization models. Some examples are illustrated below.

\begin{enumerate}

\item{\em Gomory's relaxation of IP and extensions}. The classical way in which model \eqref{def mixed-int set} arises is as follows; see~\cite{gom}. Let $x+Py=b$ be the system of equations that defines a (final) tableau of the linear-programming (LP) relaxation of a pure IP problem, whose feasible set is
    $ \setcond{(x,y) \in \Z_+^n \times \Z_+^\ell}{x + Py = b}$.

     When $b\in \Z^n$, the LP basic solution $x=b,\,y=0$ is a solution to the IP and is an optimal solution when the tableau is final.  When $b\not\in \Z^n$, a relaxation of the above set can be obtained by dropping the nonnegativity condition on $x$. Thus the feasible set of this relaxation can be expressed only in terms of $y$ as
     \begin{equation}\label{eq-corner}
     \setcond{y \in  \Z_+^\ell} { Py \in b+\Z^n}.
     \end{equation}
     Note that this fits the setting \eqref{def integer set}  where $S=b+\Z^n$, and $0\not\in S$ because $b\not\in \Z^n$.
     The convex hull of \eqref{eq-corner} is known as the \emph{corner polyhedron}.

    Of course, if $S=b-\Z_+^n$, i.e. the condition $x\ge 0$ is maintained, no relaxation occurs. This case was one of the main motivations to study sets of the type $S=(b+\Z^n)\cap Q$, where $Q$ is a rational polyhedron, see~\cite{bccz2,dey2010constrained,yildiz2015integer}.

    The above can be extended to the mixed integer case as follows. Let $x+Rs+ Py=b$ be the system of equations that defines a (final) tableau of the LP relaxation of a mixed integer program, where $s$ is the vector of nonnegative continuous variables, $y$ is the vector of nonnegative integer variables, and $x$ is the vector of basic nonnegative variables, which may be continuous or integer. However if the $x\ge 0$ constraint is relaxed, one may assume that $x$ is a vector of integer variables, as every continuous basic variable is defined by the corresponding equation of the tableau with no further restriction; hence these equations can now be dropped. The model in this case is
    \begin{equation}\label{eq-mixed-corner}
    \setcond{(s,y) \in  \R_+^k\times \Z_+^\ell} { Rs+Py \in b+\Z^n}.
    \end{equation}
    Andersen, Louveaux, Weismantel and Wolsey \cite{alww}, Borozan and Cornu\'ejols \cite{BorCor}, and Basu, Conforti, Cornu\'ejols and Zambelli \cite{bccz} studied the relaxation of the above model in which the integrality of the nonbasic variables is relaxed:  $\{s \in  \R_+^{k+\ell} :\, (R,P)s \in b+\Z^n\}$.
    This important special case, which fits  \eqref{def continuous set},  will be discussed in Section~\ref{sec:Zn+b}.
    Again, sets $S$ different from $b+\Z^n$ may be considered.


%

\item {\em Mixed integer (structured) convex programs.} Mixed integer programming with convex constraints is a powerful generalization of mixed integer linear programming that can model problems in applications with inherent nonlinearities~\cite{Atamturk_Narayanan_10,Atamturk_Narayanan_11,ceria1999convex,Cezik_Iyengar_05,stubbs1999branch}. The classical model here is  $$\setcond{(x,s,y) \in \R^n \times \R_+^k \times \Z_+^\ell }{ Rs + Py + x = b, \;\; x \in K\cap (\R^t\times \Z^{n-t})}$$ where $K$ is a convex set. A special case of this model is {\em mixed integer conic programming}, where $K$ is taken to be a closed, convex, pointed cone.
This framework is readily obtained from~\eqref{def mixed-int set} by setting $S=b - K\cap (\R^t\times \Z^{n-t})$.

\item

{\em Complementarity problems with integer constraints.}  In such problems, the feasible region consists of all integer points in a given polyhedron $Q= \{(x,y) \in \R^n_+ \times \R^\ell_+ \colon x+ Py = b\}$ that satisfy the  complementarity constraints
$ x_ix_j = 0,\, (i,j) \in E$ where $E$ is a subset of $\{1, \ldots, n\} \times \{1, \ldots, n\}$.
This can be modeled using~\eqref{def integer set} by setting $S = b - \{x \in \Z^n_+ \colon x_ix_j = 0,\, (i,j) \in E\}$.

%
\end{enumerate}

%

\section{The continuous model}\label{sec:continuous-infinite-relax}

Given a closed set $S\subseteq \R^n\setminus \{0\}$, we study valid functions $\psi\colon\R^n \to \R$ for the model $C_S(R)$, as defined in \eqref{def continuous set}. We characterize  the valid functions that are minimal. We will see that minimal valid functions for $S$ are naturally associated with maximal $S$-free convex sets. A closed, convex set $K\subseteq \R^n$ is \emph{$S$-free} if $\intr(K)\cap S=\emptyset$,  and an $S$-free convex set $K$ is \emph{maximal} if $K$ is not properly contained in any $S$-free convex set. With a straightforward application of Zorn's lemma, it can be shown that every $S$-free convex set is contained in a maximal one~\cite{conforti2013cut}.

In Section~\ref{sec:Zn+b} we will consider the case $S=b+\Z^n$ for some fixed $b\in\R^n\setminus\Z^n$. As discussed in Section~\ref{sec:significance}, this case is of particular importance in integer programming. We will then treat the more general case of an arbitrary closed set $S\subseteq\R^n\setminus\{0\}$ in Section~\ref{sec:general-S}.

\subsection{The case  $S=b+\Z^n$}\label{sec:Zn+b}

Here we assume $S=b+\Z^n$ for a fixed $b\in\R^n\setminus\Z^n$, hence $0\notin S$.

We recall some definitions from convex analysis. A function $g\colon \R^n \to \R$ is {\em positively homogeneous} if $g(\lambda r)=\lambda g(r)$ for every $r\in \R^n$ and every $\lambda> 0$. \index{positively homogeneous} Note that if $g$ is positively homogeneous, then $g(0)=0$. Indeed, for any $\lambda>0$, we have that  $g(0)=g(\lambda 0)=\lambda g(0)$, which implies that $g(0)=0$. A function $g\colon \R^n \to \mathbb{R}$ is {\em subadditive} if $g(r^1)+g(r^2) \geq g(r^1+r^2)$  for all $r^1,r^2 \in \R^n$.  \index{subadditive} The function $g$ is {\em sublinear} if it is both subadditive and positively homogeneous. Note that since sublinear functions are convex, they are continuous in the interior of their domain. The following lemma appears first in \cite{BorCor}.


\begin{lemma} \label{Le:Cont-oldProperties}
Assume $S=b+\Z^n$ for some $b\notin\Z^n$, and let $\psi\colon\R^n\to \R$ be a minimal valid function for $S$. Then $\psi$ is sublinear and nonnegative.
\end{lemma}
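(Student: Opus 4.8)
The plan is to exploit minimality by constructing, from the given minimal valid $\psi$, a \emph{sublinear} valid function that lies pointwise below it; minimality then forces the two to coincide. Concretely, I would define the \emph{sublinear hull}
$$\rho(r) := \inf\Big\{ \textstyle\sum_{i=1}^m \lambda_i\,\psi(r^i) : m\in\N,\ \lambda_i>0,\ \sum_{i=1}^m \lambda_i r^i = r \Big\},$$
which is the largest positively homogeneous and subadditive function bounded above by $\psi$ (the trivial representation $r=1\cdot r$ shows $\rho\le\psi$). Routine manipulations of representations show $\rho$ is positively homogeneous and subadditive as soon as it is known to be real-valued, so the two substantive points are (i) that $\rho$ never equals $-\infty$, and (ii) that $\rho$ is itself a valid function.

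For validity I would take an arbitrary feasible configuration $\{(r^j,s_j)\}_{j=1}^k$ for some $R$, i.e.\ with $\sum_j s_j r^j\in S$, fix $\eps>0$, and choose for each $j$ a representation $r^j=\sum_i \lambda^j_i q^j_i$ with $\sum_i\lambda^j_i\psi(q^j_i)\le\rho(r^j)+\eps$. Merging these into one large configuration with columns $q^j_i$ and coefficients $s_j\lambda^j_i\ge 0$ leaves the weighted sum equal to $\sum_j s_j r^j\in S$, so the merged configuration is again feasible; applying validity of $\psi$ to it and regrouping yields $\sum_j s_j(\rho(r^j)+\eps)\ge 1$, and $\eps\to 0$ gives $\sum_j s_j\,\rho(r^j)\ge 1$. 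This is exactly where the hypothesis that $\psi$ is valid \emph{for every $R$} is used, since the merged configuration has more columns than the original. For finiteness, given any $r$ and any $\mu>0$ I would use the two-column feasible configuration consisting of $r$ with coefficient $\mu$ together with $v:=b-\mu r$ with coefficient $1$ (whose weighted sum is $b\in S$); substituting an arbitrary representation of $r$ into the first column and invoking validity of $\psi$ bounds every representation sum below by $(1-\psi(v))/\mu$, whence $\rho(r)\ge(1-\psi(v))/\mu>-\infty$. Thus $\rho$ is a sublinear valid function with $\rho\le\psi$, and minimality of $\psi$ forces $\psi=\rho$; in particular $\psi$ is sublinear, and hence continuous, being finite and convex on $\R^n$.

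It remains to prove nonnegativity, for which I would now use sublinearity freely. For a nonzero integer direction $w\in\Z^n$, the configuration $(b,1),(w,t)$ with $t\in\N$ is feasible because $b+tw\in b+\Z^n=S$, so validity gives $\psi(b)+t\,\psi(w)\ge 1$ for all positive integers $t$; if $\psi(w)<0$ the left-hand side would tend to $-\infty$, a contradiction, so $\psi(w)\ge 0$. Positive homogeneity extends this to $\psi\ge 0$ on all of $\Q^n$ (scale any rational vector to an integer one), and continuity of $\psi$ together with density of $\Q^n$ in $\R^n$ upgrades it to $\psi\ge 0$ everywhere.

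The main obstacle is the validity of the hull $\rho$ together with its finiteness: the validity argument hinges on merging representations across columns and genuinely relies on $\psi$ being valid for configurations of arbitrary size, while finiteness must be secured before one may even speak of $\rho$ as a function. I would present the two-column lower-bound estimate for the latter and verify that it interacts correctly with positive homogeneity at $r=0$, which pins down $\rho(0)=0$. Everything else reduces to bookkeeping with representations and the elementary scaling and limiting arguments above.
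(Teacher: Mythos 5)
Your proof is correct, but it reaches sublinearity by a genuinely different route than the paper. The paper's proof is a \emph{local perturbation} argument: if subadditivity failed at a pair $(r^1,r^2)$, one lowers $\psi$ at the single point $r^1+r^2$ to the value $\psi(r^1)+\psi(r^2)$ and checks that the perturbed function is still valid by rerouting the mass on the column $r^1+r^2$ onto the columns $r^1$ and $r^2$; positive homogeneity is handled by an analogous one-point perturbation, and minimality is invoked once per failed relation. You instead build the \emph{global sublinear hull} $\rho\le\psi$, prove it is finite and valid by the representation-merging argument, and invoke minimality once to conclude $\psi=\rho$. This is essentially the mechanism of the paper's Theorem~\ref{th-minimal-sublinear} (due to Conforti, Cornu\'ejols, Daniilidis, Lemar\'echal and Malick for general closed $S$), which you have specialized to $S=b+\Z^n$; what it buys is more than the lemma asks for --- every valid function, minimal or not, is dominated by a sublinear valid function, and the argument carries over verbatim to arbitrary closed $S\subseteq\R^n\setminus\{0\}$ (replace $b$ by any $\bar s\in S$ in your two-column finiteness bound). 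The price is that you must separately secure finiteness of the infimum, a step the paper's local perturbations avoid entirely, and your finiteness and $\rho(0)=0$ bookkeeping, while correct, is the most delicate part of your write-up. For nonnegativity the two arguments are essentially the same exploitation of invariance of $S$ under integer translates: the paper pumps up the coefficient of a rational column $\tilde r$ by a multiple of the denominator $q$, while you use the two-column configuration $(b,1),(w,t)$ for $w\in\Z^n$ and then scale by positive homogeneity; both finish with convexity, continuity, and density of $\Q^n$.
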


\begin{proof} We first note that $\psi (0) \geq 0$. Indeed, consider any point $\bar s\in C_S(R)$ for some $n\times k$ matrix $R$ containing the 0-column.
Let $\tilde s = \bar s$ except for the component $\tilde s_0$, which is set to an arbitrarily large value $k$. Since $\tilde s \in C_S(R)$ and $\psi$ is valid, we have that  $\sum \psi(r) \tilde s_r+\psi(0)k\geq 1$.
For this inequality to hold for all $k>0$, we must have  $\psi (0) \geq 0$.\medskip

\noindent (a) {\em $\psi$ is sublinear.} We first prove that $\psi$ is subadditive. When $r^1=0$ or $r^2=0$, inequality $\psi(r^1) + \psi (r^2) \geq \psi(r^1+r^2)$ follows from $\psi (0) \geq 0$. Assume that for  $r^1,r^2\neq 0$,  $\psi(r^1) + \psi (r^2) < \psi(r^1+r^2)$.  Set $\psi'(r^1+r^2)=\psi(r^1) + \psi (r^2)$ and  $\psi'(r)=\psi(r)$ for $r\ne r^1+r^2$. Then $\psi'\le\psi$, $\psi'\ne\psi$. We show that  $\psi'$ is  a valid function, a contradiction to the minimality of $\psi$.

Consider any $\bar s  \in C_S(R)$ for some matrix $R$.  We assume, without loss of generality, that $r^1$, $r^2$ and $ r^1 + r^2$ are columns of $R$ (otherwise, simply add the missing vectors as columns and put a value of 0 for the corresponding component of $\bar s$).
Define $\tilde s$ as follows:
$$\tilde s_r := \left\{ \begin{array}{ll} \bar s_{}+\bar s_{r^1 + r^2} &
\mbox{if } r = r^1 \\ \bar s_{r}+\bar s_{r^1 + r^2} &
\mbox{if } r = r^2 \\ 0 & \mbox{if } r = r^1+r^2 \\
\bar s_r & \mbox{otherwise.} \end{array} \right.$$
Note that $\tilde s \geq 0$ and $R\tilde s = R\bar s  \in S$, thus $\tilde s \in C_S(R)$. Using the definitions of $\psi'$ and $\tilde s$, it is easy to verify that
$\sum \psi'(r) \bar s_r=\sum \psi (r) \tilde s_{r} \geq 1,$ where the last inequality follows from the facts that $\psi$ is valid and $\tilde s\in C_S(R)$. This shows that $\psi'$ is valid. \medskip

We next show that $\psi$ is positively homogeneous. Suppose there exists $\tilde r\in\R^n$ and $\lambda> 0$ such that  $\psi(\lambda\tilde r)\neq \lambda\psi(\tilde r)$. Without loss of generality we may assume that $\psi(\lambda\tilde r)< \lambda\psi(\tilde r)$. Define a function $\psi'$ by
$\psi'(\tilde r) :=  \lambda^{-1}\psi(\lambda\tilde r)$, $\psi'(r):=\psi(r)$ for all $r \not= \tilde r$.
 It is easy to see that $\psi'$ is valid, contradicting the fact that $\psi$ is minimal.
Therefore $\psi$ is positively homogeneous.
\medskip

\noindent (b) {\em $\psi$ is nonnegative.} Suppose $\psi (\tilde r) < 0$ for some $\tilde r\in\Q^n$. Let $q\in\Z_+$ be such that $q\tilde r\in \Z^n$ and let $\bar s\in C_S(R)$, where $\tilde r$ is a column of $R$. Let $\tilde s$ be defined by $\tilde s_{\tilde r} := \bar s_{\tilde r} + Mq$
where $M$ is a positive integer, and $\tilde s_r := \bar s_r$ for $r \ne \tilde r$.
Then $\tilde s\in C_S(R)$ and $\sum \psi (r) \tilde s_r = \sum \psi (r) \bar s_r + \psi (\tilde r) Mq$. Since $\psi(\tilde r)Mq<0$ and $M$ is any positive integer, this sum can be made smaller than 1, a contradiction to the validity of $\psi$.

Since $\psi$ is sublinear, $\psi$ is convex and therefore  continuous. Thus, as $\psi$ is nonnegative over $\Q^n$ and $\Q^n$ is dense in $\R^n$, $\psi$ is nonnegative over $\R^n$.
\end{proof}

Let $K \subseteq \R^n$ be a closed convex set with the origin in its interior.
A standard concept in convex analysis~\cite{lemarechal1996convex,Rock} is that of {\em gauge}, which is the function $\gamma_K$  defined by
$$\gamma_K(r) :=\inf\left\{t>0\st  \frac{r}{t}\in K\right\}\quad \mbox{ for all } r\in \R^n.$$
Since the origin is in the interior of $K$, $\gamma_K(r)<+\infty$ for all $r\in \R^n$.
Furthermore $\gamma_K(r) \leq 1$ if and only if $r \in K$, and $\intr(K)=\{r\in \R^n\st \gamma_K(r) < 1\}$: since $\gamma_K$ is a continuous function, $\gamma_K(\bar r)<1$ implies $\gamma_K(r)< 1$  for every $r$ close to $\bar r$, and since $\gamma_K$ is positively homogeneous, we have that $\gamma_K(\bar r)=1$ implies $\gamma_K( r)> 1$ if $r=(1+\varepsilon)\bar r$ for $\varepsilon>0$.

The following lemma is standard in convex analysis, see for instance  \cite{lemarechal1996convex}.

\begin{lemma} \label{lemma:gauge->sub} Given a closed convex set $K\subseteq\R^n$ with the origin in its interior, the gauge $\gamma_K$ is a nonnegative sublinear function.\\ Conversely, given a function $\gamma\colon\; \R^n\to \R$ which is nonnegative and sublinear, let $$K_{\gamma}:=\{x\in \R^n\st \gamma(x)\le 1\}.$$ Then $K_{\gamma}$ is a closed convex set with the origin in its interior, and $\gamma$ is the gauge of $K_{\gamma}$.
\end{lemma}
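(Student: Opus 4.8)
The plan is to prove the two implications separately, in each case reducing everything to the definition of the gauge together with the defining properties of a sublinear function (subadditivity and positive homogeneity) and the two facts already recorded above: that the origin being interior to $K$ forces $\gamma_K$ to be finite everywhere, and that a finite sublinear function on $\R^n$ is convex and hence continuous.

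For the forward direction I would verify the three defining properties in turn. Nonnegativity is immediate, since the infimum defining $\gamma_K(r)$ is taken over positive reals $t$. Positive homogeneity follows from the substitution $t=\lambda t'$ in the defining infimum: for $\lambda>0$ one obtains $\gamma_K(\lambda r)=\inf\{\lambda t'>0\st r/t'\in K\}=\lambda\gamma_K(r)$. For subadditivity, given $r^1,r^2$ I would pick any $t_1>\gamma_K(r^1)$ and $t_2>\gamma_K(r^2)$ with $r^i/t_i\in K$, and write $\frac{r^1+r^2}{t_1+t_2}$ as the convex combination $\frac{t_1}{t_1+t_2}\cdot\frac{r^1}{t_1}+\frac{t_2}{t_1+t_2}\cdot\frac{r^2}{t_2}$ of two points of $K$; convexity of $K$ then places this point in $K$, so $\gamma_K(r^1+r^2)\le t_1+t_2$, and letting $t_i\downarrow\gamma_K(r^i)$ yields $\gamma_K(r^1+r^2)\le\gamma_K(r^1)+\gamma_K(r^2)$.

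For the converse, with $\gamma$ nonnegative and sublinear and $K_\gamma=\{x\st\gamma(x)\le 1\}$, I would establish the four required conclusions. Convexity of $K_\gamma$ follows directly: for $x,y\in K_\gamma$ and $\lambda\in[0,1]$, subadditivity and positive homogeneity give $\gamma(\lambda x+(1-\lambda)y)\le\lambda\gamma(x)+(1-\lambda)\gamma(y)\le 1$. Closedness follows because $\gamma$, being finite and sublinear on all of $\R^n$, is convex and therefore continuous, so $K_\gamma=\gamma^{-1}((-\infty,1])$ is the preimage of a closed set under a continuous map. That the origin is interior follows from $\gamma(0)=0<1$ together with continuity of $\gamma$ at $0$. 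Finally, to see $\gamma=\gamma_{K_\gamma}$ I would compute directly: by positive homogeneity $\gamma(r/t)=\gamma(r)/t$ for $t>0$, so $\gamma_{K_\gamma}(r)=\inf\{t>0\st\gamma(r)/t\le 1\}=\inf\{t>0\st t\ge\gamma(r)\}$, which equals $\gamma(r)$ both when $\gamma(r)>0$ and, since the infimum of the positive reals is $0$, when $\gamma(r)=0$.

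There is no serious obstacle here, as the statement is a standard fact of convex analysis. The only points requiring care are the subadditivity argument, where one should work with approximating values $t_i$ strictly above the infima rather than assuming attainment (attainment does in fact hold because $K$ is closed), and the boundary case $\gamma(r)=0$ in the final computation, where one must check that the infimum over positive $t$ returns the value $0$. It is also worth flagging that the forward direction silently uses that $\gamma_K$ is real-valued, which is precisely the finiteness guaranteed by the origin lying in $\intr(K)$, and that the converse relies on the continuity of finite sublinear functions; both facts are already established in the discussion preceding the lemma.
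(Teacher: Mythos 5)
The paper does not actually prove this lemma: it is stated as a standard fact of convex analysis with a reference to Hiriart-Urruty and Lemar\'echal, so there is no in-paper argument to compare against. Your proof is correct and is precisely the standard argument the authors are implicitly invoking: all the steps check out, including the two delicate points you flag (working with $t_i$ strictly above the infima, which is harmless since $\{t>0 \st r/t\in K\}$ is an upward-closed interval because $0\in K$ and $K$ is convex, and the case $\gamma(r)=0$ in the final computation). Nothing further is needed.
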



\begin{lemma}\label{lemma:psi-gauge1}
Assume $S=b+\Z^n$ for some $b\notin\Z^n$. Let $K\subseteq\R^n$ be a closed convex set with $0\in \intr(K)$ and
let $\psi$ be the gauge of $K$. Then $\psi$ is a valid function for $S$ if and only if $K$ is $S$-free.
\end{lemma}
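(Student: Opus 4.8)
The plan is to prove both implications by exploiting the two properties of the gauge recorded just above the statement: that $\psi=\gamma_K$ is nonnegative and sublinear (Lemma~\ref{lemma:gauge->sub}), and that $\gamma_K(r)<1$ precisely when $r\in\intr(K)$, equivalently $\gamma_K(r)\ge 1$ precisely when $r\notin\intr(K)$. The $S$-freeness of $K$ is exactly the statement that $S$ avoids $\intr(K)$, so it translates directly into a lower bound of $1$ on the gauge of every point of $S$; this is the bridge connecting the two notions.

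For the direction ``$K$ is $S$-free $\Rightarrow$ $\psi$ valid'', I would fix an arbitrary matrix $R$ and a point $s\in C_S(R)$, so that $s\ge 0$ and $q:=Rs=\sum_r r\,s_r\in S$. Using positive homogeneity to write $\psi(r)s_r=\gamma_K(s_r r)$ for each column (legitimate since $s_r\ge 0$ and $\gamma_K(0)=0$), and then subadditivity to aggregate the terms, I obtain
\[
\sum_r \psi(r)s_r=\sum_r \gamma_K(s_r r)\ge \gamma_K\Bigl(\textstyle\sum_r s_r r\Bigr)=\gamma_K(q).
\]
Since $q\in S$ and $K$ is $S$-free, $q\notin\intr(K)$, whence $\gamma_K(q)\ge 1$; combining the two facts gives $\sum_r\psi(r)s_r\ge 1$, the required inequality. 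As $R$ and $s$ were arbitrary, $\psi$ is valid.

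For the converse I would argue by contrapositive. If $K$ is not $S$-free, pick $q\in\intr(K)\cap S$; then $\gamma_K(q)<1$. Take $R$ to be the single-column matrix whose only column is $q$, and set $s=1$. Then $Rs=q\in S$, so $s\in C_S(R)$, yet $\sum_r\psi(r)s_r=\psi(q)=\gamma_K(q)<1$, so the inequality $\sum_r\psi(r)s_r\ge 1$ fails for this choice and $\psi$ is not valid.

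The argument is short and I do not anticipate a serious obstacle; the one point requiring care is the first direction, where both halves of sublinearity must be invoked in the right order---positive homogeneity to absorb the nonnegative coefficients $s_r$ into the argument of $\gamma_K$, and then subadditivity to collapse the sum into the single value $\gamma_K(Rs)$. I would also remark that, although the statement specializes to $S=b+\Z^n$, neither implication uses anything about $S$ beyond $0\notin S$ and closedness, so the same proof in fact delivers the result for any admissible closed set $S\subseteq\R^n\setminus\{0\}$.
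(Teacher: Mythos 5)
Your proof is correct and follows essentially the same route as the paper's: positive homogeneity plus subadditivity of the gauge to get $\sum_r\psi(r)s_r\ge\gamma_K(Rs)\ge 1$ for the ``if'' direction, and a single-column matrix witnessing failure of validity for the ``only if'' direction. Your closing remark that the argument works for any closed $S\subseteq\R^n\setminus\{0\}$ is also accurate and is consistent with how the paper later generalizes this correspondence in Section 4.2.
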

\begin{proof} By Lemma \ref{lemma:gauge->sub}, $\psi$ is sublinear.
We prove the ``if'' part. Assume that $K$ is $S$-free and consider $s\in C_S(R)$ for some matrix $R$. That is, $\sum rs_r= b+x$, where $x\in \Z^n$ and the sum ranges  over the columns $r$ of $R$. Then
$$\textstyle \sum\psi(r)s_r=\sum\psi(rs_r)\ge \psi(\sum rs_r)=\psi(b+x)\ge 1,$$
where the first equality follows by positive homogeneity of $\psi$, the first inequality by subadditivity, and the last from the fact  that $\psi$ is the gauge of $K$ and $b+x\notin  \intr(K)$ because $x\in \Z^n$ and $K$ is $S$-free.

For the ``only if'' part, assume $b+x\in \intr(K)$, with $x\in\Z^n$. Let $R$ be the $n\times 1$ matrix $b+x$. Then the point defined by $s_{b+x}=1$ is in $C_S(R)$, and $\psi(b+x)<1$ because $\psi$ is the gauge of $K$ and $b+x\in \intr(K)$. Thus $\psi$ is not a valid function for $S$.
\end{proof}

The following theorem (see \cite{bccz,BorCor}) shows the correspondence between minimal valid functions for $S$ and maximal $S$-free convex sets, when $S=b+\Z^n$.

\begin{theorem}\label{thm:psi-B} Assume $S=b+\Z^n$ for some $b\notin\Z^n$. A function $\psi\colon\R^n\to \R$ is a minimal valid function for $S$ if and only if there exists some maximal $S$-free convex set $K\subseteq\R^n$ such that $0\in \intr(K)$ and $\psi$ is the gauge of $K$.
\end{theorem}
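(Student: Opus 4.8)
The plan is to assemble the three preceding lemmas into a correspondence between minimal valid functions and gauges of maximal $S$-free sets, and then to transfer minimality of $\psi$ into maximality of $K$ (and back) by means of two elementary order-theoretic properties of the gauge that were recorded in the paragraph preceding Lemma \ref{lemma:gauge->sub}: first, for closed convex sets with the origin in their interiors the gauge is inclusion-reversing, i.e. $K\subseteq K'$ implies $\gamma_{K'}\le\gamma_K$; and second, a gauge determines its set through $K=\{r\in\R^n\st\gamma_K(r)\le1\}$, so that $\gamma_K=\gamma_{K'}$ forces $K=K'$.

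For the forward implication I would start from a minimal valid function $\psi$. By Lemma \ref{Le:Cont-oldProperties} it is nonnegative and sublinear, so the converse part of Lemma \ref{lemma:gauge->sub} yields a closed convex set $K:=\{r\st\psi(r)\le1\}$ with $0\in\intr(K)$ whose gauge is exactly $\psi$. Since $\psi$ is valid and equals $\gamma_K$, Lemma \ref{lemma:psi-gauge1} shows that $K$ is $S$-free. It then remains to upgrade $S$-freeness to maximality. Here I invoke the Zorn's-lemma existence result quoted earlier in Section~\ref{sec:continuous-infinite-relax}: $K$ is contained in some maximal $S$-free convex set $K'$. As $0\in\intr(K)\subseteq\intr(K')$, the gauge $\gamma_{K'}$ is finite-valued, is valid by Lemma \ref{lemma:psi-gauge1}, and satisfies $\gamma_{K'}\le\gamma_K=\psi$ because $K\subseteq K'$. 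Minimality of $\psi$ forces $\gamma_{K'}=\psi=\gamma_K$, whence $K=K'$; thus $K$ was already maximal.

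For the converse, let $K$ be a maximal $S$-free convex set with $0\in\intr(K)$ and $\psi=\gamma_K$. Validity of $\psi$ is immediate from Lemma \ref{lemma:psi-gauge1}. To prove minimality I would take any valid $\psi'\le\psi$ and, using the Zorn's-lemma domination result stated earlier, pass to a minimal valid function $\psi''\le\psi'$. The forward implication, already established, produces a maximal $S$-free set $K''$ with $0\in\intr(K'')$ and $\psi''=\gamma_{K''}$. From $\gamma_{K''}=\psi''\le\psi=\gamma_K$ I obtain $K\subseteq K''$, and maximality of $K$ together with the $S$-freeness of $K''$ gives $K=K''$, i.e. $\psi''=\psi$. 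The sandwich $\psi''\le\psi'\le\psi=\psi''$ then yields $\psi'=\psi$, so $\psi$ is minimal.

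I expect the only genuinely delicate point to be the maximality step in the forward direction. The subtlety is not the inclusion-reversal of the gauge, which is routine, but rather ensuring that the maximal $S$-free set $K'$ delivered by Zorn's lemma still carries the origin in its interior; this is guaranteed precisely because $K'\supseteq K$ and $0\in\intr(K)$, and it is exactly what makes $\gamma_{K'}$ well defined and Lemma \ref{lemma:psi-gauge1} applicable. Once that is in place, minimality of $\psi$ and the inclusion-reversal collapse $K'$ back onto $K$, and the symmetric argument handles the converse.
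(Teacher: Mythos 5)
Your proposal is correct and follows essentially the same route as the paper: Lemma \ref{Le:Cont-oldProperties} plus Lemma \ref{lemma:gauge->sub} to realize $\psi$ as the gauge of a closed convex $K$ with $0\in\intr(K)$, Lemma \ref{lemma:psi-gauge1} to translate validity into $S$-freeness, and the inclusion-reversal of the gauge to trade minimality for maximality (the paper phrases this as a contradiction with a properly larger $S$-free set rather than passing to a Zorn-maximal one, but the two are interchangeable). The converse, which the paper dismisses as straightforward, is handled correctly by your reduction to a dominating minimal valid function.
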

\begin{proof} Assume that $\psi$ is a minimal valid function. By Lemma \ref{Le:Cont-oldProperties}, $\psi$ is a nonnegative sublinear function, and by Lemma \ref{lemma:gauge->sub}, $\psi$ is the gauge of a closed convex set $K$ such that $0\in \intr(K)$. Since $\psi$ is a valid function for $S$, by Lemma \ref{lemma:psi-gauge1}, $K$ is an $S$-free convex set.
We prove that $K$ is a maximal $S$-free convex set. Suppose not, and let $K'$ be an $S$-free convex set properly containing $K$. Let $\psi'$ be the gauge of $K'$. By Lemma \ref{lemma:psi-gauge1}, $\psi'$ is a valid function, and since $K\subsetneq K'$, we have that $\psi'\leq\psi$ and $\psi'\neq \psi$.  This contradicts the minimality of $\psi$. The converse is straightforward.
\end{proof}

When $S=b+\Z^n$, in view of Theorem \ref{thm:psi-B} characterizing minimal valid functions amounts to characterizing maximal $S$-free convex sets containing $0$ in their interior. We will see that every maximal $S$-free convex set is a polyhedron. Therefore, if a maximal $S$-free convex set contains 0 in its interior, then it is a polyhedron of the form $K=\{x\in \R^n\st a_ix\le 1 ,\, i\in I\}$ for some finite set $I$.  This turns out to be very useful, as it can be employed to obtain an explicit formula for the computation of the minimal valid function associated with $K$, i.e., the gauge of $K$.
The formula is stated in the following theorem.

\begin{theorem}\label{TH-gaugePolyhedron}
Assume $S=b+\Z^n$ for some $b\notin\Z^n$.
Then every maximal $S$-free convex set is a polyhedron.
Moreover, if a maximal $S$-free polyhedron $K$ with $0\in\intr(K)$ is given by $K=\{x\in \R^n\st a_ix\le 1,\,\forall i\in I\}$ for some finite set $I$, then the gauge $\psi$ of $K$ is
\begin{equation}\label{eq-gauge-poly}\psi(r)=\max_{i\in I}a_ir.\end{equation}
\end{theorem}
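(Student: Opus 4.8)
The statement has two independent parts: that every maximal $S$-free convex set is a polyhedron, and that the gauge of such a polyhedron is given by \eqref{eq-gauge-poly}. The plan is to dispatch the gauge formula first, since for that clause the polyhedral description $K=\{x\st a_ix\le 1,\ i\in I\}$ with $0\in\intr(K)$ is taken as hypothesis, and only then to treat polyhedrality, where the real work lies. For the formula I would simply unwind the definition of the gauge: for $r\in\R^n$ and $t>0$ one has $r/t\in K$ if and only if $a_ir\le t$ for every $i\in I$, i.e. if and only if $t\ge\max_{i\in I}a_ir$. Hence $\{t>0\st r/t\in K\}=(0,\infty)\cap[\max_{i\in I}a_ir,\,\infty)$, so $\psi(r)=\gamma_K(r)=\max\{0,\ \max_{i\in I}a_ir\}$. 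To match \eqref{eq-gauge-poly} it then remains only to show that $\max_{i\in I}a_ir\ge 0$ for every $r$.

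This last point is where I would use that $S=b+\Z^n$ is a full lattice translate. If $\max_{i\in I}a_ir<0$ for some $r$, then $a_id<0$ for all $i$ whenever $d$ is close to $r$, so the recession cone $\rec(K)=\{d\st a_id\le 0,\ i\in I\}$ is full-dimensional: it contains a ball $B(r,\rho)$, hence the scaled balls $B(tr,t\rho)\subseteq\rec(K)$ for all $t>0$. Since $0\in\intr(K)$ and $K+\rec(K)\subseteq K$, the interior of $K$ then contains balls of radius $t\rho\to\infty$; but any ball of radius exceeding $\sqrt{n}/2$ meets $b+\Z^n$, contradicting the $S$-freeness of $K$. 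Thus no such $r$ exists, $\max_{i\in I}a_ir\ge 0$ always, and the gauge formula follows. Note that this argument needs only $S$-freeness, not maximality.

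The genuinely hard assertion is that a maximal $S$-free convex set $K$ is a polyhedron, and this is where I expect the main obstacle. The strategy I would follow is first to record the standard consequences of maximality: $K$ is full-dimensional, and $K$ admits no enlargement, in the sense that pushing out any supporting hyperplane would place a point of $S$ in its interior. The crux is then to prove that $K$ has only finitely many facets (after which, in the Moreover clause, $0\in\intr(K)$ forces each facet inequality to normalize to $a_ix\le 1$). Here I would reduce to the pointed case by quotienting out the lineality space $L=\lin(K)$ and invoke a characterization of $\proj_{L^\perp}(\Z^n)$ — namely that this projected group is either discrete or dense according to the rationality of $L$, a dichotomy controlled by Dirichlet's simultaneous approximation theorem. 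In the quotient $K$ has a bounded base, and Minkowski's convex body theorem yields a uniform lower bound on the spacing of the points of $S$ that block enlargement, capping their number and hence the number of facets. The main difficulty is precisely this finiteness: excluding an infinite family of facets accumulating along $\bd(K)$, which is exactly what the discreteness/density dichotomy for lattice projections, together with Minkowski's theorem, is designed to rule out.
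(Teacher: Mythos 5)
Your treatment of the gauge formula is correct, and in fact slightly more self-contained than the paper's: the paper deduces $\dim(\rec(K))<n$ by invoking case (b) of its structure theorem for maximal lattice-free sets, whereas you obtain the needed inequality $\max_{i\in I}a_ir\ge 0$ directly from $S$-freeness via the covering radius of $\Z^n$ (a full-dimensional recession cone would put arbitrarily large balls inside $\intr(K)$, hence a point of $b+\Z^n$). That half of the argument stands on its own.

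The polyhedrality half, however, has genuine gaps. First, your opening claim that maximality forces $K$ to be full-dimensional is false: for $n\ge 2$ a hyperplane $a+L$ with $L$ not a lattice subspace is a maximal $(b+\Z^n)$-free convex set of dimension $n-1$. This case is recoverable (a hyperplane is a polyhedron), but it must be split off and handled separately, as the paper does. Second, and more seriously, you assert that after quotienting by the lineality space $L=\lin(K)$ the set has a bounded base. That holds only if $\rec(K)=\lin(K)$, and this equality is a nontrivial step your sketch omits entirely: a priori $K$ could have a pointed nonzero recession cone, in which case $\proj_{L^\perp}(K)$ is still unbounded and your finiteness count collapses. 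The paper proves $\rec(K)=\lin(K)$ with Minkowski's theorem (if $\intr(K-\rec(K))$ contained an integer point $z$, then $B(z,\epsilon)+\R_+v$ with $v\in\relint(\rec(K))$ contains integer points arbitrarily far out, which eventually lie in $\intr(K)$). A further fact you fold into your ``dichotomy'' without isolating it is that $\lin(K)$ must be a \emph{lattice} subspace (otherwise $K+\langle v\rangle$ remains $S$-free for a suitable $v$, contradicting maximality); only then is $\proj_{L^\perp}(\Z^n)$ an honest lattice, so that the bounded base meets finitely many of its points and each facet is pinned by one. Your ``discrete or dense according to rationality'' dichotomy is also too coarse: the closure of $\proj_{L^\perp}(\Z^n)$ is in general a union of discrete translates of a subspace (Proposition~\ref{prop:proj}), not simply a lattice or all of $L^\perp$. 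So the plan points in the right direction, but the two structural facts $\rec(K)=\lin(K)$ and ``$\lin(K)$ is a lattice subspace'' are the real content of the polyhedrality proof, and both are missing from your outline.
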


Since a set $K$ is a (maximal) $S$-free convex set if and only if $K-b$ is a (maximal)  $\Z^n$-free convex set, the proof of Theorem~\ref{TH-gaugePolyhedron} requires the characterization of maximal $\Z^n$-free convex sets and is postponed until the end of subsection~\ref{s:mlfc}.

\subsubsection{Maximal lattice-free convex sets}\label{s:mlfc}

We characterize the structure of maximal $\Z^n$-free sets in this section in Theorem~\ref{TH-maximal} and then derive Theorem~\ref{TH-gaugePolyhedron} as a consequence.

Our treatment uses basic facts about lattices. A \emph{lattice}  of dimension $t$  is a set of the type $\{x\in \R^n\st x=\lambda_1a_1+\dots+\lambda_ta_t;\,\lambda_1,\dots,\lambda_t\in \Z\}$, where $a_1,\dots,a_t$ are linearly independent vectors in $\R^n$. It follows from this definition that $\Z^n$ is a lattice. We call a $\Z^n$-free convex set \emph{lattice-free}.
We refer to Chapter VII in the book of Barvinok \cite{barvinok} for an introduction to lattice theory.

A convex set $C$ is centrally symmetric with center $p$ if $x \in C$ implies $2p-x \in C$. We will sometimes simply say $C$ is centrally symmetric, if there exists $p \in C$ such that $C$ is centrally symmetric with center $p$. 

\begin{theorem}[Minkowski's convex body theorem (see, e.g., \cite{barvinok})]\label{th:mink}
Let $C\subseteq\R^n$ be a centrally symmetric convex set with center $0$. If $\vol(C)>2^n$, then $C$ contains a nonzero integer point. Moreover, if $C$ is compact, the condition can be relaxed to $\vol(C)\ge2^n$.
\end{theorem}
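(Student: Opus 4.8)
The plan is to reduce the statement to a purely measure-theoretic pigeonhole principle (Blichfeldt's lemma) and then exploit central symmetry and convexity to promote the output of that lemma to an actual nonzero integer point of $C$. Concretely, I would first pass to the scaled set $\tfrac12 C$, which satisfies $\vol(\tfrac12 C) = 2^{-n}\vol(C) > 1$. The key claim I would isolate is: \emph{any measurable set $A\subseteq\R^n$ with $\vol(A)>1$ contains two distinct points whose difference lies in $\Z^n$.} Granting this, applying it to $A=\tfrac12 C$ produces distinct $p,q\in\tfrac12 C$ with $p-q\in\Z^n\setminus\{0\}$. Writing $p=\tfrac12 u$, $q=\tfrac12 v$ with $u,v\in C$, central symmetry (center $0$) gives $-v\in C$, and convexity gives $\tfrac12\bigl(u+(-v)\bigr)=\tfrac12(u-v)=p-q\in C$. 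Thus $p-q$ is the desired nonzero integer point of $C$.

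For the pigeonhole claim I would argue by tiling $\R^n$ with the integer translates of the half-open unit cube $Q=[0,1)^n$. Partition $A$ as $A=\bigcup_{z\in\Z^n}A_z$ where $A_z=A\cap(Q+z)$, and translate each piece back into $Q$ by setting $A_z-z\subseteq Q$. Since the sets $A_z$ are disjoint and translation preserves volume,
\[
\sum_{z\in\Z^n}\vol(A_z-z)=\sum_{z\in\Z^n}\vol(A_z)=\vol(A)>1=\vol(Q).
\]
Because all the translated pieces sit inside $Q$ yet their volumes sum to more than $\vol(Q)$, they cannot be pairwise disjoint; hence there exist $z_1\neq z_2$ and a common point, i.e.\ $x\in A_{z_1}$, $y\in A_{z_2}$ with $x-z_1=y-z_2$. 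Then $x-y=z_1-z_2\in\Z^n\setminus\{0\}$, and $x\neq y$, proving the claim.

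It remains to treat the boundary case where $C$ is compact with $\vol(C)\ge 2^n$. Here I would apply the strict version to the dilates $(1+\eps)C$: for every $\eps>0$ one has $\vol\bigl((1+\eps)C\bigr)=(1+\eps)^n\vol(C)>2^n$, so $(1+\eps)C$ contains a nonzero integer point $z_\eps$. For $0<\eps\le 1$ these points all lie in $2C$, a bounded set, so only finitely many integer vectors occur; therefore some fixed $z\in\Z^n\setminus\{0\}$ satisfies $z\in(1+\eps_k)C$ along a sequence $\eps_k\downarrow 0$. Equivalently $z/(1+\eps_k)\in C$, and since $z/(1+\eps_k)\to z$ while $C$ is closed, we conclude $z\in C$.

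The main obstacle is the pigeonhole claim itself: all the geometric content of the theorem is concentrated in the measure-theoretic overlap argument, and one must be careful that the half-open cube $Q$ genuinely tiles $\R^n$ (so the pieces $A_z$ are disjoint and exhaust $A$) and that ``volumes summing to more than $\vol(Q)$'' forces an overlap of positive-measure sets lying in $Q$. Once this is in hand, the conversion via central symmetry and convexity, and the compactness limiting argument, are entirely routine.
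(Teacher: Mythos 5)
Your proof is correct. Note, however, that the paper does not prove this statement at all: it quotes Minkowski's convex body theorem as a classical result with a pointer to Barvinok's book, so there is no in-paper argument to compare against. What you have written is the standard textbook proof --- Blichfeldt's pigeonhole lemma applied to $\tfrac12 C$, followed by the symmetry-and-convexity step $\tfrac12\bigl(u+(-v)\bigr)\in C$, and a compactness/limit argument for the boundary case $\vol(C)=2^n$ --- and all the details check out, including the measure-theoretic care with the half-open cube tiling and the observation that $0\in C$ (so that $(1+\eps)C\subseteq 2C$) in the final step.
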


A subspace $H\subseteq\R^n$ is  a lattice subspace if $\dim(H\cap \Z^n)=\dim(H)$.  That is, if $H$ can be generated by an integral basis. Equivalently, $H=\{x\in \R^n: Ax=0\}$ for some  $(n-\dim(H))\times n$ integral  matrix $A$ of full row-rank.

  Given a linear subspace $L\subseteq\R^n$, there exists a unique  minimal lattice subspace containing $L$. It is the intersection of all lattice subspaces containing $L$.

\begin{lemma}\label{lem:closed}
Let $H\subseteq\R^n$ be a lattice subspace. Then $H+\Z^n$ is a closed set.
\end{lemma}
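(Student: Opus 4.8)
The plan is to introduce a change of coordinates, induced by a $\Z$-basis of $\Z^n$ adapted to $H$, in which $H+\Z^n$ becomes the manifestly closed set $\R^t\times\Z^{n-t}$, where $t:=\dim(H)$. First I would set $\Lambda:=H\cap\Z^n$. By the definition of a lattice subspace we have $\dim(\Lambda)=\dim(H)=t$, so $\Lambda$ is a rank-$t$ lattice whose real span is all of $H$ (a $t$-dimensional subspace of $H$ must equal $H$). In particular $\Lambda=\vectorsp(\Lambda)\cap\Z^n=H\cap\Z^n$, which is exactly the statement that $\Lambda$ is a \emph{primitive} sublattice of $\Z^n$, i.e.\ that $\Z^n/\Lambda$ is torsion-free.

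The key lattice-theoretic step is then to extend a $\Z$-basis $v_1,\dots,v_t$ of $\Lambda$ to a $\Z$-basis $v_1,\dots,v_n$ of $\Z^n$. This is possible precisely because $\Lambda$ is primitive: since $\Z^n/\Lambda\cong\Z^{n-t}$ is free, a basis of $\Lambda$ lifts to a basis of $\Z^n$. This is standard and follows, for instance, from the Smith normal form (see \cite{barvinok}).

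With such a basis in hand the rest is bookkeeping. Since $v_1,\dots,v_t\in H$, the lattice vectors $\Z v_1+\cdots+\Z v_t$ lie in $H$ and are absorbed, giving $H+\Z^n=H+(\Z v_{t+1}+\cdots+\Z v_n)$. Let $T$ be the invertible linear map of $\R^n$ sending the standard basis vector $e_i$ to $v_i$. Then $T(\R^t\times\Z^{n-t})=\vectorsp(v_1,\dots,v_t)+(\Z v_{t+1}+\cdots+\Z v_n)=H+\Z^n$. Because $\R^t\times\Z^{n-t}$ is closed in $\R^n$ (a product of a closed and a discrete set) and $T$ is a homeomorphism, its image $H+\Z^n$ is closed.

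I expect the main obstacle to be the basis-extension step, namely correctly identifying $\Lambda$ as a primitive sublattice and invoking the completion-of-basis fact; everything after that is routine change-of-coordinates verification. An alternative route that avoids choosing a basis is to note that $H+\Z^n=\pi^{-1}(\pi(\Z^n))$ for the quotient map $\pi\colon\R^n\to\R^n/H$ and to argue that $\pi(\Z^n)$ is a discrete, hence closed, subgroup of $\R^n/H$; but proving that discreteness reduces to the same lattice fact, so the adapted-basis argument seems the most direct.
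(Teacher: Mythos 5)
Your proof is correct and follows essentially the same route as the paper, which also reduces to the case of a coordinate subspace via a unimodular transformation (you simply make explicit the basis-extension argument that justifies this reduction). No issues.
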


\begin{proof}
Since $H$ is a lattice subspace, by applying a suitable unimodular transformation we can assume that $H=\{x\in\R^n \colon x_1=\dots=x_k=0\}$ for some $k\in\{0,\dots,n\}$.
Then $H+\Z^n=\{x\in\R^n \colon x_1,\dots,x_k\in\Z\}$, which is a closed set.
\end{proof}

\begin{prop}\label{prop:proj}
Let $L\subseteq\R^n$ be a linear subspace and let $H$ be the minimal lattice subspace containing $L$.
Then
\[\cl(\proj_{L^\perp}(\Z^n))=(H+\Z^n)\cap L^\perp=(H\cap L^\perp)+\Lambda\]
for some lattice $\Lambda\subseteq L^\perp$ such that $\dim(H\cap L^\perp)+\dim(\Lambda)=\dim(L^\perp)$.
\end{prop}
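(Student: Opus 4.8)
The plan is to work throughout with the closed subgroup $G := \cl(\proj_{L^\perp}(\Z^n))$ of $L^\perp$ (it is a subgroup, being the closure of the homomorphic image $\proj_{L^\perp}(\Z^n)$ of $\Z^n$), and to exploit the fact that, since $\proj_{L^\perp}$ annihilates $L$, we have $\proj_{L^\perp}(\Z^n)=\proj_{L^\perp}(L+\Z^n)$. I would first establish the first equality, then read off the second equality and the dimension count from the structure of $G$ as a closed subgroup of a Euclidean space.

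For the first equality, the inclusion $G\subseteq (H+\Z^n)\cap L^\perp$ is the easy direction: for $z\in\Z^n$ we have $\proj_{L^\perp}(z)=z-\proj_L(z)$ with $\proj_L(z)\in L\subseteq H$, so $\proj_{L^\perp}(z)\in (H+\Z^n)\cap L^\perp$; since $H$ is a lattice subspace, Lemma~\ref{lem:closed} makes $H+\Z^n$, hence $(H+\Z^n)\cap L^\perp$, closed, so the inclusion survives passage to the closure. For the reverse inclusion I would use continuity of $\proj_{L^\perp}$ in the form $\proj_{L^\perp}(\cl(A))\subseteq\cl(\proj_{L^\perp}(A))$ with $A=L+\Z^n$, together with the elementary identity $\proj_{L^\perp}(H+\Z^n)=(H+\Z^n)\cap L^\perp$ (a one-line check using $\proj_{L^\perp}(H)\subseteq H$). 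This reduces everything to the density statement $\cl(L+\Z^n)=H+\Z^n$, which I treat next.

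That density statement is the heart of the matter and the step I expect to be the main obstacle. My approach is: the image $q(L)$ of $L$ under the quotient $q\colon\R^n\to\R^n/\Z^n$ is a connected subgroup of the torus, so its closure is a closed connected subgroup, hence a subtorus; its preimage under $q$ is $\cl(L+\Z^n)=V+\Z^n$ for a rational (lattice) subspace $V\supseteq L$. Minimality of $H$ then forces $V\supseteq H$, so $H\subseteq\cl(L+\Z^n)$; the opposite inclusion $\cl(L+\Z^n)\subseteq H+\Z^n$ is immediate since $H+\Z^n$ is closed. The genuinely number-theoretic content here is precisely the multidimensional Kronecker/Weyl phenomenon (equivalently, the converse of Lemma~\ref{lem:closed}: if $V+\Z^n$ is closed then $V$ is a lattice subspace), and this is exactly where the hypothesis that $H$ is the \emph{minimal} lattice subspace containing $L$ is used to pin down the subspace part of the closure.

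Finally, for the second equality and the dimension count I would invoke the structure theorem for closed subgroups of Euclidean space: $G=W\oplus\Lambda$ with $W$ a linear subspace, $\Lambda$ a lattice in a complementary subspace $U$, and $W\oplus U=L^\perp$ since $\proj_{L^\perp}(\Z^n)$ spans $L^\perp$. It then remains to identify $W=H\cap L^\perp$. The inclusion $W\supseteq H\cap L^\perp$ holds because $H\cap L^\perp$ is a subspace contained in $G=(H+\Z^n)\cap L^\perp$, hence contained in the maximal subspace $W$; the inclusion $W\subseteq H\cap L^\perp$ holds because $W\subseteq(H+\Z^n)\cap L^\perp\subseteq H+\Z^n$ is a subspace through the origin, and the connected component of $H+\Z^n$ through the origin is $H$ (visible after the unimodular change of coordinates bringing $H$ to a coordinate subspace, as in the proof of Lemma~\ref{lem:closed}), so $W\subseteq H$. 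With $W=H\cap L^\perp$ we obtain $G=(H\cap L^\perp)+\Lambda$ and $\dim(H\cap L^\perp)+\dim(\Lambda)=\dim(W)+\dim(U)=\dim(L^\perp)$, as required.
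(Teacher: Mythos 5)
Your proof is correct, but it reaches the crucial density statement by a genuinely different route than the paper. The paper proves the hard inclusion $(H+\Z^n)\cap L^\perp\subseteq\cl(\proj_{L^\perp}(\Z^n))$ directly, first in the case $H=\R^n$ by reverse induction on $\dim(L)$: Minkowski's convex body theorem (Theorem~\ref{th:mink}) applied to a long thin cylinder produces a nonzero projected integer point $w$ with $\|w\|\le\eps/2$, a second cylinder argument shows that a ball $B(x,\eps)$ missing $\proj_{L^\perp}(\Z^n)$ would force an entire unbounded cylinder in the direction $w$ to miss it, and projecting that cylinder onto $\langle L\cup\{w\}\rangle^\perp$ contradicts the inductive hypothesis; general $H$ is then handled by a unimodular change of coordinates and a union over $\Z^n$-translates. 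You instead pass to the torus $\R^n/\Z^n$ and quote that the closure of the connected subgroup $q(L)$ is a subtorus, hence corresponds to a rational subspace $V$, so that $\cl(L+\Z^n)=V+\Z^n$ and minimality of $H$ gives $V\supseteq H$. Both are legitimate, and you correctly locate the number-theoretic heart of the matter (your ``Kronecker--Weyl phenomenon'' is exactly the converse of Lemma~\ref{lem:closed}, which is what the Minkowski argument proves by hand); the trade-off is that the paper is self-contained modulo one classical theorem it states explicitly, whereas you outsource the work to the structure theory of closed subgroups of $\R^m$ and of tori, machinery the survey never introduces. Your handling of the second equality also differs: the paper exhibits the concrete lattice $\Lambda=\proj_{H^\perp}(\Z^n)$ and splits $(H+\Z^n)\cap L^\perp$ via the projections onto $H$ and $H^\perp$, while you invoke the decomposition $G=W\oplus\Lambda$ of a closed subgroup and identify $W=H\cap L^\perp$ as the identity component; your identification of $W$ (via the connected component of $H+\Z^n$ through the origin) and your dimension count (via $\proj_{L^\perp}(\Z^n)$ spanning $L^\perp$) are both sound, and arguably cleaner than the paper's explicit computation.
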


\begin{proof}
Note that $\proj_{L^\perp}(\Z^n)=(L+\Z^n)\cap L^\perp$. By Lemma~\ref{lem:closed}, $(H+\Z^n)\cap L^\perp$ is a closed set containing $(L+\Z^n)\cap L^\perp$. It follows that $\cl(\proj_{L^\perp}(\Z^n))=\cl((L+\Z^n)\cap L^\perp)\subseteq(H+\Z^n)\cap L^\perp$.

To show the reverse inclusion, we first assume $H=\R^n$. In this case, $(H+\Z^n)\cap L^\perp=L^\perp$, thus we have to prove that for every $x\in L^\perp$ and $\epsilon>0$, the ball $B(x,\epsilon)$ intersects $\proj_{L^\perp}(\Z^n)$.

The proof is by (reverse) induction on $k:=\dim(L)$. The case $k=n$ is trivial. Now assume $k<n$.
Fix $x\in L^\perp$ and $\epsilon>0$ (with $\epsilon<1$ without loss of generality), and assume by contradiction that no point in $\proj_{L^\perp}(\Z^n)$ belongs to $B(x,\epsilon)$.
We claim that $\proj_{L^\perp}(\Z^n)$ contains a nonzero vector $w$ such that $\|w\|\le\epsilon/2$.
To see this, define $\Lambda'=L\cap\Z^n$. Since $L$ is not a lattice subspace (as $L\subsetneq H$), $\Lambda'$ is a lattice of dimension smaller than $k$.
Then there exists a nonzero vector $v\in L\cap \langle\Lambda'\rangle^\perp$.  Define $B'=B(0,\epsilon/2)\cap \langle v\rangle^\perp$.
Let $C$ be the centrally symmetric cylinder $C=B'+[-\lambda v,\lambda v]$, where $\lambda>0$. For $\lambda$ large enough, $\vol(C)>2^n$, thus, by Minkowski's convex body theorem, $C$ contains a nonzero integer point $z$. Note that $z\notin L$: otherwise, we would have $z\in C\cap L\cap\Z^n=C\cap \Lambda'\subseteq B'$; but $B'$ contains no integer point other than the origin, as $\epsilon<1$. Therefore $z\notin L$, which implies that its projection $w$ onto $L^\perp$ is not the origin. Note that $\|w\|\le\epsilon/2$, as claimed.

We now claim that the unbounded cylinder $C'=B(x,\epsilon/2)+\langle w\rangle$ contains no point from $\proj_{L^\perp}(\Z^n)$.
Assume to the contrary that there exists $y\in\proj_{L^\perp}(\Z^n)$ such that $y=x'+\mu w$, where $x'\in B(x,\epsilon/2)$ and $\mu\in\R$.
Then the point $y'=y-\floor{\mu}w=x'+(\mu-\floor\mu)w$ would also belong to $\proj_{L^\perp}(\Z^n)$; however
\[\|y'-x\|=\|y'-x'\|+\|x'-x\|\le\|w\|+\epsilon/2\le\epsilon,\]
thus $y'\in B(x,\epsilon)$, which is a contradiction, as $B(x,\epsilon)$ contains no point from $\proj_{L^\perp}(\Z^n)$.
Therefore $C'$ contains no point from $\proj_{L^\perp}(\Z^n)$, as claimed.

Now, if $k=n-1$ (i.e., $L^\perp$ is a line), by choosing $\epsilon$ arbitrarily small the norm of $w$ can be made arbitrarily small, and we conclude that $\cl(\proj_{L^\perp}(\Z^n))=L^\perp$.
So we assume $k<n-1$.
Define $L'=\langle L \cup \{w\}\rangle$. Since the minimal lattice subspace containing $L'$ is $\R^n$, by induction $\cl(\proj_{(L')^\perp}(\Z^n))=(L')^\perp$. However, the projection of $C'$ onto $(L')^\perp$ is a ball in $(L')^\perp$ that contains no point from $\proj_{(L')^\perp}(\Z^n)$, a contradiction.

This concludes the proof for the case $H=\R^n$. If $H$ is subspace of dimension $n'<n$, modulo a unimodular transformation we can assume that $H=\R^{n'}\times\{0\}^{n-n'}$. We can apply the result with respect to the ambient space $H$, which is equivalent to $\R^{n'}$. We then have $\cl(\proj_{L^\perp}(H\cap\Z^n)=H\cap L^\perp$. Similarly, for every $a\in\Z^n$ we have $\cl(\proj_{L^\perp}((H+a)\cap\Z^n))=(H+a)\cap L^\perp$. Then
$$\cl(\proj_{L^\perp}(\Z^n))\supseteq\bigcup_{a\in\Z^n}\cl(\proj_{L^\perp}((H+a)\cap\Z^n))=\bigcup_{a\in\Z^n}(H+a)\cap L^\perp=(H+\Z^n)\cap L^\perp.$$

It remains to show that $(H+\Z^n)\cap L^\perp=(H\cap L^\perp)+\Lambda$ for some lattice $\Lambda\subseteq L^\perp$ such that $\dim(H\cap L^\perp)+\dim(\Lambda)=\dim(L^\perp)$. Define $\Lambda=\proj_{H^\perp}(\Z^n)$. Since $H^\perp$ is a lattice subspace, $\Lambda$ is a lattice.
Moreover, since $L\subseteq H$, we have that $\Lambda\subseteq H^\perp\subseteq L^\perp$. By writing
\[(H+\Z^n)\cap L^\perp=\proj_H((H+\Z^n)\cap L^\perp)+\proj_{H^\perp}((H+\Z^n)\cap L^\perp),\]
and observing that $\proj_H((H+\Z^n)\cap L^\perp)=(H\cap L^\perp)$ and $\proj_{H^\perp}((H+\Z^n)\cap L^\perp)=\Lambda$, one concludes that $(H+\Z^n)\cap L^\perp=(H\cap L^\perp)+\Lambda$.
\end{proof}

We illustrate the above proposition in the case $n=3$ with $L$ being a line. If $H=L$, then $\cl(\proj_{L^\perp}(\Z^n))=\proj_{L^\perp}(\Z^n)$ is a lattice; if $\dim(H)=2$, then $\cl(\proj_{L^\perp}(\Z^n))$ is the union of discrete shifts of $H\cap L^\perp$; finally if $H=\R^3$,  $\cl(\proj_{L^\perp}(\Z^n))=L^\perp$. See Figure~\ref{fig:proj}.

\begin{figure}
\begin{center}
\begin{pspicture}(15,6.5)(0,1.5)
\pspolygon[linestyle=none,fillcolor=lightgray,fillstyle=solid](10,3)(13.5,3)(15,4.5)(11.5,4.5)
\multiput(0,0)(5,0){3}{
\pspolygon(0,3)(3.5,3)(5,4.5)(1.5,4.5)
\put(2.5,5.5){$L$}
\put(0.05,3.5){$L^\perp$}
}
\multiput(0,0)(0.5,0.5){3}{\multiput(0.6,3.25)(0.65,0){5}{\color{black!70!white}\circle*{.1}}}
\multiput(0,0)(5,0){3}{
\psline(2.4,3.75)(2.4,6)
\psline[linestyle=dotted](2.4,3.75)(2.4,3)
\psline(2.4,3)(2.4,2)
}
\psline(9,3.5)(9,2.25)(6.05,1.58)(6.05,5.8)(9,6.43)(9,4.5)
\psline[linestyle=dotted](9,3.5)(9,4.5)
\psline(7.4,3.75)(7.4,3.68)
\psline[linecolor=black!70!white](6.05,3)(9,4.5)
\psline[linecolor=black!70!white](7.15,3)(9.9,4.4)
\psline[linecolor=black!70!white](8.25,3)(8.72,3.215)
\psline[linecolor=black!70!white](5.07,3.06)(6.05,3.55)
\psline[linecolor=black!70!white,linestyle=dashed](6.15,3.6)(7.9,4.5)
\psline[linecolor=black!70!white,linestyle=dashed](6.2,4.2)(6.8,4.5)
\put(6.2,5.3){$H$}
\rput{10}(11.9,5.9){\psellipse[linewidth=.4pt](0.5,0)(.5,.2)}
\rput{10}(11.9,1.9){\psellipticarc[linewidth=.4pt](0.5,0)(.5,.2){175}{0}}
\rput{10}(11.9,1.9){\psellipticarc[linestyle=dotted](0.5,0)(.5,.2){0}{175}}
\rput{10}(11.9,3.65){\psellipticarc[linewidth=.4pt](0.5,0)(.5,.2){175}{0}}
\rput{10}(11.9,3.65){\psellipticarc[linestyle=dotted](0.5,0)(.5,.2){0}{175}}
\psline[linewidth=.4pt](12.88,6.07)(12.88,2.07)
\psline[linewidth=.4pt](11.905,5.92)(11.905,1.92)
\end{pspicture}
\end{center}
\caption{Illustration of Proposition~\ref{prop:proj} for $n=3$ with $L$ being a line. In the first picture, $H=L$ and the orthogonal projection of $\Z^3$ onto $L^\perp$ is a 2-dimensional lattice. In the second picture, $H$ is a plane, and the set $\cl(\proj_{L^\perp}(\Z^3))$ is the union of discrete shifts of the line $H\cap L^\perp$. In the third picture, $H=\R^3$; the cylinder indicates that there are integer points in any neighborhood of $L$, and the projection of $\Z^3$ is dense in $L^\perp$ (i.e., its closure is $L^\perp$).}
\label{fig:proj}
\end{figure}
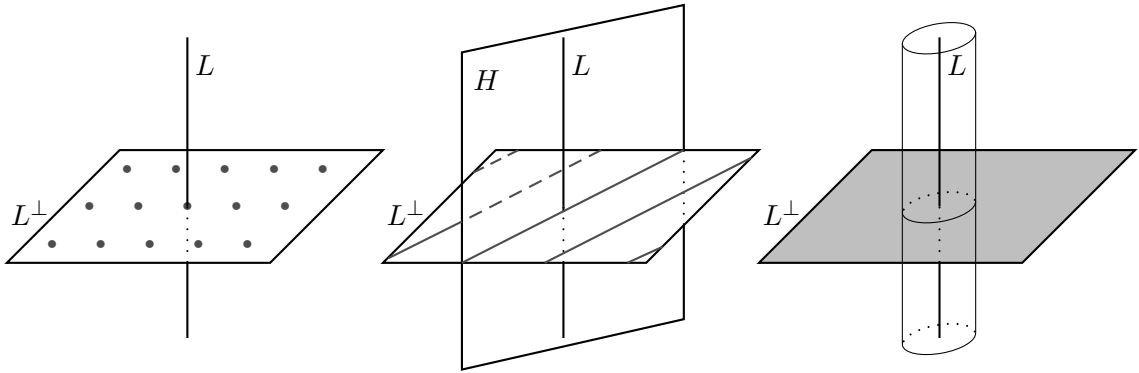

\begin{theorem}\label{TH-maximal}
A set $K\subseteq\R^n$ is a maximal lattice-free convex set if and only if it satisfies one of the following conditions:
\begin{enumerate}[\upshape(a)]
\item
$K=a+L$, where $a\in\R^n$ and $L$ is a subspace of dimension $n-1$ that is not a lattice subspace.
\item
$K$ is an $n$-dimensional polyhedron of the form $K=Q+L$, where $L$ is a lattice subspace of dimension $r$, with $r<n$, $Q$ is a polytope of dimension $n-r$, and the relative interior of every facet of $K$ contains an integer point.
\end{enumerate}
\end{theorem}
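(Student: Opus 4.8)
The plan is to prove both implications separately and, within the harder ``only if'' direction, to split on whether $K$ is full-dimensional. For sufficiency, a type-(a) set $a+L$ has empty interior, hence is lattice-free; its maximality follows because $L$ is not a lattice subspace, so its minimal lattice superspace is $\R^n$ and Proposition~\ref{prop:proj} gives $\cl(\proj_{L^\perp}(\Z^n))=L^\perp$. Thus any convex set properly containing $a+L$ is full-dimensional, its interior contains an open slab about $a+L$, and density of $\proj_{L^\perp}(\Z^n)$ in the line $L^\perp$ places an integer point in that slab. A type-(b) set is lattice-free --- equivalently, in unimodular coordinates where $L=\R^r\times\{0\}$ and $\proj_{L^\perp}(\Z^n)=\Z^{n-r}$, the polytope $Q$ has no lattice point in its relative interior --- and it is maximal because if $K'\supsetneq K$ then some $p\in K'$ strictly violates a facet inequality and the lattice point in the relative interior of that facet lies in $\intr(\conv(K\cup\{p\}))$.

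For necessity, I first dispose of the case $\dim(K)\le n-1$. Then $\intr(K)=\emptyset$, so $\aff(K)$ is lattice-free and maximality gives $K=\aff(K)=a+L$. Proposition~\ref{prop:proj} then shows such a $K$ is maximal only in case (a): a proper subspace of dimension $\le n-2$, or a lattice-subspace hyperplane, can always be enlarged to a strictly larger lattice-free set (the former by extending to a lattice-free hyperplane, the latter by fattening into a full-dimensional slab of type (b)), so maximality forces $\dim(L)=n-1$ with $L$ not a lattice subspace.

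The full-dimensional case is the heart. Write $L=\lin(K)$. The crucial step, and the main use of Proposition~\ref{prop:proj}, is that $L$ must be a lattice subspace: otherwise, with $H$ its minimal lattice superspace and $M=H\cap L^\perp\neq\{0\}$, the closed set $\cl(\proj_{L^\perp}(\Z^n))=M+\Lambda$ is invariant under translation by $M$, so $K+M$ is again lattice-free but strictly larger than $K$ (since $M\cap L=\{0\}$), contradicting maximality. Once $L$ is a lattice subspace, a unimodular transformation makes $\proj_{L^\perp}(\Z^n)$ a genuine lattice and $\bar K:=\proj_{L^\perp}(K)$ a full-dimensional, line-free, maximal lattice-free set in $L^\perp$, with maximality and line-freeness transferred routinely through the cylinder $K=\bar K+L$. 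Lifting the eventual description of $\bar K$ by $+L$ then yields $K=Q+L$ as in (b), with $Q=\bar K$ a polytope of dimension $n-r$.

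I expect the main obstacle to be the remaining claim: that a pointed (line-free) maximal lattice-free set $\bar K$ is a \emph{polytope} each of whose facets contains a lattice point in its relative interior. Boundedness comes from showing that a line-free maximal lattice-free set has trivial recession cone; the finiteness of the facets is where Minkowski's convex body theorem (Theorem~\ref{th:mink}) does the real work, bounding the lattice points that can support $\bar K$; and maximality then forces a lattice point into the relative interior of each facet, since a facet whose relative interior missed the lattice could be translated outward while preserving lattice-freeness. By contrast, the reduction steps and the interior/relative-interior bookkeeping are routine, and the identification of $\lin(K)$ as a lattice subspace, though conceptually central, is clean given Proposition~\ref{prop:proj}.
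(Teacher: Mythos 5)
Your plan follows essentially the same route as the paper's proof: Proposition~\ref{prop:proj} drives both the sufficiency of (a) and the claim that $\lin(K)$ is a lattice subspace, the facet--relative-interior argument gives sufficiency and necessity of the integer points in (b), the lower-dimensional case reduces to a non-lattice hyperplane, and the full-dimensional case reduces to a bounded maximal $\Lambda$-free polytope in $L^\perp$. The reordering (establishing that $\lin(K)$ is a lattice subspace before dealing with the recession cone) is harmless.

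The one substantive weak point is where you place Minkowski's theorem. Once $\bar K$ is known to be bounded, finiteness of its facets needs no Minkowski at all: enclose $\bar K$ in a box $B$, observe that $B\cap\Lambda$ is finite, take one supporting half-space per lattice point, and maximality forces $\bar K$ to equal the intersection of $B$ with these finitely many half-spaces. Where Minkowski genuinely does the work (besides inside Proposition~\ref{prop:proj}) is in the step you assert without argument, namely that the pointed set $\bar K$ has trivial recession cone (equivalently, $\rec(K)=\lin(K)$). The paper's argument: if $\rec(K)\ne\lin(K)$, then $K-\rec(K)$ is still lattice-free --- an integer point $z$ with $B(z,\eps)\subseteq\intr(K-\rec(K))$, together with $v\in\relint(\rec(K))$, would by Minkowski applied to long thin centrally symmetric cylinders along $v$ yield integer points in $B(z,\eps)+\R_+v$ arbitrarily far out, and such points eventually lie in $\intr(K)$ --- so maximality gives $K=K-\rec(K)$, forcing $\rec(K)$ to be a subspace. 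You should supply this (or an equivalent) argument; as written, your sketch treats as routine the one step, other than Proposition~\ref{prop:proj} itself, where the convex body theorem actually enters.
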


\begin{proof}
We first prove the ``if'' direction. Assume that (a) holds; we show that $K$ is a maximal lattice-free convex set.
Since the minimal lattice subspace containing $L$ is $\R^n$, Proposition~\ref{prop:proj} implies $\cl(\proj_{L^\perp}(\Z^n))=L^\perp$.
Then for every $\epsilon>0$ and $a\in\R^n$, there is a point in $\Z^n\setminus(a+L)$ at distance at most $\epsilon$ from $a+L$.
Suppose that $K'$ is a closed convex set that strictly contains $K$.
Since $K'$ is closed, it must contains a set of the form $[a,a+v]+L$ for some $v\in L^\perp\setminus\{0\}$.
Then $\intr(K')$ contains an integer point, a contradiction to the assumption that $K'$ is lattice-free.
It follows that no lattice-free convex set strictly contains $K$ and thus $K$ is a maximal lattice-free convex set.\smallskip

Let now $K$ satisfy (b); we prove that $K$ is a maximal lattice-free convex set.
If $K'$ is a lattice-free convex set strictly containing $K$, then there is a facet $F$ of $K$ such that $\relint(F)\subseteq\intr(K')$.
Since $\relint(F)$ contains an integer point, this point is in $\intr(K')$, a contradiction to the assumption that $K'$ is lattice-free.
It follows that $K$ is a maximal lattice-free convex set.
\smallskip

We now prove the ``only if'' direction. We first assume that $K$ is a maximal lattice-free convex set with $\dim(K)<n$; we show that (a) holds.
By maximality, $K$ is a hyperplane, hence $K=a+L$ for some $a\in\R^n$ and some linear subspace $L$ of dimension $n-1$.
If $L$ is a lattice subspace, then $L=\{x\in\R^n\colon cx=0\}$ for some $c\in\Z^n$.
Define $\alpha=ca$ and $K'=\{x\in\R^n\colon\floor\alpha\le cx \le\floor\alpha +1\}$.
Then $K'$ is a lattice-free convex set that strictly contains $K$, a contradiction to the maximality of $K$.
It follows that $L$ is not a lattice-subspace and (a) holds.
\smallskip

We finally show that if $K$ is an $n$-dimensional lattice-free convex set then (b) is satisfied.\smallskip

\noindent
{\sc Claim.} $\lin(K)=\rec(K)$.
\begin{cpf}
We assume $\rec(K)\ne\{0\}$, otherwise the statement holds trivially.
Define $K'=K-\rec(K)$ and assume that there is an integer point $z\in\intr(K')$.
Choose $\epsilon>0$ such that $B(z,\epsilon)\subseteq\intr(K')$ and let $v\in\relint(\rec(K))$.
Then by Theorem~\ref{th:mink} the set $B(z,\epsilon)+\R_+v$ contains integer points arbitrarily far from $z$.
On the other hand, by the choice of $v$, every point of the form $x+\lambda v$ with $x\in B(z,\epsilon)$ and $\lambda$ large enough belongs to $\intr(K)$.
This contradicts the fact that $K$ is lattice-free.
Therefore $\intr(K')$ contains no integer point. Since $K$ is maximally lattice-free and $K\subseteq K'$, it follows that $K=K'$, i.e., $\lin(K)=\rec(K)$.
\end{cpf}

\noindent
{\sc Claim.} $\lin(K)$ is a lattice subspace.
\begin{cpf}
Define $L=\lin(K)$ and let $H$ be the minimal lattice subspace containing $L$.
If $L$ is not a lattice subspace, then $L\subsetneq H$, and therefore there exists a nonzero vector $v\in H\cap L^\perp$.
Define $K'=K+\langle v\rangle$ and assume that $\intr(K')$ contains an integer point $z$. Then $z=x+\lambda v$ for some $x\in\intr(K)$ and $\lambda\in\R$.
Since $x+L\subseteq H+\Z^n$, by Proposition~\ref{prop:proj} we have $(x+L)\cap L^\perp\subseteq\cl(\proj_{L^\perp}(\Z^n))$.
This implies that there are integer points that are arbitrarily close to $x+L$. Since $x+L\subseteq\intr(K)$, this contradicts the fact that $K$ is lattice-free.
We conclude that $K'$ is lattice-free, which is a contradiction to the maximality of $K$.
\end{cpf}

By the claims, $K=Q+L$ where $L$ is a lattice subspace and $Q=K\cap L^\perp=\proj_{L^\perp}(K)$.
Note that $Q$ is a bounded set, $0\le\dim(L)\le n-1$ and $\dim(Q)=n-\dim (L)$.

It remains to prove that $Q$ is a polytope and the relative interior of every facet of $K$ contains an integer point.  Since $L$ is a lattice subspace, by Proposition \ref{prop:proj},  $\proj_{L^\perp}(\Z^n)$ is a lattice $\Lambda$. Since $K$ is a maximal lattice-free convex set  and $K=Q+L$, then  $\intr(Q)\cap \Lambda=\emptyset$ and $Q$ is a maximal $\Lambda$-free convex set (in the space $L^\perp$).  Since $Q\subseteq L^\perp$ is a bounded set, then $Q\subseteq B\subseteq L^\perp$, where $B$ is a box. Let $B\cap \Lambda=\{z_1,\dots, z_k\}$. Since $\intr(Q)\cap \Lambda=\emptyset$, for every $z_i$ there exists an half-space $H_i$ containing $Q$ and having $z_i$ on the boundary. Then $Q\subseteq B\cap H_1,\dots\cap H_k$, and in fact $Q= B\cap H_1,\dots\cap H_k$ by maximality of $Q$.  This shows that $Q$ is a polytope.

Assume that a facet $F$ of $Q$ is such that $\relint(F)\cap \Lambda=\emptyset$. (Equivalently, the relative interior of the facet $F+ L$ of $K$ does not contain an integer point.) Since $F$ is a polytope and $\Lambda$ is a lattice, every point in $\Lambda$ is at distance at least $\varepsilon$ from $\relint(F)$, for some $\varepsilon>0$. Let $Q'$ be obtained from $Q$ by relaxing $F$ by $\varepsilon$. By construction,  $\relint(Q')\cap \Lambda=\emptyset$ and $Q'\supsetneq Q$. Let $K'=Q'+ L$. Then $K'$ is a lattice-free convex set and $K'\supsetneq K$, a contradiction to the maximality of $K$.
\end{proof}

We remark that the above theorem also holds for maximal $(b+\Z^n)$-free convex sets, except that the last part of condition (b) becomes: ``the relative interior of every facet of $K$ contains a point in $b+\Z^n$''.

The original proof of Theorem~\ref{TH-maximal} of Basu et al.\@ \cite{bccz} uses the Dirichlet approximation theorem. The subsequent (short) proof of Averkov  \cite{MR3027668}  uses the convex body theorem of Minkowski (Theorem~\ref{th:mink}) which implies the theorem of Dirichlet.

As shown in Theorem~\ref{TH-maximal}, maximal lattice-free convex sets are polyhedra. The following result bounds the number of facets of these polyhedra.

\begin{theorem} [Doignon~\cite{Doignon1973}, Bell~\cite{bell1977theorem}, Scarf~\cite{scarf1977observation}]\label{thm:DBS}
Let $K\subseteq \R^n$ be a maximal lattice-free convex set. Then $K$ is a polyhedron with at most $2^n$ facets.
\end{theorem}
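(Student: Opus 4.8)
The plan is to build directly on Theorem~\ref{TH-maximal}, which already tells us that $K$ is a polyhedron, so the only remaining task is to bound the number of facets by $2^n$. First I would dispatch case (a) of Theorem~\ref{TH-maximal}: there $K=a+L$ is a hyperplane, an affine subspace of dimension $n-1$, which has no proper faces and hence no facets; since $0\le 2^n$, the bound holds trivially. Thus I may assume $K$ is of type (b), so in particular $K$ is full-dimensional ($\dim(K)=n$), and by the last part of condition (b) the relative interior of every facet of $K$ contains an integer point.

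Next I would fix notation for the facet description. Write the irredundant representation $K=\{x\in\R^n\st a_ix\le b_i,\ i=1,\dots,m\}$, where $m$ is the number of facets and $F_i=\{x\in K\st a_ix=b_i\}$ is the $i$-th facet. Because $K$ is full-dimensional, $\intr(K)=\{x\in\R^n\st a_ix<b_i \text{ for all } i\}$, and $\relint(F_i)$ is exactly the set of points with $a_ix=b_i$ and $a_kx<b_k$ for all $k\ne i$. Using condition (b), for each facet I would pick an integer point $z_i\in\relint(F_i)\cap\Z^n$, so that $a_iz_i=b_i$ while $a_kz_i<b_k$ for every $k\ne i$.

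The heart of the argument is a parity pigeonhole (this is the classical Doignon--Bell--Scarf idea). Suppose for contradiction that $m>2^n$. Assigning to each $z_i$ its parity vector $z_i\bmod 2\in\{0,1\}^n$ and noting there are only $2^n$ such vectors, two indices $i\ne j$ must satisfy $z_i\equiv z_j\pmod 2$. Then $w:=\tfrac12(z_i+z_j)$ is an integer point, and I claim $w\in\intr(K)$, i.e.\ $a_kw<b_k$ for every $k$. For $k=i$ this holds because $a_iz_i=b_i$ and $a_iz_j<b_i$ (as $z_j\in\relint(F_j)$ with $j\ne i$), so $a_iw=\tfrac12(b_i+a_iz_j)<b_i$; the case $k=j$ is symmetric; and for $k\notin\{i,j\}$ both $a_kz_i<b_k$ and $a_kz_j<b_k$ give $a_kw<b_k$. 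Hence the integer point $w$ lies in $\intr(K)$, contradicting that $K$ is lattice-free. Therefore $m\le 2^n$.

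The argument is short precisely because Theorem~\ref{TH-maximal} has done the heavy lifting; the only point requiring genuine care is the observation that $K$ is full-dimensional in case (b), so that strict satisfaction of all facet inequalities really does characterize $\intr(K)$ — this is what makes the midpoint $w$ land in the interior rather than merely in $K$. One should also note that $z_i\ne z_j$ is automatic, since the relative interiors of distinct facets are disjoint, so the two tight constraints we exploit are genuinely different. If one preferred to avoid invoking full-dimensionality, one could instead use the decomposition $K=Q+L$ of (b) and apply the pigeonhole to the full-rank lattice $\Lambda=\proj_{L^\perp}(\Z^n)$ in $L^\perp$, replacing $2^n$ by the index $[\Lambda:2\Lambda]=2^{\dim(L^\perp)}\le 2^n$; but working directly with $\Z^n$ in $\R^n$ seems cleanest.
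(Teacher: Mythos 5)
Your proof is correct and takes essentially the same route as the paper: invoke Theorem~\ref{TH-maximal} to place an integer point in the relative interior of each facet, then apply the parity pigeonhole to produce an integer midpoint in $\intr(K)$ if there were more than $2^n$ facets. The extra care you take (dispatching the lower-dimensional case (a), and verifying strict satisfaction of every inequality at the midpoint) merely fills in details the paper leaves implicit.
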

\begin{proof} By Theorem \ref{TH-maximal}, every facet of $K$ contains a point in $\Z^n$ in its relative interior. If $K$ has more than $2^n$ facets, then $K$ has two facets whose relative interiors contain points say $z_1,\;z_2\in \Z^n$ that are congruent modulo 2, i.e., $z_1 - z_2$ has even components. But then $z=\frac{1}{2}z_1+\frac{1}{2}z_2$ is an integral point in $\intr(K)$, a contradiction.
\end{proof}

We can now prove the correctness of formula \eqref{eq-gauge-poly} to compute the gauge $\psi$ of $K$.\medskip

\noindent \emph{Proof of Theorem \ref{TH-gaugePolyhedron}.}
Recall that $K=\{x\in\R^n\colon a_ix\le1,\,i\in I\}$ is a maximal ($b+\Z^n$)-free convex set with $0\in\intr(K)$. Then $\dim(K)=n$ and $K$ satisfies (b) of Theorem \ref{TH-maximal}, hence $\dim(\rec(K))<n$. Since $\rec(K)=\{x\in \R^n\st a_ix\le0,\,i\in I\}$ and $0\in K$, then $\rec(K)\subseteq K$.

Fix $r\in\R^n$ and let $k$ be an index in $I$ such that $a_kr=\max_{i\in I}a_ir$.
Assume that $a_kr<0$. Then there exists $\epsilon>0$ such that $a_kr'<0$ for every $r'\in B(r,\epsilon)$, and therefore $\dim(\rec(K))=n$, a contradiction.

It follows that $a_kr\ge0$. If $a_kr=0$ then $r$ belongs to $\rec(K)$. In this case $\psi(r)=0$ and \eqref{eq-gauge-poly} holds.
So we assume $a_kr>0$. Define $t=a_kr$. Then, for $i\in I$, $a_i(r/t)\le 1$, and thus $\psi(r)\le t$.
On the other hand, if $t'<t$ then $a_k(r/t')>1$. This proves that $\psi(r)=t$. \qed

\subsection{General sets $S$}\label{sec:general-S}

In this section we show how the theory developed in Section~\ref{sec:Zn+b} for the case $S=b+\Z^n$ extends to more general closed sets $S\subseteq \R^n\setminus \{0\}$.

\subsubsection{Minimal valid functions}

Conforti, Cornu\'ejols, Daniilidis, Lemar\'echal and Malick \cite{conforti2013cut} studied the link between minimal valid functions and $S$-free convex sets, independently from the structure of maximal $S$-free convex sets. We summarize some of their results here.

\begin{theorem}\label{th-minimal-sublinear}  Given a closed set $S\subseteq \R^n\setminus \{0\}$, let $\psi\colon\R^n\to \R$ be a  valid function for $S$, and let $\psi'$ be defined as
$$\psi'(\hat r)=\inf \left\{\sum \psi(r)s_{r}\st  \sum r s_{r}=\hat r,s_{r}\ge 0\right\} \mbox{ for every $\hat r\in\R^n$}.$$
Then $\psi'$ is a valid function $\R^n\to \R$ which is sublinear.
\end{theorem}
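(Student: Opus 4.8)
The plan is to verify the three assertions about $\psi'$—that it is real-valued, sublinear, and valid—directly from its definition as an infimum over finite conic representations, the only delicate point being that this infimum is never $-\infty$.

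\emph{Sublinearity.} Positive homogeneity and subadditivity follow by manipulating representations. If $\hat r=\sum r s_r$ then $\lambda\hat r=\sum r(\lambda s_r)$ for $\lambda>0$ and the objective scales by $\lambda$; dividing coefficients by $\lambda$ inverts this, so representations of $\hat r$ and of $\lambda\hat r$ correspond bijectively and $\psi'(\lambda\hat r)=\lambda\psi'(\hat r)$. For subadditivity, given representations $\hat r^1=\sum r s_r^1$ and $\hat r^2=\sum r s_r^2$, their coefficient-wise sum is a representation of $\hat r^1+\hat r^2$ whose objective is the sum of the two objectives; taking the infimum over both representations yields $\psi'(\hat r^1+\hat r^2)\le\psi'(\hat r^1)+\psi'(\hat r^2)$.

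\emph{Finiteness (the main obstacle).} These manipulations are only meaningful once $\psi'>-\infty$, and this is where validity of $\psi$ enters (using that $S\neq\emptyset$, which we may assume). First I would show $\psi'(0)=0$: the trivial representation gives $\psi'(0)\le 0$, while if some conic representation $0=\sum v\,w_v$ had negative value $c=\sum\psi(v)w_v<0$, then fixing $\sigma\in S$ and forming the matrix with columns $\sigma$ and the $v$'s with coefficients $1$ and $Mw_v$ produces a point of $C_S(R)$, since $\sigma+M\cdot 0=\sigma\in S$, with objective $\psi(\sigma)+Mc\to-\infty$ as $M\to\infty$, contradicting validity. Hence every conic representation of $0$ has nonnegative value and $\psi'(0)=0$. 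Now combining an arbitrary representation of $\hat r$ of value $V$ with the trivial representation $-\hat r=(-\hat r)\cdot 1$ gives a representation of $0$ of value $V+\psi(-\hat r)\ge\psi'(0)=0$, so $V\ge-\psi(-\hat r)$; taking the infimum over $V$ gives $-\psi(-\hat r)\le\psi'(\hat r)$. With $\psi'(\hat r)\le\psi(\hat r)$ from the trivial representation, this shows $\psi'\colon\R^n\to\R$ is real-valued. I expect this step to be the crux: without the nonnegativity of conic representations of $0$, the construction could collapse to $-\infty$.

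\emph{Validity.} Finally, given $R$ with columns $r^1,\dots,r^k$ and $s\in C_S(R)$, fix $\epsilon>0$ and choose for each $j$ a representation $r^j=\sum_v v\,w_v^j$ with $\sum_v\psi(v)w_v^j\le\psi'(r^j)+\epsilon$, possible since $\psi'$ is finite. Assemble the matrix $R'$ whose columns are all the vectors $v$ appearing, with coefficients $s'_v:=\sum_j s_j w_v^j\ge 0$. Then $R's'=\sum_j s_j r^j=Rs\in S$, so $s'\in C_S(R')$ and validity of $\psi$ gives $\sum_v\psi(v)s'_v\ge 1$. Since $\sum_v\psi(v)s'_v=\sum_j s_j\sum_v\psi(v)w_v^j\le\sum_j\psi'(r^j)s_j+\epsilon\sum_j s_j$, letting $\epsilon\to 0$ yields $\sum_j\psi'(r^j)s_j\ge 1$, which is exactly validity of $\psi'$.
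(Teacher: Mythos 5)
Your proof is correct. Note that the paper does not actually prove Theorem~\ref{th-minimal-sublinear}: it only states it, citing Conforti, Cornu\'ejols, Daniilidis, Lemar\'echal and Malick, and then remarks that the theorem ``implies that $\psi'$ cannot take the value $-\infty$'' --- the very point you correctly identify as the crux. Your finiteness argument is the right one: showing that every conic representation of $0$ has nonnegative value (by appending a point $\sigma\in S$ with coefficient $1$ and scaling the representation of $0$ by $M\to\infty$ to contradict validity), and then bounding an arbitrary representation of $\hat r$ below by $-\psi(-\hat r)$ via the augmented representation of $0$. The homogeneity, subadditivity, and validity steps are all routine and carried out correctly (the $\epsilon$-optimal representations assembled into a single matrix $R'$ is exactly how validity of $\psi'$ should be transferred from validity of $\psi$). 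You are also right to flag the implicit hypothesis $S\neq\emptyset$: for $S=\emptyset$ every function is vacuously valid and the statement fails (e.g.\ $\psi\equiv-1$ gives $\psi'\equiv-\infty$), so this assumption, which the paper never makes explicit, is genuinely needed.
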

In the above theorem, the summations are taken over all finite subsets of $\R^n$. We also stress that the above theorem implies that $\psi'$ cannot take the value $-\infty$. If $\psi$ and $\psi'$ are as in Theorem \ref{th-minimal-sublinear}, then $\psi'\le \psi$ by definition. Therefore to characterize minimal valid functions, one can concentrate on sublinear functions.
However, unlike the case $S=b+\Z^n$ (see Lemma \ref{Le:Cont-oldProperties}), a minimal valid function can take negative values.

Given a sublinear function $\rho$, let
 \begin{equation}\label{eq-describe}
V_\rho:=\bigl\{r\in\R^n:\rho(r)\leqslant 1\bigr\}\,.
\end{equation}
Then $V_\rho$ is a closed convex set and $0\in \intr(V_\rho)$. Conversely, given a closed convex set $V$ with $0\in \intr(V)$, a sublinear function $\rho$ such that $V=V_\rho$ is a \emph{representation}  of $V$.

\begin{theorem}\label{th-link}
Let $S\subseteq\R^n\setminus\{0\}$ be a closed set, let $\rho$ be a  sublinear function, and let  $V_\rho$ be defined as in  \eqref{eq-describe}.
Then $\rho$ is a valid function for $S$ if and only $V_\rho$ is $S$-free.
\end{theorem}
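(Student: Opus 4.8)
The plan is to mimic the proof of Lemma~\ref{lemma:psi-gauge1}, which handled the special case $S=b+\Z^n$ with $\rho$ a gauge, and to observe that the only properties of $\rho$ actually used are sublinearity together with a characterization of $\intr(V_\rho)$. The first step is therefore to record that $\intr(V_\rho)=\{r\in\R^n\st\rho(r)<1\}$. Since $\rho$ is sublinear it is convex, hence continuous, so $\{r\st\rho(r)<1\}$ is open and contained in $V_\rho$, giving $\{r\st\rho(r)<1\}\subseteq\intr(V_\rho)$. Conversely, if $\bar r\in\intr(V_\rho)$ with $\bar r\neq0$, then $(1+\eps)\bar r\in V_\rho$ for small $\eps>0$, and positive homogeneity forces $\rho(\bar r)<1$; the origin satisfies $\rho(0)=0<1$ as well. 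This is precisely the argument already used for gauges earlier in the excerpt, and crucially it does not rely on $\rho$ being nonnegative.

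For the ``if'' direction, I would assume $V_\rho$ is $S$-free and take any $s\in C_S(R)$, so that $s\ge0$ and $Rs=\sum rs_r\in S$. Using positive homogeneity (note $\rho(s_r r)=s_r\rho(r)$ holds even when $s_r=0$, since $\rho(0)=0$) and then subadditivity, one obtains
$$\textstyle\sum\rho(r)s_r=\sum\rho(s_r r)\ge\rho\bigl(\sum rs_r\bigr)=\rho(Rs).$$
Because $Rs\in S$ and $V_\rho$ is $S$-free, we have $Rs\notin\intr(V_\rho)$, and by the interior characterization above this means $\rho(Rs)\ge1$. Hence $\sum\rho(r)s_r\ge1$, which is exactly validity of $\rho$.

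For the ``only if'' direction I would argue by contraposition: suppose $V_\rho$ is not $S$-free, so there is a point $w\in\intr(V_\rho)\cap S$, whence $\rho(w)<1$. Taking $R$ to be the $n\times1$ matrix whose single column is $w$, the point defined by $s_w=1$ lies in $C_S(R)$ since $Rs=w\in S$, yet $\sum\rho(r)s_r=\rho(w)<1$. Thus $\rho$ is not a valid function, completing both directions.

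I do not expect a genuine obstacle here, as the proof is a direct generalization of Lemma~\ref{lemma:psi-gauge1}. The one point deserving care is the interior characterization of $V_\rho$: in that earlier lemma it was inherited from the gauge formalism, whereas here $\rho$ is only assumed sublinear and may take negative values, so one must verify $\intr(V_\rho)=\{r\st\rho(r)<1\}$ using solely continuity and positive homogeneity, without invoking any gauge identity.
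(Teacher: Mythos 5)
Your proof is correct, and it takes essentially the same route as the paper's argument for Lemma~\ref{lemma:psi-gauge1}: the paper itself states Theorem~\ref{th-link} without proof (deferring to the cited reference), but the natural proof is exactly the one you give, replacing the gauge identity $\intr(K)=\{r\st\gamma_K(r)<1\}$ by the directly verified fact $\intr(V_\rho)=\{r\st\rho(r)<1\}$ for a finite-valued (hence continuous) sublinear $\rho$. Your handling of the one genuinely new point --- that this interior characterization survives when $\rho$ may be negative, via $(1+\eps)\rho(\bar r)\le 1\Rightarrow\rho(\bar r)<1$ regardless of sign --- is exactly what is needed.
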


In view of Theorems \ref{th-minimal-sublinear} and \ref{th-link}, to characterize minimal valid functions for $S$ one has to study representations of $S$-free convex sets, which are in general not unique.
 However, these representations satisfy the following:

 \begin{theorem}\label{th-representations} Let $V\subseteq \R^n$ be a closed convex set with $0\in \intr(V)$ and let $\rho$ be a representation of $V$.
 Then $$\rho(r)\le 0\Longleftrightarrow r\in \rec(V), \mbox{ and } \rho(r)< 0\Longrightarrow r\in \intr(\rec(V)).$$
 Furthermore all representations of $V$ coincide in $V\setminus \intr(\rec(V))$.
\end{theorem}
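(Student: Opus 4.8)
The plan is to handle the three assertions in order, using throughout that a representation $\rho$ is finite-valued and sublinear on $\R^n$, hence convex and therefore continuous everywhere. I would also keep in mind the two basic facts about the recession cone: since $0\in\intr(V)\subseteq V$, a vector $r$ lies in $\rec(V)$ if and only if $\lambda r\in V$ for all $\lambda\ge 0$ (the standard description of the recession cone of a closed convex set anchored at the point $0\in V$).

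First I would prove the equivalence $\rho(r)\le 0\iff r\in\rec(V)$. By the remark above, $r\in\rec(V)$ iff $\lambda r\in V=V_\rho$ for every $\lambda\ge 0$, that is, iff $\rho(\lambda r)=\lambda\rho(r)\le 1$ for all $\lambda\ge 0$, where I use positive homogeneity. If $\rho(r)\le 0$ this holds trivially, so $r\in\rec(V)$; if $\rho(r)>0$ then $\lambda\rho(r)>1$ for $\lambda$ large, so $r\notin\rec(V)$. This gives the first equivalence. For the implication $\rho(r)<0\Rightarrow r\in\intr(\rec(V))$ I would invoke continuity: the set $\{r:\rho(r)<0\}$ is open, and by the equivalence just established it is contained in $\rec(V)$; an open subset of $\rec(V)$ is contained in $\intr(\rec(V))$, so every $r$ with $\rho(r)<0$ is an interior point of the recession cone.

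The main work is the final statement, and the key idea is to compare an arbitrary representation $\rho$ with the canonical one, the gauge $\gamma_V$, which is itself a representation by Lemma~\ref{lemma:gauge->sub} together with the identity $\gamma_V(x)\le 1\iff x\in V$. I would first show that $\rho(r)=\gamma_V(r)$ whenever $\rho(r)>0$: writing $t=\rho(r)>0$, positive homogeneity gives $\rho(r/t)=1$, hence $r/t\in V$ and so $\gamma_V(r)\le t$; conversely, setting $s=\gamma_V(r)$ one has $r/s\in V=V_\rho$, so $\rho(r)/s\le 1$, forcing $t\le s$, whence $\gamma_V(r)=t=\rho(r)$. This settles the points of $V\setminus\rec(V)$, where $\rho(r)>0$ by the first equivalence. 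The remaining points of $V\setminus\intr(\rec(V))$ lie in $\rec(V)\setminus\intr(\rec(V))$; there the first equivalence gives $\rho(r)\le 0$ while the contrapositive of the second implication gives $\rho(r)\ge 0$, so $\rho(r)=0=\gamma_V(r)$. Since $\rec(V)\subseteq V$, these two cases exhaust $V\setminus\intr(\rec(V))$, and we conclude that every representation agrees with $\gamma_V$, hence with every other representation, on $V\setminus\intr(\rec(V))$.

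I expect the comparison-with-the-gauge step to be the crux. One must be careful that the two-sided inequality $\rho(r)=\gamma_V(r)$ genuinely uses both directions of the membership identity $\gamma_V(x)\le 1\iff x\in V$ in combination with $V=V_\rho$, and that the delicate case $r\in\rec(V)\setminus\intr(\rec(V))$ is resolved not by a direct computation but by squeezing $\rho(r)$ between the bounds supplied by the first two parts of the theorem. The continuity of $\rho$, used in the second part, is what makes the negativity of $\rho$ an \emph{open} condition and thus the only mechanism by which distinct representations can disagree, precisely on $\intr(\rec(V))$.
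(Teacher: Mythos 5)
Your proof is correct and complete. Note that the paper itself states Theorem~\ref{th-representations} without proof, deferring to the cited references of Conforti et al.\ and Basu, Cornu\'ejols and Zambelli, so there is no in-paper argument to compare against; judged on its own, your argument is sound. The first equivalence via $r\in\rec(V)\iff\lambda r\in V\ \forall\lambda\ge 0$ (valid because $V$ is closed convex and $0\in V$) together with positive homogeneity is exactly right; the openness of $\{\rho<0\}$ from continuity of the finite sublinear (hence convex) function $\rho$ correctly yields the second implication; and the comparison with the gauge $\gamma_V$ handles the uniqueness claim cleanly. The only points worth making fully explicit are the two small facts you implicitly use in the gauge comparison: that $s=\gamma_V(r)>0$ whenever $\rho(r)>0$ (otherwise $r\in\rec(V)$ and $\rho(r)\le 0$ by the first part), and that the infimum defining $\gamma_V(r)$ is attained, i.e.\ $r/s\in V$, which requires the closedness of $V$. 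With those noted, the squeeze $0\le\rho(r)\le 0$ on $\rec(V)\setminus\intr(\rec(V))$ and the identity $\rho=\gamma_V$ on $V\setminus\rec(V)$ exhaust $V\setminus\intr(\rec(V))$ as you say, since $\rec(V)\subseteq V$.
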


Theorem \ref{TH-maximal} shows that if a polyhedron $K=\{x\in \R^n\st a_ix\le 1,\,i\in I\}$ with $0\in \intr(K)$ is a maximal $(b+\Z^n)$-free convex set, then $\dim(\rec(K))<n$, hence $\intr(\rec(K))=\emptyset$. Therefore, by Theorem \ref{th-representations}, the gauge of $K$ is the \emph{unique} representation of $K$.

However, even in the case $S=\Z^n_+$, uniqueness does not hold. Basu, Conforti, Cornu\'ejols and Zambelli~\cite{bccz2} show that in this case a maximal $S$-free convex set is always a polyhedron $K$, but in general $K$ admits an infinite set of representations; see also \cite{conforti2013cut}.

As an example, consider the polyhedron $K=\{r\in \R^2\st r_1\le 1,r_2\le 1,\frac{1}{2}(-r_1+r_2)\le 1\}$.  Then the function $\gamma_K$ defined as
$\gamma_K(r)=\max\{0,r_1,r_2,\frac{1}{2}(-r_1+r_2)\}$ is a representation of $K$ and indeed $\gamma_K$ is the gauge of $K$.
However, the function $\mu_K$ defined as $\mu_K(r)=\max\{r_1,r_2,\frac{1}{2}(-r_1+r_2)\}$ is also a representation of $K$, and we will see that $\mu_K$ is the {\em smallest representation} of $K$, in the sense that $\mu_K\le \rho_K$ for every representation $\rho_K$ of $K$.
\smallskip

The \emph{polar} of a set $V\subseteq \R^n$  is the set $V^{\circ}=\{r\in \R^n\st r d\le 1 \mbox{ for all $d\in V$}\}$. A  set $G\subseteq \R^n$ is  a \emph{prepolar} of $V$ if $G^{\circ}=V$. If $V$ is a closed convex set and $0\in \intr(V)$, then $V^{\circ}$ is a bounded set and  $(V^{\circ})^{\circ}=V$. Therefore in this case  the polar of $V$ is itself a prepolar, but $V$ may have other prepolars.

For the polyhedron $K$ given in the above example, $K^{\circ}=\conv\{0,r_1,r_2,\frac{1}{2}(-r_1+r_2)\}$ and  $(K^{\circ})^{\circ}=K$, so $K^{\circ}$ is a prepolar of $K$. However, let $G=\conv\{r_1,r_2,\frac{1}{2}(-r_1+r_2)\}$. Then $G^{\circ}=K$, so $G$ is also a prepolar of $K$.

The {\em support function} of a set $G\subset\R^n$ is
\begin{equation}\label{support}
\sigma_G(r):=\sup_{d\in G}d r\,.
\end{equation}
The support function is  sublinear and remains unchanged if $G$ is replaced by its
closed convex hull: $\sigma_G=\sigma_{\overline{\conv}{(G)}}$. Conversely, any
sublinear function $\sigma$ is the support function of a
closed convex set, defined~by
$$G_\sigma:=\bigl\{d\in\R^n\,:\:d r\leqslant\sigma(r)\mbox{ for all
  $r\in\R^n$}\bigr\}.$$

\begin{theorem}
Let  $V$ be a closed convex set with $0\in \intr(V)$. Then $V$ admits an inclusion-wise smallest prepolar. The smallest representation of $V$ is the support function of the smallest prepolar of $V$.
\end{theorem}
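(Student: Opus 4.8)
The plan is to reduce the statement to a purely order-theoretic fact about prepolars by setting up an inclusion-preserving bijection between representations of $V$ and closed convex prepolars of $V$, and then to exhibit a smallest prepolar explicitly through the extreme points of $V^{\circ}$. The key computation underlying the bijection is that for any closed convex $G$,
$$V_{\sigma_G}=\{r\in\R^n\st \sigma_G(r)\le 1\}=\{r\in\R^n\st dr\le 1\ \text{for all }d\in G\}=G^{\circ}.$$
Hence $\sigma_G$ is a representation of $V$ exactly when $G^{\circ}=V$, i.e.\ when $G$ is a prepolar. Conversely, a representation $\rho$ is a finite sublinear function, hence continuous and therefore closed, so $\rho=\sigma_{G_\rho}$ for $G_\rho=\{d\st dr\le\rho(r)\ \text{for all }r\}$, and the computation above gives $G_\rho^{\circ}=V_\rho=V$, so $G_\rho$ is a prepolar (necessarily bounded, as $G_\rho\subseteq V^{\circ}$). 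These two assignments are mutually inverse; since $G_1\subseteq G_2$ implies $\sigma_{G_1}\le\sigma_{G_2}$ and $\rho_1\le\rho_2$ implies $G_{\rho_1}\subseteq G_{\rho_2}$, they constitute an order isomorphism between prepolars ordered by inclusion and representations ordered pointwise. Thus a smallest representation exists and equals $\sigma_{G^*}$ as soon as a smallest prepolar $G^*$ exists.

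The heart of the argument is therefore to produce a smallest prepolar (the case $V=\R^n$, where $V^{\circ}=\{0\}$ and the only representation is the zero function, being trivial). Since $0\in\intr(V)$, the polar $V^{\circ}$ is compact, and by the bipolar theorem $G^{\circ}=V$ is equivalent to $G^{\circ\circ}=V^{\circ}$, that is, to $\overline{\conv}(G\cup\{0\})=V^{\circ}$; so the prepolars are precisely the closed convex sets that, together with the origin, generate $V^{\circ}$. For any such $G$, Milman's partial converse to the Krein--Milman theorem forces every extreme point of $V^{\circ}$ to lie in $\overline{G\cup\{0\}}=G\cup\{0\}$; hence each prepolar contains all extreme points of $V^{\circ}$ other than $0$, and therefore contains $G^*:=\overline{\conv}\bigl(\operatorname{ext}(V^{\circ})\setminus\{0\}\bigr)$. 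On the other hand, by the Krein--Milman theorem $\overline{\conv}(G^*\cup\{0\})=\overline{\conv}(\operatorname{ext}(V^{\circ})\cup\{0\})=V^{\circ}$, so $G^*$ is itself a prepolar. Being contained in every prepolar, $G^*$ is the smallest one.

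Combining the two steps finishes the proof: $\sigma_{G^*}$ is a representation because $G^*$ is a prepolar, and for any representation $\rho=\sigma_{G_\rho}$ the inclusion $G^*\subseteq G_\rho$ yields $\sigma_{G^*}\le\sigma_{G_\rho}=\rho$, so $\sigma_{G^*}$ is the smallest representation. The step I expect to be the main obstacle is the existence of the smallest prepolar: the naive candidate, the intersection of all prepolars, is not visibly a prepolar because polarity does not commute with intersections. Routing the argument through the extreme-point description of $V^{\circ}$ is what makes the candidate simultaneously a genuine prepolar (via Krein--Milman) and minimal (via Milman's theorem). The one subtlety requiring care is the bookkeeping of the origin—excluding it from the generating set while reinstating it inside the closed convex hull—since $0$ contributes nothing to the polar yet its presence or absence determines whether $G^*$ is truly the inclusion-wise smallest prepolar.
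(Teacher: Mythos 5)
The paper states this theorem without proof (it is quoted from the references on smallest representations), so there is no in-paper argument to compare against; I can only judge your proposal on its own terms and against the surrounding text. Your order-isomorphism between representations and \emph{closed convex} prepolars is correct, and routing the existence question through Krein--Milman and Milman's converse is the right idea. The genuine problem is your identification of the smallest prepolar as $G^*=\overline{\conv}\bigl(\operatorname{ext}(V^\circ)\setminus\{0\}\bigr)$. In this paper a prepolar is \emph{any} set $G$ with $G^\circ=V$ --- it need not be convex --- and the very next theorem in the text asserts that for a polyhedron $K=\{x\st a_ix\le 1,\, i\in I\}$ the smallest prepolar is the \emph{finite} set $\{a_i\st i\in I\}$. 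That finite set is a prepolar which does not contain your $G^*$ (which equals $\conv\{a_i\st i \in I\}$ in that case), so $G^*$ is not inclusion-wise smallest in the paper's sense; your step ``each prepolar contains $\operatorname{ext}(V^\circ)\setminus\{0\}$ and therefore contains $G^*$'' silently uses convexity of $G$. The repair is to drop the convex hull: Milman's theorem already gives $\operatorname{ext}(V^\circ)\subseteq\cl(G\cup\{0\})$ for every prepolar $G$, hence every \emph{closed} prepolar contains $E:=\operatorname{ext}(V^\circ)\setminus\{0\}$ and therefore contains $\cl(E)$; and $\cl(E)$ is itself a prepolar by Krein--Milman, exactly as in your argument, since $E\subseteq\cl(E)\subseteq V^\circ$ and $E^\circ=V$. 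So $\cl(E)$, not $\overline{\conv}(E)$, is the smallest closed prepolar, and this is consistent with the polyhedral case. Your second conclusion is unaffected by this correction, because $\sigma_{\cl(E)}=\sigma_{\overline{\conv}(E)}$, so the smallest representation is still the support function of the smallest prepolar.

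A further caveat you should surface explicitly: with the paper's literal definition (no closedness requirement on $G$) the first claim of the theorem is actually false, so some such restriction is unavoidable and your proof must say which class of sets it quantifies over. For $V=[-1,1]^2$, both $G_1=\{e^1,-e^1,e^2,-e^2\}$ and $G_2=\{e^1,-e^1,e^2\}\cup\{(1-\tfrac1m)(-e^2)\st m\ge 2\}$ satisfy $G^\circ=V$, yet $G_1\cap G_2=\{e^1,-e^1,e^2\}$ has polar strictly larger than $V$, so no prepolar can be contained in both; Milman only places the extreme points of $V^\circ$ in $\cl(G)$, not in $G$. Once the theorem is read as ranging over closed prepolars and the convex hull is removed from your candidate, the rest of your argument (the bijection with representations and the invariance of support functions under closed convex hulls) goes through and yields both conclusions.
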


If $V$ is a polyhedron $K$ and  $0\in \intr(K)$,  the support function of the smallest prepolar of $V$ can be computed with formula \eqref{eq-gauge-poly}.

\begin{theorem}
Let $ a_ix\le 1, i\in I$ be an irredundant representation of   a polyhedron $K$ with $0\in \intr(K)$. Then $\{a_i,\,i\in I\}$ is the smallest prepolar of $K$. Hence the smallest representation of $K$ is the function $\mu_K$ defined as \begin{equation}\label{eq:psi-formula} \mu_K(r)=\max_{i\in I}a_ir.\end{equation}
\end{theorem}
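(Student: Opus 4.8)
The plan is to use polar duality to pin down the smallest prepolar of $K$ explicitly, and then feed it into the preceding theorem (smallest representation $=$ support function of the smallest prepolar) to read off $\mu_K$. The work splits into showing that $G:=\{a_i : i\in I\}$ is a prepolar, that its nonzero elements are exactly the vertices of $K^\circ$, and that no prepolar can shed any of them.

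First I would verify that $G$ is a prepolar at all. Directly from the definition of the polar, $G^\circ=\{x\in\R^n : a_i x\le 1,\ i\in I\}=K$, so $G$ is a prepolar of $K$. Applying the bipolar relation $(G^\circ)^\circ=\cl(\conv(G\cup\{0\}))$ to this finite $G$ (the closure being superfluous) gives $K^\circ=\conv(\{0\}\cup\{a_i : i\in I\})$; equivalently, this is the standard description of the polar of a polyhedron presented as $a_i x\le 1$ with $0\in\intr(K)$. In particular $K^\circ$ is a polytope whose vertices form a subset of $\{0\}\cup\{a_i : i\in I\}$.

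The central step is to use irredundancy to match the $a_i$ with the nonzero vertices of $K^\circ$. I would show that the inequality $a_i x\le 1$ is redundant in the system $\{a_j x\le 1 : j\in I\}$ if and only if $a_i\in\conv(\{0\}\cup\{a_j : j\ne i\})$, i.e. exactly when $a_i$ fails to be a vertex of $K^\circ$. The forward direction is a Farkas-type argument: since $0\in\intr(K)$ makes every right-hand side positive and the system $\{a_j x\le 1 : j\ne i\}$ feasible, redundancy yields multipliers $\mu_j\ge 0$ with $a_i=\sum_{j\ne i}\mu_j a_j$ and $\sum_{j\ne i}\mu_j\le 1$, which writes $a_i$ as a convex combination of $\{0\}\cup\{a_j : j\ne i\}$; the converse is immediate, since such a representation gives $a_i x=\sum_{j\ne i}\mu_j a_j x\le\sum_{j\ne i}\mu_j\le 1$ for every $x\in K$. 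Hence irredundancy of the given representation forces each $a_i$ to be an extreme point of $K^\circ$, and together with the previous step the nonzero vertices of $K^\circ$ are precisely $\{a_i : i\in I\}$ (note $a_i\ne 0$, since $a_i=0$ would make the $i$-th inequality trivially redundant).

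Finally I would establish minimality. By the bipolar relation, every prepolar $G'$ of $K$ satisfies $\cl(\conv(G'\cup\{0\}))=K^\circ$; since each $a_i$ is an extreme point of the compact set $K^\circ$ with $a_i\ne 0$, Milman's theorem forces $a_i\in\cl(G')$, so no prepolar can avoid the $a_i$. As $\{a_i : i\in I\}$ is itself closed and is a prepolar, it is the smallest prepolar guaranteed by the previous theorem. Its support function is $\sigma_{\{a_i\}}(r)=\sup_{d\in\{a_i\}}d\,r=\max_{i\in I}a_i r=\mu_K(r)$, so by the characterization of the smallest representation as the support function of the smallest prepolar, $\mu_K$ is the smallest representation of $K$. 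The main delicacy I anticipate is precisely this minimality step: one must argue that the $a_i$ are genuine vertices of $K^\circ$ (the exact role of irredundancy) and that extreme points survive in every prepolar (via Milman), while keeping track of whether $0$ is itself a vertex of $K^\circ$ — the very condition that separates $\mu_K$ from the gauge $\gamma_K$, through the dimension of $\rec(K)$ as in Theorem~\ref{th-representations}.
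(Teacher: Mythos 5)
The survey states this theorem without proof (it defers to \cite{basu2011sublinear} and \cite{conforti2013cut}), so there is no in-paper argument to compare against; judged on its own, your proof is the natural one and its substantive content is correct. The computation $\{a_i\,:\,i\in I\}^\circ=K$, the identity $K^\circ=\conv(\{0\}\cup\{a_i\,:\,i\in I\})$, the Farkas-type equivalence between redundancy of $a_ix\le1$ and $a_i\in\conv(\{0\}\cup\{a_j\,:\,j\ne i\})$ (which correctly uses $0\in\intr(K)$ for feasibility of the reduced system and finiteness of $I$), and the final identification $\sigma_{\{a_i\}}(r)=\max_{i\in I}a_ir$ are all right, and they isolate irredundancy as exactly the condition making the $a_i$ the nonzero vertices of $K^\circ$.

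The one step you should tighten is the sentence ``Milman's theorem forces $a_i\in\cl(G')$, so no prepolar can avoid the $a_i$.'' Milman only gives membership in the closure, and the stronger claim is literally false under the survey's definition of prepolar: the set $K^\circ\setminus\{a_1\}$ is a prepolar of $K$ (its closure is $K^\circ$, and polars are insensitive to taking closures), and it avoids $a_1$. Consequently $\{a_i\,:\,i\in I\}$ is not contained in every prepolar, and the family of all prepolars ordered by inclusion has no literal minimum; the preceding theorem has to be read as asserting a smallest prepolar up to closure (equivalently, a smallest closed, or closed convex, prepolar), which is how the cited sources handle it. Your argument does repair itself once this is made explicit: granting the existence of a smallest (closed) prepolar $G^\bullet$, it is contained in the prepolar $\{a_i\,:\,i\in I\}$, hence finite and therefore closed, and Milman applied to $G^\bullet$ forces $\{a_i\,:\,i\in I\}\subseteq\cl(G^\bullet)=G^\bullet$, so $G^\bullet=\{a_i\,:\,i\in I\}$. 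None of this affects the formula for $\mu_K$, since a support function only sees the closed convex hull of the prepolar, so the conclusion that $\mu_K(r)=\max_{i\in I}a_ir$ is the smallest representation stands.
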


For more details on the theory of smallest representations, see Basu, Cornu\'ejols and Zambelli \cite{basu2011sublinear} and Conforti et al.~\cite{conforti2013cut}.

\subsubsection{Maximal $S$-free convex sets}

Theorem~\ref{thm:DBS} extends to more general sets $S$. Specifically, when $S=(b+\Z^n)\cap Q$, where $Q$ is a rational polyhedron, the authors in \cite{bccz2} prove that every maximal $S$-free convex set is a polyhedron with at most $2^n$ facets. Mor\'an and Dey \cite{moran2011maximal} showed that the same statement holds when $S=(b+\Z^n)\cap C$, where $C$ is a convex set. We illustrate below how this result can be obtained by means of a connection with the Helly number studied by Averkov \cite{averkov2013maximal}.
\medskip

For a set $S\subseteq \R^n$, Averkov introduces the following definitions and parameters.

\begin{definition} [$f(S)$, largest number of facets in a $n$-dimensional maximal $S$-free convex set]  If every $n$-dimensional maximal $S$-free convex set is a polyhedron with at most $k$ facets, $f(S)$ is the minimal $k$ as above. If there
exist no $n$-dimensional maximal S-free convex sets (e.g., for $S=\R^n$), define $f(S)=-\infty$.  If
there exist maximal $S$-free convex sets which are not polyhedra or maximal $S$-free polyhedra
with an arbitrarily large number of facets, define $f(S)=+\infty$.
\end{definition}

A set $A\subseteq \R^n$ is called \emph{$S$-convex} if it is of the form $A=S\cap C$ for some convex set $C\subseteq\R^n$.

\begin{definition} [Helly number] Given a nonempty family $\mathcal{F}$
of sets, the Helly number $h(\mathcal{F})$ of $\mathcal{F}$ is defined as follows. If $\mathcal{F}=\emptyset$, $h(\mathcal{F})=0$. If $\mathcal{F}\ne \emptyset$ and there exists $k$ such that
\[F_1\cap\dots \cap F_m=\emptyset \Longrightarrow \exists i_1,\dots, i_{\ell} \in [m], \ell\le k, \mbox{ such that } F_{i_1}\cap\dots\cap F_{i_{\ell}}=\emptyset\]
for all $F_1\dots,F_m\in \mathcal{F}$, then $h(\mathcal{F})$ is the minimal $k$ as above. In all other cases, $h(\mathcal{F})=+\infty$.
\end{definition}

For $S\subseteq \R^n$ we use the notation
$ h(S) = h(\{S\cap C\colon C\subseteq \R^n\mbox{ is convex}\}) $.
That is, $h(S)$ is the Helly number of the family of all $S$-convex sets.
Note that if $C$ is a convex set, then $h(S\cap C)\le h(S)$. When $S=\Z^n$, Doignon~\cite{Doignon1973} proves $f(S) = h(S) = 2^n$ -- see Theorems~\ref{thm:DBS} and~\ref{th:h-Zn}.

Averkov \cite{averkov2013maximal} proves the following.

\begin{theorem}\label{TH-FlessH}
Given $S\subseteq \R^n$, $f(S)\le h(S)$.
\end{theorem}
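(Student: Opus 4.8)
The plan is to prove $f(S)\le h(S)$ by showing that any $n$-dimensional maximal $S$-free convex set $K$ has at most $h(S)$ facets, via a combinatorial argument that encodes the facets of $K$ as a minimal infeasible family of $S$-convex sets. First I would dispose of the trivial cases: if $h(S)=+\infty$ the inequality is automatic, and if there are no $n$-dimensional maximal $S$-free sets then $f(S)=-\infty\le h(S)$. So assume $h(S)=h<+\infty$ is finite and let $K$ be an $n$-dimensional maximal $S$-free convex set; the goal is to prove $K$ is a polyhedron with at most $h$ facets.

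The key idea is to associate each facet of $K$ with an ``enlargement'' of $K$ that meets $S$. Concretely, for a given facet $F$ of $K$, lying on a hyperplane $\{x : a x = \beta\}$ with $K\subseteq\{a x\le\beta\}$, I would consider the half-space $H_F^- = \{x : a x \le \beta\}$ and think of $K$ as $\bigcap_F H_F^-$ (once polyhedrality is known). The crucial point is maximality: since $K$ is a maximal $S$-free set, relaxing any single facet — replacing $H_F^-$ by a strictly larger half-space $\{x : a x \le \beta+\epsilon\}$ — must destroy $S$-freeness, meaning the enlarged convex set contains a point of $S$ in its interior. The plan is to turn each facet into a closed convex region $C_F$ (for instance the complementary closed half-space $\{x : a x \ge \beta\}$, or a thin slab just outside $F$) such that the sets $S\cap C_F$ form a family of $S$-convex sets with empty common intersection precisely because $\intr(K)\cap S=\emptyset$, yet the removal of any one $C_F$ yields a nonempty intersection because of the maximality-forced point of $S$ witnessing that facet. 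The Helly number $h(S)$ then bounds the size of this minimal infeasible subfamily, giving at most $h(S)$ facets.

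The main obstacle is twofold. First, one must establish that $K$ is in fact a polyhedron before counting facets, and more delicately, handle the recession/lineality structure: a maximal $S$-free set of the form $Q+L$ (as in Theorem~\ref{TH-maximal}) has its combinatorial complexity concentrated in the bounded factor $Q$, so I would want to project onto $L^\perp$ or otherwise reduce to the pointed/bounded case where each facet genuinely contributes a witness point of $S$. Second, and this is the heart of the argument, is correctly setting up the $S$-convex sets so that the Helly property applies: I must verify that the family $\{S\cap C_F\}_F$ (or a suitable variant using closed complements of the defining half-spaces) has empty intersection exactly when $K$ is $S$-free, and that minimality of the infeasible subfamily corresponds to the irredundancy of the facets, which is where maximality of $K$ is used. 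Getting the direction of the inclusions and the strictness (interior versus closure) right is the subtle part.

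Once the encoding is in place, the conclusion is immediate: by definition of $h(S)$, from the empty intersection $\bigcap_F (S\cap C_F)=\emptyset$ over all facets one extracts a subfamily of size at most $h(S)$ with empty intersection; minimality (each facet is needed) forces the number of facets to be at most $h(S)$, i.e.\ $f(S)\le h(S)$. I would double-check the boundary/degenerate situations — facets whose relative interiors behave exotically, or the case where $S$ is unbounded — but expect these to be handled by the same slab construction applied in the reduced space $L^\perp$ after factoring out the lineality space.
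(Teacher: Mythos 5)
Your overall strategy---encode the facets of $K$ as a family of $S$-convex sets with empty intersection and invoke the Helly number---is the right one, but the specific family you propose is set up backwards, and this is not a cosmetic issue. The paper's proof (under the same polyhedrality assumption that you also defer) writes $K=H_1\cap\dots\cap H_m$ as an intersection of the \emph{defining} closed half-spaces and applies the Helly number to the family $\{S\cap\intr(H_i)\}_{i}$: its total intersection is $S\cap\intr(K)=\emptyset$ by $S$-freeness, so some subfamily of size $\ell\le h(S)$ already has empty intersection, whence $K':=H_{i_1}\cap\dots\cap H_{i_\ell}$ is an $S$-free polyhedron containing $K$; maximality of $K$ forces $K=K'$, so $K$ has at most $h(S)$ facets. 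Note that no ``minimal infeasible subfamily'' and no per-facet witness point are needed: maximality is used exactly once, at the end, to conclude $K=K'$.

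Your family $C_F=\{x: a_Fx\ge\beta_F\}$ of complementary half-spaces does not do this job. First, $\bigcap_F(S\cap C_F)=\emptyset$ is \emph{not} a consequence of $\intr(K)\cap S=\emptyset$: for a bounded full-dimensional $K$ the complements already have empty intersection for purely linear-algebraic reasons (the outer normals $a_F$ positively span $\R^n$, so $\sum_F\lambda_Fa_F=0$ with $\lambda\ge 0$, $\lambda\ne 0$, and any $x\in\bigcap_FC_F$ would give $0=\sum_F\lambda_Fa_Fx\ge\sum_F\lambda_F\beta_F>0$), independently of $S$; so the Helly number hands you a small subfamily of complements with empty intersection that carries no information about the number of facets of $K$. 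Second, the minimality claim fails: the point of $S$ produced by relaxing facet $F$ lies in $C_F$ but in the \emph{interiors of the other defining half-spaces} $H_{F'}$, $F'\ne F$, i.e., outside every $C_{F'}$ with $F'\ne F$, so it does not witness nonemptiness of $\bigcap_{F'\ne F}(S\cap C_{F'})$. The witnesses you have in hand are precisely the right ones for the family of open defining half-spaces, not for their complements; the thin-slab variant suffers the same defect. Replacing $C_F$ by $\intr(H_F)$ throughout turns your argument into the paper's proof and also makes the projection modulo the lineality space unnecessary, since the half-space representation handles sets of the form $Q+L$ directly.
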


\begin{proof} We prove the theorem under the assumption that every maximal $S$-free convex set is a polyhedron.
Let $K\subseteq \R^n$ be a maximal $S$-free polyhedron. Represent $K$ as the intersection of closed half-spaces $H_1, \dots, H_m$. Then $S\cap \intr(H_1)\cap \dots \cap\intr(H_m)=\emptyset$. By the definition of Helly number, there exist indices $i_1,\dots, i_{\ell} \in [m]$, with $\ell\le h(S)$, such that
$S\cap\intr(H_{i_1})\cap\dots \cap\intr(H_{i_{\ell}})=\emptyset$. It follows that $K\subseteq K':=H_{i_1}\cap\dots\cap H_{i_{\ell}}$, where $K'$ is an $S$-free polyhedron with at most $h(S)$ facets. By maximality of $K$, we have $K=K'$ and thus $K$ has at most $h(S)$ facets.

In the more general case in which maximal $S$-free convex sets are not guaranteed to be polyhedra, Averkov \cite{averkov2013maximal} derives inequality $f(S)\le h(S)$ by approximating a maximal $S$-free convex set $K$ with a sequence of polyhedra $K_t\subseteq K$ that converges to $K$.
\end{proof}

Averkov \cite{averkov2013maximal} proves that if $S$ is a discrete set (i.e., $S\cap B$ is finite for every bounded set $B\subseteq\R^n$), then $h(S)=f(S)$. We sketch his proof below.

\begin{theorem}\label{TH-h=f}
If $S\subseteq\R^n$ is discrete, then $h(S)=f(S)$.
\end{theorem}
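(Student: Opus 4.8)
By Theorem~\ref{TH-FlessH} we already have $f(S)\le h(S)$, so the task is to prove the reverse inequality $h(S)\le f(S)$ for a discrete set $S$. The plan is to assume $f(S)=k<+\infty$ (if $f(S)=+\infty$ there is nothing to prove) and to show that any family of $S$-convex sets with empty intersection already has a subfamily of at most $k$ members with empty intersection. Concretely, suppose $A_1=S\cap C_1,\dots,A_m=S\cap C_m$ are $S$-convex sets, each $C_j$ convex, with $\bigcap_j A_j=\emptyset$. We must extract indices $i_1,\dots,i_\ell$ with $\ell\le k$ and $A_{i_1}\cap\dots\cap A_{i_\ell}=\emptyset$.

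The natural route is to reduce the Helly-type question for $S$-convex sets to a statement about $S$-free convex sets, which is what $f(S)$ controls. First I would reduce to the case of a finite ground set: because $S$ is discrete, it suffices to prove the Helly property for subfamilies lying inside a fixed bounded region, where $S$ meets only finitely many points. Next, given a minimal infeasible subfamily (a subfamily with empty intersection all of whose proper subfamilies have nonempty intersection), I would argue that minimality is witnessed, for each dropped set, by a point of $S$ lying in the intersection of the others; these finitely many witness points in $S$ are the key combinatorial data. The idea is then to build an $S$-free convex set $K$ out of the convex hull structure of these witnesses, so that the defining half-spaces of $K$ correspond to the sets of the minimal subfamily. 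Since every $n$-dimensional maximal $S$-free convex set has at most $k=f(S)$ facets, the number of sets in the minimal infeasible subfamily is forced to be at most $k$, which is exactly the Helly bound $h(S)\le f(S)$.

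The step I expect to be the main obstacle is the construction that passes from a minimal infeasible subfamily of $S$-convex sets to a genuine maximal $S$-free convex set whose facet count bounds the size of the subfamily. One must carefully separate each witness point of $S$ from the others by a hyperplane, assemble these separating half-spaces into a single convex body $K$ that avoids $S$ in its interior, verify that $K$ is full-dimensional so that the facet count is governed by $f(S)$ rather than some degenerate count, and then enlarge $K$ to a maximal $S$-free set without decreasing the relevant number of facets. Handling the possibility of lower-dimensional or unbounded intersections, and ensuring the correspondence between dropped sets and facets is injective, is where the discreteness of $S$ and a compactness or finiteness argument will have to do the real work.

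A clean way to organize this is to invoke the finiteness afforded by discreteness at the outset: restrict attention to a bounded box $B$ large enough to contain all the relevant configuration, replace each $C_j$ by $C_j\cap B$, and observe that $S\cap B$ is a finite set, so that all the Helly-type reasoning takes place among finitely many points and finitely many convex sets. Within this finite setting, the separation and maximal-extension arguments become elementary, and the bound $h(S)\le f(S)$ follows by matching the minimal infeasible subfamily with the facets of a maximal $S$-free polyhedron. Combining this with Theorem~\ref{TH-FlessH} yields the desired equality $h(S)=f(S)$.
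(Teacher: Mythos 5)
Your overall strategy is the right one (and is the same as the paper's in outline): reduce to finite $S$, exhibit a maximal $S$-free polyhedron whose facets are matched with the members of a minimal infeasible subfamily, and invoke the bound $f(S)$ on the number of facets. However, the step you yourself flag as ``the main obstacle'' --- passing from a minimal infeasible subfamily of general $S$-convex sets $S\cap C_1,\dots,S\cap C_m$ to a maximal $S$-free polyhedron with one facet per member --- is left unresolved, and as stated it does not go through. With arbitrary convex sets $C_j$ there is no reason for the members of a minimal infeasible subfamily to correspond injectively to facets of a single polyhedron: each $C_j$ may need several supporting half-spaces to carve out $S\cap C_j$, so a subfamily of size $m$ could a priori require far more than $m$ half-spaces, and the facet count of any $S$-free polyhedron you build would not directly bound $m$.

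The missing idea is a preliminary reduction of the \emph{family} itself. Since $S$ is finite, every $S$-convex set $S\cap C$ can be written as $S\cap H_1\cap\dots\cap H_t$ for finitely many \emph{open half-spaces} $H_j$ (separate each point of $S\setminus C$ from $C$). A standard argument then shows that the Helly number of the family of all $S$-convex sets equals the Helly number of the much smaller family $\mathcal F=\{S\cap H\colon H \text{ an open half-space}\}$, so it suffices to bound $h(\mathcal F)$. Once the family consists of half-space sections, the construction of $K$ is concrete: given open half-spaces $H_i=\{x\colon a_ix<\beta_i\}$ with $\bigcap_i H_i\cap S=\emptyset$, push each one outward to the supremal $\gamma_i$ keeping the intersection with $S$ empty, discard the indices with $\gamma_i=+\infty$, and observe that finiteness of $S$ forces each surviving inequality $a_ix\le\gamma_i$ to define a facet of $K=\{x\colon a_ix\le\gamma_i,\ i\in I\}$ containing a point of $S$ in its relative interior, so that $K$ is maximal $S$-free and $|I|\le f(S)$. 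This ``push until you touch $S$'' step is the concrete realization of the separating-hyperplane construction you were hoping for; without the reduction to half-spaces first, I do not see how to make your version of the argument work. (The reduction from discrete to finite $S$ is also nontrivial and is handled in the source by a limit argument, but the paper only sketches that part as well, so I would not count it against you.)
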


\begin{proof}
For the sake of simplicity, we assume that $S$ is finite; Averkov \cite{averkov2013maximal} then shows how to derive the result for discrete sets $S$ by means of limit arguments.

Since $S$ is finite, every $S$-convex set can be written as the intersection of $S$ with finitely-many open half-spaces.
By using this, one easily verifies that $h(S)=h(\mathcal F)$, where $\mathcal F$ is the family of all sets of the form $S\cap H$, with $H$ being an open half-space. Thus it is enough to prove that $h(\mathcal F)\le f(S)$.

Let $H_1,\dots,H_m$ be open half-spaces such that $H_1\cap\dots\cap H_m\cap S=\emptyset$. We need to show that there exists $I\subseteq\{1,\dots,m\}$ with $|I|\le f(S)$ such that $\bigcap_{i\in I}H_i\cap S=\emptyset$.
For $i=1,\dots,m$, $H_i=\{x\in\R^n\colon a_i x<\beta_i\}$ for some $a_i\in\R^n$ and $\beta_i\in\R$.

For $i=1,\dots,n$ we do the following. Define $\gamma_i$ as the supremum of the values $\gamma $ such that, if $H_i$ is replaced with $\{x\in\R^n\colon a_ix<\gamma\}$, then we still have empty intersection with $S$. Let $I$ be the set of indices for which $\gamma_i<+\infty$. For $i\in I$, redefine $H_i:=\{x\in\R^n\colon a_ix<\gamma_i\}$. One verifies that $\bigcap_{i\in I}H_i\cap S=\emptyset$.

By definition of $\gamma_i$ and by the finiteness of $S$, one can check that every inequality $a_ix\le\gamma_i,i\in I$ defines a facet of the polyhedron $K=\{x\in\R^n\colon a_ix\le\gamma_i,\,i\in I\}$, and every such facet contains an integer point in its relative interior. Then $K$ is a maximal $S$-free convex set and thus has at most $f(S)$ facets. In other words, $|I|\le f(S)$, and thus $h(\mathcal F)\le f(S)$.
\end{proof}

\begin{theorem}\label{th:h-Zn}
$h(b+\Z^n)=2^n$ for every $b\in\R^n$.
\end{theorem}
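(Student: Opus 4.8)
The plan is to prove $h(b+\Z^n)=2^n$ by combining the two bracketing results already established, namely Theorem~\ref{TH-FlessH} ($f(S)\le h(S)$) and Theorem~\ref{TH-h=f} ($h(S)=f(S)$ for discrete $S$), with the facial structure of maximal lattice-free sets from Theorem~\ref{TH-maximal} and the facet count from Theorem~\ref{thm:DBS}. The key observation is that $S=b+\Z^n$ is a discrete set: its intersection with any bounded set is finite, since $b+\Z^n$ is just a translate of the integer lattice. Hence Theorem~\ref{TH-h=f} applies and gives $h(b+\Z^n)=f(b+\Z^n)$. So the entire problem reduces to computing $f(b+\Z^n)$, the largest number of facets of an $n$-dimensional maximal $(b+\Z^n)$-free convex set.

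First I would record that by the remark following Theorem~\ref{TH-maximal}, the characterization of maximal lattice-free convex sets carries over verbatim to maximal $(b+\Z^n)$-free convex sets, with the only change being that the relative interior of every facet must contain a point of $b+\Z^n$ rather than an integer point. In particular, every $n$-dimensional maximal $(b+\Z^n)$-free convex set is a polyhedron. Next I would adapt the proof of Theorem~\ref{thm:DBS} to this translated setting to obtain the upper bound $f(b+\Z^n)\le 2^n$: if $K$ had more than $2^n$ facets, two facets would carry points $z_1,z_2\in b+\Z^n$ with $z_1-z_2\in 2\Z^n$ (there are only $2^n$ residue classes modulo $2$ of the lattice $\Z^n$, and $z_1-z_2$ lies in $\Z^n$ since $b$ cancels), whence the midpoint $\tfrac12 z_1+\tfrac12 z_2$ again lies in $b+\Z^n$ and is interior to $K$, contradicting that $K$ is $(b+\Z^n)$-free. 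This gives $h(b+\Z^n)=f(b+\Z^n)\le 2^n$.

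For the matching lower bound I would exhibit an $n$-dimensional maximal $(b+\Z^n)$-free polyhedron with exactly $2^n$ facets. The natural candidate is the translate by $b$ of the standard example for $\Z^n$: take the cube $C=b+\{x\in\R^n : 0\le x_i\le 1,\ i=1,\dots,n\}$, which is $(b+\Z^n)$-free, full-dimensional, has exactly $2n$ facets and is maximal, and then use the known simplex-type construction whose $2^n$ facets each contain a lattice point in their relative interior. Concretely, since $f$ and $h$ are translation-invariant (a set is $(b+\Z^n)$-free if and only if its translate by $-b$ is $\Z^n$-free, and facets correspond bijectively), it suffices to produce a single maximal $\Z^n$-free polyhedron attaining $2^n$ facets; Doignon's classical example realizes this bound, certifying $f(\Z^n)\ge 2^n$ and hence $f(b+\Z^n)\ge 2^n$.

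I expect the main obstacle to be the lower bound: the two cited bracketing theorems immediately yield $h(b+\Z^n)=f(b+\Z^n)\le 2^n$ with essentially no work, so the substance of the proof is exhibiting (or invoking) an explicit maximal lattice-free polyhedron with exactly $2^n$ facets. The cleanest route is to reduce to $b=0$ by translation and cite Doignon's theorem, which asserts the existence of such a polyhedron; alternatively one can construct it directly and verify, using Theorem~\ref{TH-maximal}, that each of its $2^n$ facets carries a lattice point in its relative interior. I would also take care to state explicitly that discreteness of $b+\Z^n$ is what licenses the use of Theorem~\ref{TH-h=f}, since that is the hinge connecting the Helly number to the facet count.
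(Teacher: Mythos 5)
Your proof is correct and follows essentially the same route as the paper's: discreteness of $b+\Z^n$ lets Theorem~\ref{TH-h=f} convert the Helly number into the facet-count parameter $f$, the (translated) argument of Theorem~\ref{thm:DBS} gives $f(b+\Z^n)\le 2^n$, and the existence of a maximal lattice-free polyhedron with $2^n$ facets gives the matching lower bound --- the paper simply reduces to $b=0$ at the outset and then cites exactly these two facts. One small slip in a non-essential aside: the cube $b+[0,1]^n$ is not a maximal $(b+\Z^n)$-free set for $n\ge 2$ (the relative interiors of its facets contain no point of $b+\Z^n$), but you correctly discard it in favour of Doignon's example, so the argument is unaffected.
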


\begin{proof}
It suffices to consider the case $b=0$.
By Theorem \ref{thm:DBS}, $f(\Z^n)\le2^n$. Since there exist maximal lattice-free polyhedra with $2^n$ facets, $f(\Z^n)=2^n$. Then Theorem \ref{TH-h=f} yields $h(\Z^n)=f(\Z^n)=2^n$.
\end{proof}


We can now prove the result by Dey and Mor\'an \cite{moran2011maximal}.

\begin{theorem}\label{th-poly-convex}
Let $S=(b+\Z^n)\cap C$, where $b\in\R^n$ and $C$ is a convex set. Then $f(S)\le2^n$.
\end{theorem}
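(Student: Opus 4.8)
The plan is to combine the two key facts that have just been established: the inequality $f(S)\le h(S)$ from Theorem~\ref{TH-FlessH}, which holds for arbitrary sets $S$, and the Helly-number computation $h(b+\Z^n)=2^n$ from Theorem~\ref{th:h-Zn}. So the entire argument reduces to showing that the Helly number cannot increase when we intersect $b+\Z^n$ with a convex set $C$, i.e. $h(S)\le h(b+\Z^n)$ for $S=(b+\Z^n)\cap C$.

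This monotonicity is exactly the observation already recorded in the text immediately after the definition of $h(S)$, namely that for any convex set $C$ one has $h(S\cap C)\le h(S)$. The reason is structural: an $S$-convex set, where $S=(b+\Z^n)\cap C$, has the form $S\cap C'=(b+\Z^n)\cap C\cap C'=(b+\Z^n)\cap(C\cap C')$, and $C\cap C'$ is again convex. Hence the family of $S$-convex sets is a \emph{subfamily} of the family of $(b+\Z^n)$-convex sets, and Helly numbers are monotone under passing to subfamilies (fewer configurations can only make it easier, not harder, to certify an empty intersection by a bounded subcollection). I would spell this out in one or two lines.

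Putting the pieces together, the chain reads
\[
f(S)\ \le\ h(S)\ =\ h\bigl((b+\Z^n)\cap C\bigr)\ \le\ h(b+\Z^n)\ =\ 2^n,
\]
where the first inequality is Theorem~\ref{TH-FlessH}, the middle inequality is the Helly-number monotonicity just discussed, and the final equality is Theorem~\ref{th:h-Zn}. This yields $f(S)\le 2^n$, which is the claim.

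I do not anticipate a genuine obstacle here, since every ingredient is already proved in the excerpt; the only point requiring care is the monotonicity step, and the subtlety there is purely whether one may legitimately apply Theorem~\ref{TH-FlessH} to $S$ itself rather than to $b+\Z^n$. One should note that Theorem~\ref{TH-FlessH} is stated for an arbitrary set $S\subseteq\R^n$, so it applies directly to $S=(b+\Z^n)\cap C$ with no extra hypotheses; in particular one does not even need to know in advance that maximal $S$-free convex sets are polyhedra, as that conclusion is part of what the $f(S)\le h(S)$ bound delivers (via the general approximation argument invoked in the proof of Theorem~\ref{TH-FlessH}). Thus the proof is a short assembly of prior results rather than a new argument.
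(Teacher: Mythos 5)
Your proof is correct and follows essentially the same route as the paper's: bound $f(S)$ by $h(S)$, use monotonicity of the Helly number under intersection with a convex set, and invoke $h(b+\Z^n)=2^n$. The only (immaterial) difference is that you use the general inequality $f(S)\le h(S)$ of Theorem~\ref{TH-FlessH}, whereas the paper cites the equality $f(S)=h(S)$ of Theorem~\ref{TH-h=f} for discrete $S$; since only the inequality is needed, your version is if anything slightly more economical.
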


\begin{proof}
Recall that $h(S)\le h(b+\Z^n)$, as $S=(b+\Z^n)\cap C$ and $C$ is a convex set. Then, by Theorems \ref{TH-h=f} and \ref{th:h-Zn}, $f(S)=h(S) \le h(b+\Z^n)=2^n$.
\end{proof}

Averkov also shows an extension of this result, namely $f(S)\le 2^n$ for every $(\Z^n\times\R^p)$-convex set $S$~\cite{averkov2013maximal}. More recently, Aliev, Bassett, De Loera, Louveaux generalize Theorem~\ref{thm:DBS} in the following way~\cite{aliev2016quantitative}. Given natural numbers $n, k$, they prove the existence of a constant $c(n,k)$ (depending only on $n,k$) such that any maximal polyhedron with exactly $k$ integer points in its interior has at most $c(n,k)$ facets.

We mention that for $S=b+\Z^2$, a complete classification of maximal $S$-free convex sets in $\R^2$ has been obtained in~\cite{dw2008} by Dey and Wolsey; Cornu\'ejols and Margot~\cite{cm} give an alternate proof. The classification states that any maximal $(b+\Z^2)$-free convex set is one of five different types:
\begin{enumerate}
\item A {\em split}, i.e., the intersection of two half spaces whose corresponding hyperplanes are parallel and contain infinitely many points from $S$.
\item A {\em type 1} triangle, which is an affine unimodular transformation of $\conv\{0, 2e^1, 2e^2\}$.
\item A {\em type 2} triangle, which has a single side with multiple points from $S$ in its relative interior, and the other two sides have exactly one point from $S$ in their relative interior. Moreover, the line passing through these two points is parallel to the third side.
\item A {\em type 3} triangle, which contains exactly three points from $S$ on its boundary, one in the relative interior of each side.
\item A {\em quadrilateral} where each side has exactly one point from $S$ in its relative interior, and the four points form the translation of a fundamental parallelepiped of $\Z^2$.
\end{enumerate}

 The classification is also completely known in $\R^2$ when $S = (b+\Z^2) \cap Q$, where $Q$ is a rational polyhedron~\cite{basu-paat-lifting}. A partial classification for $\R^3$ when $S = b+\Z^3$ has been obtained in~\cite{MR2855866}, which provides a complete description of all the $S$-free tetrahedra with integral vertices (which extend the type 1 triangles in $\R^2$). However, a complete classification for $\R^3$ has not been obtained.

  When $S=(b+\Z^n)\cap Q$, where  $Q$ is a rational polyhedron, Basu et al.~\cite{bccz2} prove the following sharpening of Theorem \ref{th-poly-convex}.
\begin{theorem}\label{thm:S-free} Let $S=(b+\Z^n)\cap Q$, where  $Q$ is a rational polyhedron such that $\dim(\conv(S))=n$. A set $K\subseteq \R^n$ is a maximal $S$-free convex set if and only if one of the following holds:
\begin{enumerate}[\upshape(i)]
\item $K$ is a polyhedron such that $K\cap \conv(S)$ has nonempty interior,  $K$ does not contain any point of $S$ in its interior and there is a point of $S$ in the relative interior of every facet of $K$. The  cone $\rec(K\cap\conv(S))$ is rational and it is contained in $\lin(K)$.
\item $K$ is a half-space of $\R^n$ such that $K\cap \conv(S)$ has empty interior and the boundary of $K$ is a supporting hyperplane of $\conv(S)$.
\item $K$ is a hyperplane of $\R^n$ such that $\lin(K)\cap\rec(\conv(S))$ is not a rational polyhedron.
\end{enumerate}
\end{theorem}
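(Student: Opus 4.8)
The plan is to prove the equivalence by splitting on how $K$ sits relative to the full-dimensional set $\conv(S)$: the three alternatives correspond respectively to $K$ meeting $\intr(\conv(S))$ (case (i)), $K$ being a halfspace touching $\conv(S)$ only along its boundary (case (ii)), and $K$ being a hyperplane (case (iii)). Two preliminary facts drive everything. First, $\conv(S)$ is a polyhedron whose recession cone is \emph{rational}, with $\rec(\conv(S))=\rec(Q)$: the inclusion $\rec(\conv(S))\subseteq\rec(Q)$ is immediate, and conversely every integral generator $d$ of the rational cone $\rec(Q)$ satisfies $s+d\in S$ for all $s\in S$ (since $s+d\in b+\Z^n$ and $s+d\in Q$), so $d\in\rec(\conv(S))$. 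Second, by Theorem~\ref{th-poly-convex} we have $f(S)\le 2^n$, so every $n$-dimensional maximal $S$-free convex set is a polyhedron; this is what makes a facet-by-facet analysis possible in cases (i) and (ii).

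For the ``if'' direction I would verify maximality for each shape. In case (i) the argument mirrors the ``if'' part of Theorem~\ref{TH-maximal}(b): $K$ is $S$-free since no point of $S$ lies in $\intr(K)$, and if $K'\supsetneq K$ were $S$-free, then picking $p\in\intr(K')\setminus K$ some defining inequality $a_ix\le1$ is violated at $p$; the prescribed point of $S$ in $\relint$ of the facet $F_i$ then lies in $\intr(K')$ (it has the facet on one side and the segment toward $p$ on the other), contradicting $S$-freeness. Here the first three conditions of (i) already force maximality; the recession condition is a derived property, verified in the other direction. In case (ii) the bounding hyperplane of $K$ supports $\conv(S)$ and hence meets it in a face containing an extreme point of $\conv(S)$, which is a point of $S$; pushing the boundary strictly past $\beta$ places that point into the interior, so $K$ is maximal. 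In case (iii) the hyperplane $K$ is trivially $S$-free (empty interior), and I would use Proposition~\ref{prop:proj} to show that non-rationality of $\lin(K)\cap\rec(\conv(S))$ makes the values $\{a\cdot s:s\in S\}$ accumulate at $\beta$ from both sides, so no slab strictly containing $K$ can avoid $S$.

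For the ``only if'' direction, let $K$ be maximal $S$-free and split on $\dim(K)$. If $\dim(K)<n$, then $\aff(K)$ lies in a hyperplane, which is itself $S$-free, so maximality forces $K$ to be that hyperplane; it remains to show $\lin(K)\cap\rec(\conv(S))$ is not rational, for otherwise, via the lattice-projection description of Proposition~\ref{prop:proj}, one produces an $S$-free slab strictly containing $K$. This yields (iii). If $\dim(K)=n$, polyhedrality gives $K=\{x:a_ix\le1,\ i\in I\}$ with $|I|\le 2^n$, and I separate according to whether $\intr(K)\cap\conv(S)=\emptyset$. When it is empty, separating the open convex set $\intr(K)$ from $\conv(S)$ yields a hyperplane whose halfspace is $S$-free and contains $K$, so by maximality $K$ equals that supporting halfspace: this is (ii). When $\intr(K)\cap\conv(S)\ne\emptyset$, the facet analysis of Theorem~\ref{TH-maximal}(b) applies (relax any facet lacking a point of $S$ in its relative interior to enlarge $K$, a contradiction), giving the facet property; finally $\rec(K\cap\conv(S))=\rec(K)\cap\rec(\conv(S))$ must be contained in $\lin(K)$ (the analog of the claim $\lin(K)=\rec(K)$ in Theorem~\ref{TH-maximal}) and rational (inherited from $\rec(Q)$), yielding (i).

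The main obstacle will be case (iii), together with the recession conditions in (i), since this is exactly where the shift by $b$ and the restriction to $Q$ interact with the arithmetic of the lattice. The delicate point is to translate the arithmetic statement ``$\lin(K)\cap\rec(\conv(S))$ is (ir)rational'' into the geometric statement ``$S$ does / does not accumulate arbitrarily close to the hyperplane on both sides.'' This requires applying Proposition~\ref{prop:proj} not to all of $\Z^n$ but within the span of the recession directions lying in $\lin(K)$, so that the approximating lattice points remain inside $Q$; handling this non-discrete, lower-dimensional recession behavior cleanly, and confirming its compatibility with maximality, is the technical heart of the argument.
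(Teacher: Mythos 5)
The paper does not actually prove Theorem~\ref{thm:S-free}; it states the result and cites Basu, Conforti, Cornu\'ejols and Zambelli~\cite{bccz2}, so there is no in-paper proof to compare against. Judged on its own, your outline has the right case decomposition (full-dimensional $K$ meeting $\intr(\conv(S))$, supporting half-space, hyperplane), and several steps are sound: the identity $\rec(\conv(S))=\rec(Q)$, the observation that the first three conditions of (i) already force maximality via the facet/relative-interior argument, the appeal to $f(S)\le 2^n$ for polyhedrality, and the reduction of the lower-dimensional case to a hyperplane.

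There are, however, genuine gaps exactly where the theorem is hard. First, in the ``only if'' direction of (i) you claim $\rec(K\cap\conv(S))=\rec(K)\cap\rec(\conv(S))$ is rational because it is ``inherited from $\rec(Q)$''. That step fails as stated: $\rec(Q)$ is rational but $\rec(K)$ need not be, and the intersection of a rational cone with an arbitrary polyhedral cone is in general not rational. Rationality here is a nontrivial consequence of maximality and needs its own Dirichlet/Minkowski-type argument carried out inside $Q$. Second, you run the facet-relaxation argument before establishing the recession structure, but relaxing a facet $F$ with $\relint(F)\cap S=\emptyset$ requires the points of $S$ to stay at distance at least $\varepsilon$ from $\relint(F)$; for an unbounded facet this is false unless one has already shown $\rec(K\cap\conv(S))\subseteq\lin(K)$ together with its rationality, so that after projecting out the lineality one is in a discrete, lattice-like situation. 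In the proof of Theorem~\ref{TH-maximal} the two Claims are proved \emph{before} the facet analysis for precisely this reason; your outline inverts that order. Third, case (iii) in both directions --- that irrationality of $\lin(K)\cap\rec(\conv(S))$ forces $S$ to accumulate on both sides of the hyperplane, and that rationality yields an $S$-free slab --- is only gestured at; no mechanism is given for keeping the approximating points of $b+\Z^n$ inside $Q$, which is the entire difficulty beyond Proposition~\ref{prop:proj}, as you yourself note. A minor repair: in (ii) a supporting hyperplane of $\conv(S)$ need not contain an extreme point (the lineality space of $\conv(S)$ may be nontrivial); use instead that every nonempty face of $\conv(S)$ is the convex hull of the points of $S$ it contains.
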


\section{The pure integer model}\label{sec:pure}

 In this section, we consider  the pure integer model \eqref{def integer set}, which we rewrite here for convenience:
\begin{equation}
I_S(P):= \setcond{y \in  \Z_+^\ell} { Py \in S}.
\end{equation}
We assume throughout this section that $S =b+\Z^n$ for some $b\in\R^n\setminus\Z^n$. This case was introduced and studied by Gomory and Johnson \cite{infinite,infinite2}. In the literature, this model is frequently refereed to under the name of \emph{infinite group problem}. Recently, Yildiz and Cornu\'ejols extend the analysis to more general $S$~\cite{yildiz2015integer}, but we will not cover their work in this survey.

Although for $S=b+\Z^n$ there do exist integer valid functions that take negative values,  \emph{we only concentrate on nonnegative integer valid functions in this survey.} Some justification for the nonnegativity assumption can be given as follows. If $P$ is a rational matrix and $I_S(P)\ne \emptyset$, then $\rec(\conv( I_S(P)))=\R^\ell_+$~\cite{conforti2014integer}. Therefore every  inequality  that is essential for  a linear description of $\conv( I_S(P))$ has nonnegative coefficients. 

\subsection {Minimal integer valid functions}

Recall that an integer valid function $\pi$ for $S$ is said to be \emph{minimal} if there is no integer valid function $\pi' \neq \pi$
such that $\pi'(p) \le \pi(p)$ for all $p \in \R^n$. Notice that if  $\pi$ is a nonnegative integer valid function which is minimal, then $\pi\le 1$.
Minimal integer valid
functions for $S$ were characterized by Gomory and
Johnson~\cite{infinite}.

Recall that a function $\pi\colon \R^n \rightarrow \mathbb{R}$ is \emph{subadditive} if
$\pi(p^1 + p^2) \le \pi(p^1) + \pi(p^2)$ for all $p^1, p^2 \in \R^n$. When $S = b+\Z^n$, we say that  $\pi$  satisfies the \emph{symmetry condition} if $\pi(p) + \pi(b - p) = 1$ for all $p \in \R^n$. Finally, $\pi$ is \emph{periodic modulo
  $\Z^n$} if $\pi(p) = \pi(p + w)$ for all $w \in \Z^n$.

\begin{theorem}[Gomory and Johnson \cite{infinite}] \label{thm:minimalinteger} Let $S = b+\Z^n$ for some $b\notin\Z^n$, and let
  $\pi \colon \R^n \rightarrow \mathbb{R}$ be a nonnegative function. Then $\pi$
  is a minimal integer valid function for $S$ if and only if $\pi(w) = 0$ for
  all $w\in \Z^n$, $\pi$ is subadditive, and $\pi$ satisfies the symmetry
  condition. (These conditions imply that $\pi$ is periodic modulo $\Z^n$
  and $\pi(b+w)=1$ for every $w\in \Z^n$.)
\end{theorem}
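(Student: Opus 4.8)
The plan is to prove both implications, treating the \emph{if} direction (establishing validity and minimality from the three listed properties) as essentially routine and concentrating the real work on the symmetry condition in the \emph{only if} direction.

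For the \emph{if} direction, assume $\pi\ge 0$, $\pi(w)=0$ for $w\in\Z^n$, $\pi$ subadditive, and $\pi(p)+\pi(b-p)=1$. First I would record the stated consequences: iterating subadditivity with $\pi(w)=0$ gives $\pi(p+w)\le\pi(p)+\pi(w)=\pi(p)$ and $\pi(p)\le\pi(p+w)+\pi(-w)=\pi(p+w)$, so $\pi$ is periodic modulo $\Z^n$; evaluating symmetry at $p=b$ gives $\pi(b)=1-\pi(0)=1$, hence $\pi(b+w)=1$ by periodicity. Validity is then immediate: for feasible $y$ with $\sum_p py_p=b+w\in S$, subadditivity yields $\sum_p\pi(p)y_p\ge\pi(\sum_p py_p)=\pi(b+w)=1$. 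Minimality follows from a short comparison: if $\pi'\le\pi$ is any integer valid function, then for every $x$ the pair $\{x,b-x\}$ is feasible (their sum is $b\in S$), so $\pi'(x)+\pi'(b-x)\ge1$; combined with $\pi'(b-x)\le\pi(b-x)=1-\pi(x)$ this gives $\pi'(x)\ge\pi(x)$, whence $\pi'=\pi$.

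For the \emph{only if} direction, let $\pi$ be a nonnegative minimal integer valid function. The properties $\pi(w)=0$ ($w\in\Z^n$), subadditivity, and $\pi\le1$ each follow by exhibiting a strictly smaller valid function when they fail, exactly as in the proof of Lemma~\ref{Le:Cont-oldProperties}: to get $\pi(w)=0$, delete a column $w$ (feasibility is preserved since $w\in\Z^n$) and lower $\pi(w)$ to $0$; to get subadditivity, lower the value at $p^1+p^2$ to $\pi(p^1)+\pi(p^2)$ and split that column into copies of $p^1$ and $p^2$; to get $\pi\le1$, lower any value exceeding $1$ to $1$. Subadditivity with $\pi(\Z^n)=0$ again gives periodicity. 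The inequalities $\pi(p)+\pi(b-p)\ge1$ and $\pi(b)\ge1$ come from validity applied to the feasible solutions $\{p,b-p\}$ and $\{b\}$; with $\pi\le1$ this forces $\pi(b)=1$.

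The crux is the reverse symmetry inequality $\pi(p)+\pi(b-p)\le1$. Suppose it fails at some $a$, so $\pi(a)+\pi(b-a)>1$. For $t\ge1$ set $\delta_t:=\pi(a)+\tfrac1t\bigl(\pi(b-ta)-1\bigr)$. Since $\pi(b)=1$ and subadditivity gives $1=\pi(b)\le t\pi(a)+\pi(b-ta)$, each $\delta_t\ge0$; moreover $\delta_t>0$, for $\delta_t=0$ would mean $t\pi(a)+\pi(b-ta)=1$, and then subadditivity ($\pi(b-a)\le\pi(b-ta)+(t-1)\pi(a)$) would force $\pi(a)+\pi(b-a)\le1$, a contradiction. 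Because $0\le\pi\le1$ we have $\delta_t\to\pi(a)$ as $t\to\infty$, and $\pi(a)>0$ (else $\pi(b-a)>1$), so these positive numbers have a positive infimum $\eta:=\inf_{t\ge1}\delta_t>0$. Now define $\pi'$ by lowering $\pi$ by $\eta$ on the whole coset $a+\Z^n$ and leaving it unchanged elsewhere. A feasible solution using that coset exactly $t$ times has $\pi$-value at least $t\pi(a)+\pi(b-ta)=1+t\delta_t$ (the $t$ coset columns contribute $t\pi(a)$ by periodicity, the remainder sums to $b-ta$ and contributes at least $\pi(b-ta)$ by subadditivity), so its $\pi'$-value is at least $1+t(\delta_t-\eta)\ge1$; solutions avoiding the coset are unaffected. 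Hence $\pi'$ is valid with $\pi'\le\pi$ and $\pi'\ne\pi$, contradicting minimality. The main obstacle is precisely this step: the naive reduction to $1-\pi(b-a)$ (i.e.\ by $\delta_1$) destroys validity as soon as the coset is used two or more times, and the fix is to reduce only by the smaller amount $\eta=\inf_t\delta_t$, verifying via the subadditive lower bound on $\pi(b-ta)$ that $\eta>0$ exactly because symmetry is violated.
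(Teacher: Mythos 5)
Your proof is correct, and its overall skeleton --- deriving $\pi(w)=0$ for $w\in\Z^n$, subadditivity, $\pi\le 1$, periodicity and $\pi(p)+\pi(b-p)\ge 1$ exactly as in the paper (reusing the argument of Lemma~\ref{Le:Cont-oldProperties}), and settling the ``if'' direction via the two-column feasible solution $\{p,\,b-p\}$ --- coincides with the paper's. Where you genuinely diverge is the step you correctly identify as the crux, namely $\pi(\tilde p)+\pi(b-\tilde p)\le 1$. The paper perturbs $\pi$ \emph{multiplicatively at the single point} $\tilde p$, setting $\pi'(\tilde p)=\frac{1}{1+\delta}\pi(\tilde p)$ with $\delta=\pi(\tilde p)+\pi(b-\tilde p)-1>0$, and then splits on the multiplicity $\bar y_{\tilde p}$: when $\bar y_{\tilde p}\ge(1+\delta)/\pi(\tilde p)$ the scaled coefficient alone already contributes at least $1$, and only the bounded range needs the subadditive estimate obtained by peeling off one copy of $\tilde p$. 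You instead subtract a \emph{uniform additive constant} $\eta=\inf_{t\ge 1}\delta_t$ on the whole coset $a+\Z^n$, where $1+t\delta_t$ is the subadditive lower bound on the value of any solution using the coset $t$ times; this obliges you to prove the extra fact that $\inf_t\delta_t>0$, which you do correctly (each $\delta_t>0$ precisely because symmetry fails, and $\delta_t\to\pi(a)>0$ since $0\le\pi\le 1$), but in return the perturbed function remains periodic and every multiplicity is handled by the single inequality $1+t(\delta_t-\eta)\ge 1$ instead of a case split. Both arguments rest on the same subadditivity and periodicity estimates, so the difference lies in the shape of the perturbation rather than the mechanism: the paper's version is shorter, while yours makes explicit the quantitative slack $\delta_t$ that validity leaves at each multiplicity and avoids any case analysis on $\bar y_{\tilde p}$.
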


\begin{proof} We first prove the ``only if'' part of the statement. Assume that $\pi $ is a minimal integer valid function for $S$. We need to show the following three facts.\medskip

\noindent (a) {\em $\pi (w) = 0$ for every $w \in \Z^n$}. Define by $\pi'(p) = \pi(p)$ if $p\not\in \Z^n$ and $\pi'(w) = 0$ for all $w \in \Z^n$. If $\bar y \in \Z^\ell_+$ is a point in $I_S(P)$ for some $P$, then so is $\tilde y$ defined by $\tilde y_p = \bar y_p$ if $p \not\in \Z^n$, and  $\tilde y_p = 0$ if $p \in \Z^n$. Moreover, $\sum \pi'(p)\bar y_p = \sum \pi(p) \tilde y_p \geq 1$ since $\pi$ is valid. Therefore, $\pi'$ is valid. Minimality of $\pi$ implies $\pi'=\pi$. \medskip

\noindent (b) {\em $\pi$ is subadditive}. Let $p^1, p^2 \in \R^n$. We need to show $\pi(p^1)+\pi(p^2)\geq \pi(p^1+p^2)$. This inequality holds when $p^1=0$ or $p^2=0$ because $\pi (0) = 0$. Assume now that $p^1 \not= 0$, $p^2 \not= 0$ and $\pi(p^1)+\pi(p^2) < \pi(p^1+p^2)$. Define the function $\pi'$ as follows:
$\pi'( p^1+p^2 ) =\pi(p^1)+\pi(p^2)$ and
$\pi'(p)=\pi(p)$ for $p\ne  p^1+p^2 $.
The same argument used in Lemma \ref{Le:Cont-oldProperties} shows that $\pi'$ is valid.
\medskip

Now (a) and (b) imply that $\pi$ is periodic. This is because for any $p\in \R^n$ and $w \in \Z^n$, $\pi(p+w) \leq \pi(p) + \pi(w) = \pi(p)$ where the inequality is from (b) and the equality is from (a). Similarly, $\pi(p+w) = \pi(p+w) + \pi(-w) \geq \pi(p+w - w) = \pi(p)$.
\medskip

\noindent (c) {\em $\pi$ satisfies the symmetry condition.} Suppose there exists $\tilde p \in \R^n$ such that $\pi (\tilde p) + \pi (b-\tilde p) \not= 1$.
Since $\pi$ is valid, $\pi (\tilde p) + \pi (b- \tilde p) = 1 + \delta$ where $\delta > 0$.  Since $\pi(p)\leq 1$ for all $p\in\R^n$  for any minimal function $\pi$, it follows that $\pi(\tilde p)>0$. Define the function $\pi'$ by
$$\pi'(p) := \left\{ \begin{array}{ll}
\frac 1{1+\delta} \pi(\tilde p) &
\mbox{if } p = \tilde p, \\
\pi(p) & \mbox{if } p \not= \tilde p. \end{array} \right.$$
We show that $\pi'$ is valid. Consider any $\bar y  \in I_S(P)$ for some matrix $P$ containing column $\tilde p$. Note that
$$\sum \pi'(p)\bar y_p=\mathop{\sum}_{p\neq \tilde p} \pi(p)\bar y_p+\frac{1}{1+\delta}\pi(\tilde p)\bar y_{\tilde p}.$$

If $\bar y_{\tilde p}=0$ then $\sum\pi'(p)\bar y_p=\sum \pi(p)\bar y_p\geq 1$ because $\pi$ is valid. If $\bar y_{\tilde p}\geq (1+\delta)/\pi(\tilde p)$ then $\sum\pi'(p)\bar y_p\geq 1$.
Thus we can assume that $1\leq\bar y_{\tilde p} < (1+\delta)/\pi(\tilde p)$.

Observe that $\mathop{\sum}_{p\neq \tilde p} \pi(p)\bar y_p+\pi(\tilde p) (\bar y_{\tilde p}-1) \geq \mathop{\sum}_{p\neq \tilde p} \pi(p \bar y_p)+\pi(\tilde p (\bar y_{\tilde p}-1)) \geq  \pi(\mathop{\sum}_{p\neq \tilde p} p \bar y_p + \tilde p (\bar y_{\tilde p}-1)) =  \pi(b -\tilde p)$, where the inequalities
follow by the subadditivity of $\pi$ and the equality follows as $\pi$ is periodic modulo $\Z^n$ and $\mathop{\sum}_{p\neq \tilde p} p \bar y_p + \tilde p \bar y_{\tilde p} \in S$. Therefore
\begin{eqnarray*}
\sum\pi'(p)\bar y_p &=&\mathop{\sum}_{p\neq \tilde p} \pi(p)\bar y_p+\pi(\tilde p) (\bar y_{\tilde p}-1)+\pi(\tilde p)-\frac{\delta}{1+\delta}\pi(\tilde p)\bar y_{\tilde p}\\
&\geq & \pi(b-\tilde p)+\pi(\tilde p)-\delta \\
&= & 1+\delta -\delta=1.
\end{eqnarray*}
This shows that $\pi'$ is valid, contradicting the minimality of $\pi$.
\smallskip

We now prove the ``if'' part of the statement. Assume that $\pi (w) = 0$ for all $w \in \Z^n$, $\pi$ is subadditive, and satisfies the symmetry condition. As noted earlier, the first two conditions imply that $\pi$ is periodic.

We first show that $\pi$ is valid. The symmetry condition implies $\pi (0) + \pi (b) =1$. Since $\pi (0) = 0$, we have $\pi (b) =1$. Let $P$  and $\bar y$ be such that $P\bar y = b + w$ for some $w\in \Z^n$. We have that $\sum \pi(p) \bar y_p \geq \pi( \sum p \bar y_p ) = \pi(b+w)=\pi(b) = 1$, where the inequality comes from subadditivity and the second to last equality comes from periodicity. Thus $\pi$ is valid.

To show that $\pi$ is  minimal, suppose by contradiction that there exists an integer valid function $\pi' \leq \pi$ such that $\pi' (\tilde p) < \pi ( \tilde p)$ for some $\tilde p \in \R^n$. Then
$\pi (\tilde p) + \pi (b -\tilde p ) =1$ implies $\pi' (\tilde p) + \pi' (b -\tilde p ) < 1$, contradicting the validity of $\pi'$.
\end{proof}

The above proof follows the one given in~\cite{corner_survey}.

\subsection{Extreme functions: Techniques for proving extremality}\label{s:roadmap}

When describing a full-dimensional polyhedron $K$, one is only interested in identifying the facet-defining inequalities of $K$, as all other valid inequalities can be expressed as convex combinations of facet-defining inequalities. In other words, the facet-defining inequalities form extreme rays of the cone of valid inequalities for $K$. In our context, the analogous notion is that of an {\em extreme function}. An integer valid function~$\pi$ is \emph{extreme}
for $S$ if it cannot be written as a proper convex combination of two other
integer valid functions for $S$, i.e., if $\pi = \tfrac12(\pi^1 + \pi^2)$ for integer valid functions $\pi^1, \pi^2$ implies  $\pi = \pi^1 = \pi^2$. Extreme functions are easily seen to be minimal.

\begin{remark} For the continuous model, extreme functions are defined in the same way and  were characterized by Cornu\'ejols and  Margot \cite{cm} for the case $n=2$. See also~\cite[Theorem 1.5]{bccz} for a result in general dimension $n$, which states that a valid function for the continuous model is extreme if and only if a certain restriction of this function gives a facet defining inequality for a well-defined polyhedron. \end{remark}

The following lemma will be useful in analyzing extreme functions.

\begin{lemma}
\label{lem:tightness}\label{lem:minimality-of-pi1-pi2}\label{lemma:tight-implies-tight}\label{Theorem:functionContinuous} \label{lem:lipschitz}
Let $S = b+\Z^n$ for some $b\notin\Z^n$.  Let  $\pi$ be a nonnegative minimal integer valid function for $S$ and suppose $\pi^1$ and $\pi^2$ are nonnegative integer valid functions such that $\pi = \frac12(\pi^1+\pi^2)$.  Then the following hold:
  \begin{enumerate}[\upshape(i)]
  \item $\pi^1,\pi^2$ are minimal~\cite{infinite}.
  \item 

  Let the \emph{additivity domain} of~$\pi$ be:
\begin{equation}
\label{eq:Epi}
  E(\pi) := \setcond{(x, y)} { \pi(x) + \pi(y) =\pi(x + y)}.
\end{equation}
  Then $E(\pi) \subseteq E(\pi^1) \cap E(\pi^2)$~\cite{infinite}.
  \item Suppose there exists a real number $M$ such that  $\limsup_{h \to 0^+}  \frac{\pi(h \rx)}{h} \leq M$ for all $\rx \in \R^n$ such that $\lVert r \rVert = 1$. Then $\pi$ is Lipschitz-continuous.  Furthermore, this condition holds for $\pi^1$ and $\pi^2$, and $\pi^1, \pi^2$ are Lipschitz-continuous~{\cite[Theorem 2.9]{basu-hildebrand-koeppe:equivariant}}.
  \end{enumerate}
\end{lemma}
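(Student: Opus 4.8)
The plan is to handle the three parts in order, using throughout Theorem~\ref{thm:minimalinteger}, which guarantees that a nonnegative minimal integer valid function is subadditive, vanishes on $\Z^n$ (in particular at the origin), and is bounded above by $1$. The starting observation for part (i) is that the set of integer valid functions is convex: if $\pi^1,\pi^2$ are valid, then on every $I_S(P)$ one has $\sum \tfrac12(\pi^1+\pi^2)(p)\,y_p \ge \tfrac12+\tfrac12 = 1$. Now I would argue by contradiction. If $\pi^1$ were not minimal, it would be dominated by a minimal integer valid function $\bar\pi^1 \le \pi^1$ with $\bar\pi^1 \ne \pi^1$. Then $\bar\pi := \tfrac12(\bar\pi^1 + \pi^2)$ is valid by convexity, satisfies $\bar\pi \le \tfrac12(\pi^1+\pi^2)=\pi$, and is strictly smaller than $\pi$ at any point where $\bar\pi^1 < \pi^1$. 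This contradicts the minimality of $\pi$, so $\pi^1$, and symmetrically $\pi^2$, must be minimal.

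For part (ii), I would invoke part (i) together with Theorem~\ref{thm:minimalinteger} to conclude that $\pi^1,\pi^2$ are subadditive, so the subadditivity slacks $s^i(x,y) := \pi^i(x)+\pi^i(y)-\pi^i(x+y)$ are nonnegative. Since $\pi = \tfrac12(\pi^1+\pi^2)$, the slack of $\pi$ at $(x,y)$ equals $\tfrac12\bigl(s^1(x,y)+s^2(x,y)\bigr)$. If $(x,y)\in E(\pi)$, this quantity is zero, and being a sum of two nonnegative terms it forces $s^1(x,y)=s^2(x,y)=0$; that is, $(x,y)\in E(\pi^1)\cap E(\pi^2)$.

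Part (iii) is where the real work lies. To show $\pi$ is Lipschitz I would first reduce to a one-sided estimate: by subadditivity, $\pi(x)-\pi(y)\le\pi(x-y)$ and $\pi(y)-\pi(x)\le\pi(y-x)$, so it suffices to prove $\pi(d)\le L\lVert d\rVert$ for all $d\in\R^n$. The main obstacle is that the hypothesis $\limsup_{h\to 0^+}\pi(hr)/h\le M$ is only pointwise in the direction $r$, whereas I need a bound uniform in $r$. I would resolve this with subadditivity and a compactness/finiteness trick: fix a polytope $P=\conv\{v_1,\dots,v_m\}$ with $0\in\intr(P)$ (say the cross-polytope $\conv\{\pm e_i\}$) and choose $\rho>0$ with $B(0,\rho)\subseteq P$. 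For each $j$ the hypothesis provides a threshold below which $\pi(t\,v_j)\le (M+1)\lVert v_j\rVert\,t$; taking the minimum over the finitely many $j$ gives a single radius valid for all reference directions simultaneously. For small $d$, write $d=\sum_j\lambda_j v_j$ with $\lambda_j\ge 0$ and $\sum_j\lambda_j=\gamma_P(d)\le\lVert d\rVert/\rho$; then subadditivity yields $\pi(d)\le\sum_j\pi(\lambda_j v_j)\le (M+1)\bigl(\max_j\lVert v_j\rVert\bigr)\sum_j\lambda_j\le L\lVert d\rVert$. This is the local bound, which I would globalize through $\pi(d)\le N\,\pi(d/N)$, choosing $N$ large enough that $d/N$ lies in the local range, so that $\pi(d)\le L\lVert d\rVert$ holds for every $d$. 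Together with the one-sided estimate this proves $\pi$ is Lipschitz.

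Finally, for the transfer to $\pi^1,\pi^2$, I would use nonnegativity: $\pi^1=2\pi-\pi^2\le 2\pi$ since $\pi^2\ge 0$, whence $\limsup_{h\to 0^+}\pi^1(hr)/h\le 2M$, and symmetrically for $\pi^2$. By part (i) these functions are nonnegative and minimal, hence subadditive with $\pi^i(0)=0$, so the local-to-global Lipschitz argument applies verbatim (with constant governed by $2M$), giving Lipschitz-continuity of $\pi^1$ and $\pi^2$. I expect the uniformity step in part (iii) to be the only delicate point; parts (i), (ii), and the final transfer are short once the convexity of valid functions and the slack decomposition are in place.
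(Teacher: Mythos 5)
Your proposal is correct. Note that the paper itself gives no proof of this lemma---it is stated with citations to \cite{infinite} for (i)--(ii) and to \cite[Theorem 2.9]{basu-hildebrand-koeppe:equivariant} for (iii)---so there is no in-paper argument to diverge from; your arguments are the standard ones from those sources (the convexity/perturbation argument for (i), the nonnegative-slack decomposition for (ii), and for (iii) the passage from the pointwise $\limsup$ bound to a uniform local bound via subadditivity over the finitely many vertices of a polytope containing $0$ in its interior, followed by the scaling $\pi(d)\le N\pi(d/N)$ and the transfer $\pi^i\le 2\pi$). The only delicate point, the uniformization in (iii), is handled correctly.
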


To prove that a function $\pi$ is extreme, the main idea is to establish that $E(\pi) \subseteq E(\pi')$ implies $\pi = \pi'$ for every minimal integer valid function $\pi'$. Then, starting from the assumption $\pi = \frac12(\pi^1 + \pi^2)$, by Lemma~\ref{lem:tightness} (ii) $E(\pi) \subseteq E(\pi^1)$ and $E(\pi) \subseteq E(\pi^2)$, and therefore $\pi = \pi^1 = \pi^2$.

The main tools for establishing that $E(\pi) \subseteq E(\pi')$ implies $\pi = \pi'$ for every minimal integer valid function $\pi'$ are results that are collectively called {\em interval lemmas} (after the one-dimensional interval lemma of Gomory and Johnson~\cite{tspace}), and are presented in the next subsection.

We illustrate this framework for proving extremality by outlining a proof of the {\em $(n+1)$-slope theorem} of Basu, Hildebrand, K\"oppe and Molinaro~\cite{basu-hildebrand-koeppe-molinaro:k+1-slope}, which is one of the most general sufficient conditions for extremality of minimal integer valid functions.

\subsubsection{Regular solutions to Cauchy's functional equation}\label{s:real-analysis}

As mentioned above, the key to establishing extremality of a function $\pi$ is to prove that $E(\pi) \subseteq E(\pi')$ implies $\pi = \pi'$ for every minimal integer valid function $\pi'$. The first step is to show that $E(\pi) \subseteq E(\pi')$ implies that $\pi'$ has an affine linear structure whenever $\pi$ has such structure. For this purpose, we consider full-dimensional convex subsets $F \subseteq E(\pi') \subseteq \R^n \times \R^n$; therefore, $\pi(u) + \pi(v) = \pi(u+v)$ for all $(u,v) \in F$. This leads to the study of functions $\theta\colon\R^n \to \R$ satisfying:
\begin{equation}\label{eq:cauchy}
  \theta(u)+\theta(v) = \theta(u+v),\quad (u,v) \in F
\end{equation}
for a given subset $F\subseteq \R^n\times \R^n$. This equation is known as the \emph{(additive) Cauchy functional equation}.
\smallskip

The Cauchy functional equation is classically studied for
functions $\theta\colon\R\to\R$, where the additivity domain~$F$ is the entire
space~$\R\times\R$ (see, e.g., \cite{aczel1989functional}).
In addition to the \emph{regular solutions}, which are
the (homogeneous) linear functions $\theta(x) = cx$ for any $c\in\R$, there exist certain
pathological solutions, which are highly discontinuous~\cite[Chapter 2, Lemma 3]{aczel1989functional}. In order to rule out
these solutions, one imposes a regularity hypothesis.  Various such regularity
hypotheses have been proposed in the literature; for example, it is sufficient
to assume that the function~$\theta$ is bounded on every bounded interval
\cite[Chapter 2, Theorem 8]{aczel1989functional}. 
\smallskip

We now return to functions $\theta\colon \R^n\to \R$  and recall the notion of affine functions over a domain.
\begin{definition}
Let $U \subseteq \R^n$. We say that $\theta\colon U \to \R$ is {\em affine} over $U$ (with gradient $c$) if there exists $c \in \R^n$ such that
for any $u_1, u_2 \in U$ we have $$\theta(u_2) - \theta(u_1) =  c (u_2 - u_1).$$
\end{definition}
Equivalently, there exists $b\in \R$ such that  $\theta(u)=cu+b$ for every $u\in U$.

We define three projection operators on $\R^n \times \R^n$. For any subset $F\subseteq \R^n \times \R^n$, define $p_1(F) = \{ u \in \R^n \colon (u,v) \in F\}$, $p_2(F) = \{ v \in \R^n \colon (u,v) \in F\}$, and $p_3(F) = \{ u + v \colon (u,v) \in F\}$.


\begin{lemma}[Convex additivity domain lemma]{~\cite[Theorem 2.11]{bhk-IPCOext}}\label{lem:projection_interval_lemma_fulldim}
  Let $\theta \colon \R^n \to \R$ be a bounded function.
  Let $F \subseteq \R^n \times \R^n$ be a full-dimensional convex set
  such that $\theta(u) + \theta(v) = \theta(u+v)$ for all $(u, v) \in F$.
  Then there exists a vector $c\in \R^n$ such that $\theta$ is
  affine with the same gradient $c$ over $\intr(p_1(F))$,
  $\intr(p_2(F))$ and $\intr(p_3(F))$, respectively.
\end{lemma}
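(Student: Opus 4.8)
The plan is to localize the additivity relation to a product of open boxes inside $\intr(F)$, to extract a single gradient using the one-dimensional Cauchy machinery from the previous subsection together with the boundedness hypothesis, and then to propagate the resulting affine structure across the connected interiors of the three projections.

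First I would fix a point $(\bar u,\bar v)\in\intr(F)$. Since $F$ is full-dimensional and convex, $\intr(F)$ is open in $\R^n\times\R^n$, so I can choose open boxes $U\ni\bar u$ and $V\ni\bar v$ in $\R^n$ with $U\times V\subseteq\intr(F)$; on this box the hypothesis reads $\theta(u)+\theta(v)=\theta(u+v)$ for all $u\in U$, $v\in V$. The crucial observation is an increment identity: for a fixed small $\delta\in\R^n$ with $u,u+\delta\in U$ and $v,v-\delta\in V$, comparing the relation at $(u+\delta,v-\delta)$ with the one at $(u,v)$—both have right-hand side $\theta(u+v)$—gives $\theta(u+\delta)-\theta(u)=\theta(v)-\theta(v-\delta)$. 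The left-hand side does not depend on $v$ and the right-hand side does not depend on $u$, so both sides equal a constant $\alpha(\delta)$ depending only on $\delta$. Thus the increment $\theta(x+\delta)-\theta(x)=\alpha(\delta)$ is position-independent on a slightly shrunk box, for all sufficiently small $\delta$.

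Next I would analyse $\alpha$. Telescoping increments shows $\alpha$ is additive, $\alpha(\delta_1+\delta_2)=\alpha(\delta_1)+\alpha(\delta_2)$ for small $\delta_1,\delta_2$, and $\alpha$ is bounded because $\theta$ is (indeed $|\alpha(\delta)|\le 2\sup|\theta|$). Applying the theory of regular solutions to Cauchy's functional equation coordinatewise—a bounded additive function is linear—produces a vector $c\in\R^n$ with $\alpha(\delta)=c\delta$; equivalently $\theta(x)-cx$ is locally constant, i.e. $\theta$ is affine with gradient $c$ near $\bar u$, and, since the same $\alpha$ controls the $v$-increments, also near $\bar v$ with the same gradient $c$. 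To globalize, I use that $\intr(p_1(F))=p_1(\intr(F))$ is convex, hence connected, and that the argument above can be started at every one of its points; a function whose local gradient is the fixed constant $c$ throughout a connected open set is globally affine there, so $\theta(u)=cu+b_1$ on all of $\intr(p_1(F))$, and symmetrically $\theta(v)=cv+b_2$ on $\intr(p_2(F))$ with the same $c$. Finally, for $w\in\intr(p_3(F))=p_3(\intr(F))$ I write $w=u+v$ with $(u,v)\in\intr(F)$, whence $\theta(w)=\theta(u)+\theta(v)=cw+(b_1+b_2)$, giving affineness with gradient $c$ on $\intr(p_3(F))$ as well.

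The genuinely delicate points are the bookkeeping rather than the algebra: I must check that the increment identity holds on a box that is uniform as $\delta$ ranges over a ball about $0$, so that $\alpha$ is defined and additive on a neighbourhood of the origin, and I must justify the convex-geometric facts $\intr(p_i(F))=p_i(\intr(F))$ that both let me start the local argument at every interior point of each projection and recover the decomposition $w=u+v$. The main obstacle is gluing the local affine pieces into one global affine function: this relies on connectedness of the convex sets $\intr(p_i(F))$ and, crucially, on the gradient $c$ being the single constant fixed once and for all, which is exactly what the position-independence of $\alpha$ guarantees.
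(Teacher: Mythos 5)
The survey does not actually prove this lemma: it is stated with a citation to \cite[Theorem 2.11]{bhk-IPCOext}, and the surrounding text only sets up the Cauchy-equation machinery. So there is no in-paper proof to compare against; judged on its own, your argument is correct and is the natural proof that the surrounding discussion gestures at. Your key steps all hold up: the cancellation $\theta(u+\delta)-\theta(u)=\theta(v)-\theta(v-\delta)$ on a product of boxes inside $\intr(F)$ does yield a position-independent increment function $\alpha$; $\alpha$ is locally additive and bounded by $2\sup|\theta|$, so the regularity theory for Cauchy's equation gives $\alpha(\delta)=c\delta$; the identities $\intr(p_i(F))=p_i(\intr(F))$ follow from the fact that relative interiors of convex sets commute with linear images (and $p_3$ is a surjective linear map, hence open); and the $p_3$ conclusion is immediate once the first two projections are handled.

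The one place where your write-up is thinner than it should be is the globalization. The ``position-independence of $\alpha$'' only fixes the gradient within the single box $U\times V$; starting the local argument at a different point $(\bar u',\bar v')\in\intr(F)$ a priori produces a different gradient $c'$. What actually forces $c'=c$ is the standard clopen argument: the set of points of the connected open set $\intr(p_1(F))$ near which $\theta$ is affine with gradient exactly $c$ is open by definition and closed because any two overlapping affine neighbourhoods must share their gradient; hence it is everything. The agreement of the gradient on $\intr(p_2(F))$ with that on $\intr(p_1(F))$ then comes from the fact that each local computation at $(\bar u',\bar v')$ ties the $u$-side and $v$-side gradients to the \emph{same} $\alpha$. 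You name connectedness as the mechanism and flag the gluing as the main obstacle, so I read this as an acknowledged bookkeeping step rather than a gap; spelling out the clopen argument would make the proof complete.
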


The special case of the above result in which $F = U\times V$ is the cartesian product of two closed proper intervals $U,V \subseteq \R$ is the so-called {\em interval lemma} stated in~\cite{tspace} (see also \cite[Lemma 6.26]{conforti2014integer}). Early extensions to this original result were made in Dey and Richard's work~\cite[Proposition 23]{dey3} and Dey et al's work~\cite[Proposition 3, Corollary 1]{dey1}.

\subsection{Sufficient conditions for extremality: the $(n+1)$-slope theorem}\label{s:sufficient-cond}

One of the most celebrated results in the study of extreme functions is the
so-called {\em Gomory-Johnson 2-slope theorem}~\cite{infinite} (see also \cite[Theorem 6.27]{conforti2014integer}), which states
that for $n=1$, if a continuous piecewise linear minimal valid integer function has only 2 values for
the derivative wherever it exists (2 slopes), then the function is
extreme. This was generalized to $n=2$ by Cornu\'ejols and
Molinaro~\cite{3slope}, and to general $n$ by Basu, Hildebrand, K\"oppe and
Molinaro~\cite{basu-hildebrand-koeppe-molinaro:k+1-slope}. We present the general $(n+1)$-slope theorem here, along with the main ingredients of its proof.

We introduce the definition of \emph{polyhedral complex}, a classical notion from polyhedral geometry~\cite[Chapter 5]{ziegler}, to formalize the notion of piecewise linear functions over $\R^n$ for $n \geq 2$. 

\begin{definition}
\label{def:polyhedralComplex}
A  polyhedral complex in $\R^n$ is a collection $\mathcal P$ of polyhedra in $\R^n$ such that:
\begin{enumerate}[\upshape(i)]
\item if $I \in \mathcal P$, then all faces of $I$ are in $\mathcal P$,
\item the intersection $I \cap J$ of two polyhedra $I,J \in \mathcal P$ is a face of both $I$ and $J$,
\item any compact subset of $\R^n$ intersects only finitely many polyhedra in $\mathcal P$. 
\end{enumerate}
\end{definition}
A polyhedron $I$ from $\mathcal P$ is called a {\em face} of the complex.
 We call the maximal faces of $\mathcal P$ the {\em cells}
of~$\mathcal P$. A function $\pi\colon \R^n\to \R$ is {\em  continuous piecewise linear} if there exists a  polyhedral complex $\mathcal P$ such that $\cup_{I \in \mathcal P}I=\R^n$ and $\pi$ is an  affine  function over each of the cells of~$\mathcal P$ (thus automatically imposing continuity for the function).

We next define  genuinely $n$-dimensional functions on $\R^n$  and then indicate that for the analysis of minimal and extreme functions, it suffices to study genuinely $n$-dimensional functions. This notion was first introduced in~\cite{basu-hildebrand-koeppe-molinaro:k+1-slope}.


\begin{definition}\label{def:genk}
A function $\theta\colon \R^n \rightarrow \R$ is {\em genuinely $n$-dimensional}
if there does not exist a function $\varphi \colon \R^{n-1} \rightarrow \R$ and a linear
map $f \colon \R^n \rightarrow \R^{n-1}$ such that $\theta = \varphi\circ f$.
\end{definition}

\begin{remark}[Dimension reduction;  {\cite[Proposition B.9, Remark B.10]{bhk-IPCOext}}]
\label{remark:dimension-reduction}

The extremality/minimality question for $\pi$ that is not genuinely $n$-dimensional can be reduced to the same question for a lower-dimensional genuinely $\ell$-dimensional function (so $\ell < n$.) When $\mathcal P$ is a rational polyhedral complex, this reduction can be done algorithmically. Thus,  we can assume without loss of generality that a function is genuinely $n$-dimensional.
\end{remark}

We can now state the $(n+1)$-slope theorem.

\begin{theorem}[{\cite[Theorem 1.7]{basu-hildebrand-koeppe-molinaro:k+1-slope}}]\label{thm:k+1slope}
Let $\pi\colon \R^n \to \R$ be a  nonnegative minimal integer valid function that is continuous piecewise linear and genuinely $n$-dimensional with at most $n+1$ slopes, i.e., at most $n+1$ different values for the gradient of $\pi$ where it exists. Then $\pi$ is extreme and has exactly $n+1$ slopes.
\end{theorem}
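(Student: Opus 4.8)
The plan is to follow the framework set up after Lemma~\ref{lem:tightness}: to prove that $\pi$ is extreme it suffices to show that $E(\pi)\subseteq E(\pi')$ implies $\pi=\pi'$ for every minimal integer valid function $\pi'$. Indeed, writing $\pi=\tfrac12(\pi^1+\pi^2)$ with $\pi^1,\pi^2$ integer valid, Lemma~\ref{lem:tightness}(i)--(ii) gives that $\pi^1,\pi^2$ are minimal and $E(\pi)\subseteq E(\pi^1)\cap E(\pi^2)$, and since $\pi$ is continuous piecewise linear it is Lipschitz, so Lemma~\ref{lem:tightness}(iii) makes $\pi^1,\pi^2$ Lipschitz as well; hence the reduction would immediately yield $\pi=\pi^1=\pi^2$. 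Before this, I would dispatch the easy ``exactly $n+1$ slopes'' claim: if $\pi$ had $m\le n$ distinct gradients $g_1,\dots,g_m$, their affine hull would have dimension at most $n-1$, so some $w\ne 0$ satisfies $(g_i-g_1)\cdot w=0$ for all $i$; then the directional derivative of $\pi$ along $w$ equals the constant $g_1\cdot w$ wherever it exists, and this constant must vanish since $\pi$ is bounded (being periodic by Theorem~\ref{thm:minimalinteger} and continuous), forcing $\pi$ to be invariant along $w$ and hence to factor through the projection $\R^n\to\R^n/\langle w\rangle\cong\R^{n-1}$, contradicting Definition~\ref{def:genk}. Thus there are exactly $n+1$ slopes and the $n$ differences $g_2-g_1,\dots,g_{n+1}-g_1$ form a basis of $\R^n$.

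Now fix a minimal integer valid $\pi'$ with $E(\pi)\subseteq E(\pi')$; by Lemma~\ref{lem:tightness}(iii) it is Lipschitz, in particular bounded. The first main step is to transfer the piecewise-affine structure of $\pi$ to $\pi'$. Let $\mathcal P$ be a polyhedral complex on which $\pi$ is affine, and for each slope $g_i$ let $R_i$ denote the region where $\nabla\pi=g_i$. Consider a full-dimensional convex set $F\subseteq E(\pi)$. Applying Lemma~\ref{lem:projection_interval_lemma_fulldim} to the bounded function $\pi$ itself shows that $\pi$ is affine, with a single gradient, over $\intr(p_1(F))$, $\intr(p_2(F))$ and $\intr(p_3(F))$, which forces these three projections into slope regions all carrying one common gradient. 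Applying the same lemma to $\pi'$ over the same $F$ then shows that $\pi'$ is affine, with one common gradient, over those three patches. Covering each $R_i$ by such sets $F$ and chaining overlapping patches yields that $\pi'$ is affine on $R_i$ with a single gradient $g_i'$.

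It remains to show $g_i'=g_i$ for every $i$. Continuity of $\pi$ and of $\pi'$ across the facet shared by adjacent regions $R_i,R_j$ forces both $g_i-g_j$ and $g_i'-g_j'$ to be normal to that facet; more importantly, the lower-dimensional additive faces of $E(\pi)$ (where the sum $u+v$ crosses a cell boundary) still lie in $E(\pi')$, and combined with the affineness of $\pi'$ on the slope regions they yield, upon differentiating along those faces, exactly the same linear relations among the $g_i'$ as hold among the $g_i$. Because the differences $g_2-g_1,\dots,g_{n+1}-g_1$ form a basis of $\R^n$, this system pins down the $g_i'$ uniquely, giving $g_i'=g_i$; together with $\pi(0)=\pi'(0)=0$ and the connectivity of $\R^n$, continuity then propagates $\pi=\pi'$ everywhere, completing the extremality proof. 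I expect the genuine obstacle to be this matching step: one must verify that the additive faces are abundant and ``connected'' enough, both to endow each $R_i$ with a single well-defined gradient $g_i'$ and to generate a full-rank system of gradient relations, and it is precisely here that the hypotheses of exactly $n+1$ slopes and genuine $n$-dimensionality (through the basis property) are indispensable. The rigorous covering and chaining argument is the technical heart, carried out in~\cite{basu-hildebrand-koeppe-molinaro:k+1-slope} via the interval-lemma machinery of Section~\ref{s:real-analysis}.
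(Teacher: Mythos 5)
Your overall architecture matches the paper's: reduce extremality to showing that $E(\pi)\subseteq E(\pi')$ forces $\pi=\pi'$ for minimal $\pi'$, transfer the affine structure of $\pi$ to $\pi'$ via Lemma~\ref{lem:projection_interval_lemma_fulldim}, match gradients, and integrate along segments from the origin. Your argument for ``exactly $n+1$ slopes'' is sound. The genuine gap is in the gradient-matching step, which as stated would fail. Every relation you propose to extract --- normality of $g_i'-g_j'$ to a shared facet, or equality of directional derivatives obtained by differentiating along additive faces --- is a \emph{homogeneous} linear relation among the $g_i'$ (indeed, among their differences). No collection of such relations can pin down the gradients uniquely: the family $g_i'=\lambda g_i+c$, for any scalar $\lambda$ and any vector $c\in\R^n$, satisfies all of them, so the claimed uniqueness is impossible regardless of how many independent directions you accumulate or of the basis property of $g_2-g_1,\dots,g_{n+1}-g_1$. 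The paper's system \eqref{eq:linear-system} contains a second, \emph{inhomogeneous} block $\sum_{j}(\mu_{ij}\a^i)\gs^j=1$, obtained by decomposing the segments $[\0,\a^i]$ for $n+1$ affinely independent points $\a^i\in b+\Z^n$ and using that any minimal integer valid function equals $1$ there (Theorem~\ref{thm:minimalinteger}). That block is what eliminates the scaling and translation freedom; it is absent from your proposal, and without it the approach does not close.

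Two further places where you defer to the reference for what is actually the substance of the proof. First, ``covering each $R_i$ by sets $F$ and chaining overlapping patches'' cannot by itself yield a single gradient $\gt^i$ on all of $R_i$, because the region of a given slope is in general a \emph{disconnected} union of cells. The paper's device is that each gradient value is attained on a cell $C_i$ containing the origin, and for any cell $I\in\mathcal P_i$ the set $F=\{(u,v)\colon u\in C_i,\ v\in I,\ u+v\in I\}$ lies in $E(\pi)$ (using $0\in C_i$ and $\pi(0)=0$); Lemma~\ref{lem:projection_interval_lemma_fulldim} then forces every such $I$ to share its $\pi^1$-gradient with the fixed hub $C_i$. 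Second, the directions used in the matching step must each lie in $n$ cells of pairwise different slopes \emph{and} have every $n$-subset linearly independent (Lemma~\ref{lemma:kkmDirections}); this configuration is produced by the KKM lemma and does not obviously follow from ``adjacent regions share a facet.'' You rightly flag the matching step as the technical heart, but the missing inhomogeneous constraints are not a technicality: they are the reason the system has a unique solution.
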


 We outline the proof of Theorem \ref{thm:k+1slope}. Let $\pi$ be a continuous piecewise linear minimal integer valid function that is genuinely $n$-dimensional with at most $n+1$ slopes. Let $\mathcal P$ be the polyhedral complex associated with $\pi$.

\begin{enumerate}
\item Subadditivity and the  genuine $n$-dimensionality of $\pi$ imply that $\pi$ has exactly $n+1$ gradient values $\gp^1, \ldots, \gp^{n+1} \in \R^n$. This is a relatively easy step, see Lemma 2.11 in~\cite{basu-hildebrand-koeppe-molinaro:k+1-slope}.
\item (Compatibility step) Let $\pi^1, \pi^2$ be valid functions such that $\pi = \frac12(\pi^1 + \pi^2)$.
 For each $i = 1, \ldots, n+1$, define $\mathcal P_i\subseteq \mathcal P$ to be the polyhedral complex formed by all the cells (and their faces) of $\mathcal P$ where the gradient of $\pi$ is $\gp^i$. Show that there exist $\gt^1, \ldots, \gt^{n+1}$ such that $\pi^1$ is affine over every cell in $\mathcal P_i$ with gradient $\gt^i$.
\item (Gradient matching step) We then prove properties  of genuinely $n$-dimensional functions with $n+1$ slopes that lead to a system of  equations that are satisfied by the coefficients of $\gp^1, \ldots, \gp^{n+1}$ and $\gt^1, \ldots, \gt^{n+1}$. Then, it is established that this system of equations has a unique solution, and thus, $\gp^i = \gt^i$ for every $i=1, \ldots, n+1$.
\item For every $r \in \R^n$ there exist $\mu_1, \mu_2, \ldots, \mu_{n+1}$ such that $\mu_i$ is the fraction of the segment $[\0,r]$ that lies in $\mathcal P_i$. Thus, $$\pi(r) = \pi(\0) + \sum_{i=1}^{n + 1} \mu_i (\gp^i r) = \pi^1(\0) + \sum_{i=1}^{n+1} \mu_i (\gt^i r) = \pi^1(r).$$ This proves that $\pi = \pi^1$ and thus, $\pi = \pi^1 = \pi^2$, concluding the proof of Theorem~\ref{thm:k+1slope}.
\end{enumerate}

%
%
%
%

We now elaborate on Steps 2. and 3.

\paragraph{Compatibility step.} The analysis of step 1 also shows that for every $i=1, \ldots, n+1$, there exist $C_i \in \mathcal P_i$ such that $\0 \in C_i$. This means that for every gradient value, there is a cell containing the origin with that gradient. Fix an arbitrary $i \in \{1, \ldots, n+1\}$ and consider any cell $I \in \mathcal P_i$. Let $F = \{(u,v) \in \R^n \times \R^n\colon u \in C_i, v \in I, u+v \in I\}$. Then $F \subseteq E(\pi)$ since for $(u,v)$ such that $u \in C_i, v \in I, u+v \in I$, $\pi(\u) + \pi(v) - \pi(\u + v) = (\gp^i \u) + (\gp^i v + \delta) - (\gp^i(\u+v) + \delta) = 0$ for some $\delta\in\R$; here, we use the fact that $\pi$ is affine over $C_i$ and $I$ with gradient $g^i$, and the facts that $0 \in C_i$ and $\pi(0) = 0$. By Lemma~\ref{lem:tightness} (ii), $F \subseteq E(\pi^1)$; by Lemma~\ref{lem:tightness} (iii), $\pi^1$ is continuous (because $\pi$ satisfies the hypothesis of Lemma~\ref{lem:tightness} (iii) as $\pi$ is continuous piecewise linear). By Lemma~\ref{lem:projection_interval_lemma_fulldim}, applied to $F$ and $\theta = \pi^1$, and continuity of $\pi^1$, we obtain that $\pi^1$ is affine on $C_i = p_1(F)$ and $I = p_2(F) = p_3(F)$ with the same gradient. Since the choice of $I$ was arbitrary, this establishes that for every cell $I\in\mathcal P_i$, $\pi^1$ is affine with the same gradient; this is precisely the desired $\gt^i$.

\paragraph{Gradient matching step.} The system of equations of step 3 has two sets of constraints, the first of which follows from the condition that $\pi(b + \w) = 1$ for every $\w \in \Z^n$ (see Theorem \ref{thm:minimalinteger}).  The second set
of constraints is more involved. Consider two adjacent cells $I, I' \in
\mathcal{P}$ that contain a segment $[\x, \y] \subseteq \mathbb{R}^n$ in their
intersection. Along the line segment $[\x,\y]$, the gradients of $I$ and $I'$
projected onto the line spanned by the vector $\y - \x$ must agree; the second
set of constraints captures this observation. We will identify a set of vectors $r^1, \ldots, r^{n+1}$ such that every subset of $n$ vectors is linearly independent and such that each vector $r^i$ is contained in $n$ cells of $\mathcal P$ with different gradients. We then use the segment $[\0,r^i]$ to obtain linear equations involving the gradients of $\pi$ and $\pi'$. The fact that every subset of $n$ vectors is linearly independent will be crucial in ensuring the uniqueness of the solution to the system of equations.

      \begin{lemma}[{\cite[Lemma 3.10]{basu-hildebrand-koeppe-molinaro:k+1-slope}}]\label{lemma:kkmDirections}
                There exist vectors $r^1, r^2, \ldots, r^{n + 1} \in \R^n$ with the following properties:
                \begin{enumerate}[\rm(i)]
                        \item For every $i,j,\ell \in \{1, \ldots, n + 1\}$ with $j, \ell$ different from $i$, the equations $r^i \gp^{j} = r^i \gp^{\ell}$ and $r^i \gt^{j} = r^i \gt^{\ell}$ hold.

                        \item Every $n$-subset of $\{r^1, \ldots, r^{n+1}\}$ is linearly independent. 
                \end{enumerate}
        \end{lemma}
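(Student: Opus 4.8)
The plan is to pin down the local structure of $\pi$ at the origin and then read the vectors $r^i$ directly off the normal fan of the polytope of gradients, so that both families of equalities in (i) become a single geometric incidence.

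First I would establish that in a neighborhood of $\0$ the function $\pi$ coincides with the support function of the polytope $P:=\conv\{\gp^1,\dots,\gp^{n+1}\}$, and that $P$ is an $n$-simplex. Since $\pi(\0)=0$ (Theorem~\ref{thm:minimalinteger}) and $\pi$ is affine with gradient $\gp^j$ on each cell of $\mathcal P_j$ containing $\0$, continuity forces the intercept to vanish on those cells; as the cells through $\0$ cover a neighborhood of $\0$, the function is positively homogeneous there. Subadditivity then makes this local function sublinear, hence convex, so it equals the support function $h_P(x)=\max_j \gp^j x$ of its subdifferential $P$ at $\0$. Genuine $n$-dimensionality forces $P$ to be full-dimensional: if the $\gp^j$ lay in an affine hyperplane with normal $a$, then $a\cdot\nabla\pi$ would be constant, making $\pi$ either factor through a linear map to $\R^{n-1}$ (if the constant is $0$) or grow unboundedly along $a$, contradicting the boundedness implied by periodicity modulo $\Z^n$. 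Thus $P$ is an $n$-simplex and, near $\0$, the region on which $\pi$ has gradient $\gp^j$ is exactly the normal cone $N(\gp^j)=\{x: \gp^j x\ge \gp^k x\ \forall k\}$.

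Second, for each $i$ I would set $r^i$ to be a nonzero vector on the ray $\bigcap_{j\ne i}N(\gp^j)$. This intersection consists of the $x$ for which $\gp^j x$ is constant over $j\ne i$ and at least $\gp^i x$; the equalities cut out the line $L_i=\langle\{\gp^j-\gp^\ell:j,\ell\ne i\}\rangle^{\perp}$, which is one-dimensional because the $n$ gradients $\{\gp^j:j\ne i\}$ are affinely independent, and the remaining inequality selects a genuine half-line (nontrivial since $\gp^j-\gp^i\notin\langle\{\gp^k-\gp^\ell:k,\ell\ne i\}\rangle$ by affine independence). For this $r^i$ the equalities $r^i\gp^j=r^i\gp^\ell$ hold by construction. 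To get the companion equalities for $\gt$, recall from the compatibility step that $\pi^1$ is affine with gradient $\gt^j$ on every cell of $\mathcal P_j$; since these cells tile $N(\gp^j)$ near $\0$ and $\pi^1(\0)=0$ (Lemma~\ref{lem:tightness}(i)), continuity gives $\pi^1(x)=\gt^j x$ throughout $N(\gp^j)$ near $\0$. Choosing $r^i$ small enough that $r^i\in N(\gp^j)$ for every $j\ne i$ then yields $\gt^j r^i=\pi^1(r^i)$ independently of $j$, i.e. $r^i\gt^j=r^i\gt^\ell$, establishing property (i).

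Finally, property (ii) follows from the simplex structure. The equalities $r^i(\gp^j-\gp^\ell)=0$ force $r^i$ to be a nonzero multiple of the normal $\nu_i$ to the facet $F_i=\conv\{\gp^j:j\ne i\}$ of $P$. For an $n$-simplex the facet normals satisfy a Minkowski relation $\sum_i \vol(F_i)\,\nu_i=\0$ with strictly positive coefficients, which implies that any $n$ of them are linearly independent; otherwise all $n+1$ would lie in a common hyperplane through $\0$ and $P$ would be unbounded. Hence any $n$ of the $r^i$ are linearly independent. The main obstacle is the first step—showing that $\pi$ is locally a support function and that the gradient polytope is a full-dimensional simplex; everything afterward is linear algebra on the normal fan, with the only other delicate point being the transfer of the affine structure from $\pi$ to $\pi^1$ on each normal cone via the compatibility step and continuity.
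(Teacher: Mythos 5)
Your argument is correct, but it takes a genuinely different route from the one the paper sketches. The paper obtains the $r^i$ by applying the Knaster--Kuratowski--Mazurkiewicz lemma (Lemma~\ref{lem:KKM}) to the facets of an auxiliary simplex $\Delta$ containing the origin, with the closed sets $F_j=\bigcup_{I\in\mathcal P_j}(\Delta\cap I)$; the technical work there is in verifying the KKM covering hypothesis, which produces a point $r^i\in\bigcap_{j\ne i}F_j$ on each facet of $\Delta$, and (i)--(ii) are then extracted from this membership. You instead prove that $\pi$ agrees near the origin with the support function of the gradient simplex $P=\conv\{\gp^1,\dots,\gp^{n+1}\}$, so that near $\0$ the sets $\bigcup_{I\in\mathcal P_j}I$ are exactly the normal cones of the vertices of $P$, and the $r^i$ can be written down explicitly as generators of the rays of the normal fan, i.e., as the outer facet normals of $P$. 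This is more elementary (no fixed-point theory) and more explicit; what the KKM route buys is that it does not require identifying the local gradient pattern as a complete simplicial fan. Two points in your write-up should be made airtight. First, ``subadditivity then makes this local function sublinear'' is loose, since sublinearity is a global notion: the correct chain is that global subadditivity together with positive homogeneity on a small ball $B(0,\eps)$ (every cell meeting a sufficiently small ball contains $\0$ by closedness and local finiteness, and $\pi(\0)=0$ kills the intercepts) yields midpoint convexity on that ball, and continuity upgrades this to convexity, whence $\pi(x)=\max_j\gp^jx$ there. Second, your construction needs that \emph{each} of the $n+1$ gradients is attained on a cell containing the origin; genuine $n$-dimensionality gives affine independence of the $\gp^j$ but not, by itself, that they all appear at $\0$. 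This is precisely the fact recorded at the start of the compatibility step (for every $i$ there is $C_i\in\mathcal P_i$ with $\0\in C_i$), so invoke it explicitly. With those repairs, the transfer to $\pi^1$ via the compatibility step, Lemma~\ref{lem:tightness} and $\pi^1(\0)=0$, and the Minkowski-relation argument for (ii), are sound.
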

%

       The proof of Lemma~\ref{lemma:kkmDirections} uses a result known as the {\em Knaster-Kuratowski-Mazurkiewicz Lemma (KKM Lemma)} from fixed point theory, which exposes a nice structure in the gradient pattern of $\pi$.

\begin{lemma}[KKM \cite{kkm,fpt}]\label{lem:KKM}
        Consider a simplex $\conv(u^j)_{j = 1}^{d}$. Let $F_1, F_2, \ldots, F_d$ be closed sets such that for all $J \subseteq \{1, \ldots, d\}$, the face $\conv(u^j)_{j \in J}$ is contained in $\bigcup_{j \in J} F_j$. Then the intersection $\bigcap_{j = 1}^d F_j$ is non-empty.

\end{lemma}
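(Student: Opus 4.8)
The plan is to derive the statement from \emph{Sperner's lemma}, which is the combinatorial engine underlying this circle of results. Write $\Delta = \conv(u^j)_{j=1}^d$ and recall that, since the $u^j$ are the vertices of a simplex, they are affinely independent; every point of $\Delta$ has a well-defined \emph{carrier} $J(x) \subseteq \{1,\dots,d\}$, namely the set of indices $j$ for which the $j$th barycentric coordinate of $x$ is positive, and $x$ lies in the relative interior of the face $\conv(u^j)_{j\in J(x)}$. Sperner's lemma states: for any triangulation $T$ of $\Delta$ and any labeling $\ell$ of the vertices of $T$ by $\{1,\dots,d\}$ that is \emph{proper} (meaning $\ell(v)\in J(v)$ for every vertex $v$ of $T$), there is at least one \emph{fully-labeled} cell of $T$, i.e.\ a small simplex whose $d$ vertices carry all $d$ distinct labels. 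I would either cite Sperner's lemma or include its standard proof, which is a parity argument counting the boundary faces of $T$ that realize the labels $\{1,\dots,d-1\}$.

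Next I would use the KKM covering hypothesis to manufacture a proper labeling. Fix a triangulation $T$, and for each vertex $v$ of $T$ consider its carrier $J(v)$. Applying the hypothesis with $J = J(v)$ gives $\conv(u^j)_{j\in J(v)} \subseteq \bigcup_{j\in J(v)} F_j$, so in particular $v \in \bigcup_{j\in J(v)} F_j$; hence there is some index $j\in J(v)$ with $v\in F_j$, and I set $\ell(v):=j$. By construction $\ell(v)\in J(v)$, so $\ell$ is proper, and Sperner's lemma yields a fully-labeled cell $\sigma_T$ of $T$. The key point is that, for each label $j\in\{1,\dots,d\}$, the cell $\sigma_T$ possesses a vertex $v_j^T$ with $\ell(v_j^T)=j$, and by the very choice of $\ell$ this vertex satisfies $v_j^T\in F_j$.

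Finally I would pass to the limit. Choose a sequence of triangulations $T_k$ whose mesh tends to $0$, and for each $k$ let $\sigma_k=\sigma_{T_k}$ be the associated fully-labeled cell, with vertices $v_j^k\in F_j$ for $j=1,\dots,d$. Since $\Delta$ is compact, after passing to a subsequence the points $v_1^k$ converge to some $x^\ast\in\Delta$; because the mesh tends to $0$, the diameter of $\sigma_k$ tends to $0$, so in fact $v_j^k\to x^\ast$ for every $j$. As $v_j^k\in F_j$ for all $k$ and each $F_j$ is closed, we conclude $x^\ast\in F_j$ for every $j$, and therefore $x^\ast\in\bigcap_{j=1}^d F_j$, which is thus non-empty.

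The main obstacle is Sperner's lemma itself: the passage from the existence of a fully-labeled cell to the topological conclusion is just compactness together with the closedness of the $F_j$, so all of the genuine content resides in the combinatorial parity argument. If one prefers not to reprove it, the whole statement can instead be obtained from Brouwer's fixed-point theorem; one must then be careful to avoid circularity, since the KKM lemma is itself a standard route \emph{to} Brouwer's theorem.
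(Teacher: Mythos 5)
Your proof is correct. Note, however, that the paper does not prove this lemma at all: it is stated as a classical result from fixed-point theory and cited to the original Knaster--Kuratowski--Mazurkiewicz paper and to a standard reference, then used as a black box in the proof of Lemma~\ref{lemma:kkmDirections}. So there is no proof in the paper to compare against; what you have supplied is the standard self-contained derivation, and it is essentially the argument of the original 1929 paper. Your three steps are all sound: the covering hypothesis applied to the carrier $J(v)$ of each triangulation vertex $v$ produces a proper (Sperner) labeling; Sperner's lemma yields a fully-labeled cell whose vertex with label $j$ lies in $F_j$; and compactness of the simplex together with mesh going to $0$ and closedness of the $F_j$ gives a common point in the limit. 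The only content you defer is Sperner's lemma itself, which is a reasonable thing to cite or to prove by the parity argument you sketch. Your closing remark about circularity is also well taken: since KKM, Sperner, and Brouwer are mutually derivable, the Sperner route is the one that keeps the logical dependencies clean, whereas deducing KKM from Brouwer would be circular in any exposition that later uses KKM to prove Brouwer.
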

%

       This lemma is applied to the facets of a certain simplex $\Delta$ containing the origin, and the closed sets $F_i = \bigcup_{I \in \mathcal{P}_i}(\Delta \cap I)$, for $i=1,\ldots, n+1$. For each facet of $\Delta$ indexed by $i = 1, \ldots, n+1$, the KKM lemma (with $d=n$) ensures the existence of a point $r^i \in \bigcap_{j \neq i} F_j$ on the facet indexed by $i$. These points give the vectors $r^1, \ldots, r^{n+1}$ from Lemma~\ref{lemma:kkmDirections}. The bulk of the technicality lies in proving that the chosen simplex and the sets $F_i$ satisfy the hypothesis of the KKM lemma. 


        We finally present the system of linear equations that we consider.

        \begin{cor}[{\cite[Corollary 3.13]{basu-hildebrand-koeppe-molinaro:k+1-slope}}] \label{lemma:linear-system}
                Consider any $n+1$ affinely independent vectors $\a^1, \a^2, \ldots, \a^{n + 1} \in b+\mathbb{Z}^n$. Also, let $r^1, r^2, \ldots, r^{n + 1}$ be the vectors given by Lemma \ref{lemma:kkmDirections}. Then there exist $\mu_{ij} \in \R_+$ for $i, j \in \{1, \ldots, n+1\}$, with $\sum_{j = 1}^{n + 1} \mu_{ij} = 1$ for all $i \in \{1, \ldots, n+1\}$, such that both $\gt^1, \ldots, \gt^{n+1}$ and $\gp^1, \ldots, \gp^{n+1}$ are solutions to the linear system
        \begin{equation}\label{eq:linear-system}
\begin{aligned}
\textstyle\sum_{j=1}^{n+1} (\mu_{ij}\a^i) \gs^j & = 1 &\qquad& \text{for all }  i \in \{1, \ldots, n+1\}, \\
r^i \gs^{j} - r^i \gs^{\ell} & = 0 & \qquad& \text{for all } i,j, \ell \in \{1, \ldots, n+1\} \textrm{ such that } i \neq j, \ell,
\end{aligned}
\end{equation}
with variables  $\g^1, \ldots, \g^{n+1}\in \R^n$.
        \end{cor}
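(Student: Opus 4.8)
The plan is to read off the coefficients $\mu_{ij}$ from the one-dimensional geometry of the segments $[\0,\a^i]$, to obtain the first block of \eqref{eq:linear-system} by accumulating gradients along these segments (once for $\pi$ and once for $\pi^1$), and to get the second block directly from Lemma~\ref{lemma:kkmDirections}. The whole argument hinges on using \emph{one and the same} family $\mu_{ij}$ for both functions.

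First I would fix $i\in\{1,\dots,n+1\}$ and restrict attention to the segment $[\0,\a^i]$. The compact segment meets only finitely many cells of the complex $\mathcal P$ (Definition~\ref{def:polyhedralComplex}(iii)), so the map $t\mapsto\pi(t\a^i)$ on $[0,1]$ is piecewise linear with finitely many pieces, and on a piece contained in a cell of $\mathcal P_j$ its slope equals $\a^i\gp^j$. For each $j$ let $\mu_{ij}$ be the total length, in the parameter $t$, of the pieces lying in $\mathcal P_j$; then $\mu_{ij}\ge0$ and $\sum_{j=1}^{n+1}\mu_{ij}=1$, as required. Summing the increments of $\pi$ piece by piece gives
\[
\pi(\a^i)-\pi(\0)=\sum_{j=1}^{n+1}\mu_{ij}\,(\a^i\gp^j).
\]
Since $\pi$ is minimal we have $\pi(\0)=0$, and since $\a^i\in b+\Z^n$, Theorem~\ref{thm:minimalinteger} gives $\pi(\a^i)=1$; hence $\sum_j\mu_{ij}(\a^i\gp^j)=1$, which is the first equation in \eqref{eq:linear-system} for $\gs=\gp$.

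Next I would run the identical computation for $\pi^1$. By the compatibility step, $\pi^1$ is affine over every cell of $\mathcal P_j$ with the single gradient $\gt^j$; thus $\pi^1$ is continuous piecewise linear over the \emph{same} complex $\mathcal P$, and along $[\0,\a^i]$ it changes slope on exactly the same pieces as $\pi$, with slope $\a^i\gt^j$ on the pieces in $\mathcal P_j$. Consequently the \emph{same} numbers $\mu_{ij}$ govern $\pi^1$, and $\pi^1(\a^i)-\pi^1(\0)=\sum_{j=1}^{n+1}\mu_{ij}\,(\a^i\gt^j)$. By Lemma~\ref{lem:tightness}(i) the function $\pi^1$ is minimal (and it is nonnegative), so $\pi^1(\0)=0$ and, again by Theorem~\ref{thm:minimalinteger}, $\pi^1(\a^i)=1$; therefore $\sum_j\mu_{ij}(\a^i\gt^j)=1$, the first equation for $\gs=\gt$ with the identical $\mu_{ij}$. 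Finally, the second block $r^i\gs^j-r^i\gs^\ell=0$ for $i\neq j,\ell$ holds for both $\gs=\gp$ and $\gs=\gt$, since this is precisely statement (i) of Lemma~\ref{lemma:kkmDirections}. Collecting the two blocks shows that $\gp^1,\dots,\gp^{n+1}$ and $\gt^1,\dots,\gt^{n+1}$ are both solutions of \eqref{eq:linear-system}.

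The main obstacle is exactly the insistence that a single family $\mu_{ij}$ serve both functions at once, and this is where the compatibility step is indispensable: it guarantees that $\pi^1$ inherits the cell decomposition of $\pi$ (only the gradient labels change), so the piece-lengths $\mu_{ij}$ computed from $\pi$ are literally the lengths relevant to $\pi^1$. The one remaining technical point is that $[\0,\a^i]$ might run along a face shared by cells carrying different labels $j$. This is harmless: on such a face, continuity of $\pi$ forces the adjacent affine forms to agree, so the slope of $t\mapsto\pi(t\a^i)$ is unambiguous (equal to $\a^i\gp^j$ for every adjacent label), and the same holds for $\pi^1$ by its continuity (Lemma~\ref{lem:tightness}(iii)); hence assigning each maximal piece to a single $\mathcal P_j$—with the same choice for both functions—affects neither of the two sums. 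Note that the affine independence of $\a^1,\dots,\a^{n+1}$ is not needed here; it enters only later, to force uniqueness of the solution of \eqref{eq:linear-system}.
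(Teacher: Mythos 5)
Your argument is correct and is essentially the approach the paper indicates: the first block of \eqref{eq:linear-system} comes from accumulating the gradients $\gp^j$ (resp.\ $\gt^j$) along the segment $[\0,\a^i]$ with $\mu_{ij}$ the fraction of the segment in $\mathcal P_i$, using $\pi(\a^i)=\pi^1(\a^i)=1$ from minimality (Theorem~\ref{thm:minimalinteger} and Lemma~\ref{lem:tightness}(i)), and the second block is just Lemma~\ref{lemma:kkmDirections}(i). The survey only cites the original source for this corollary, but your write-up fills in the same segment-decomposition argument it sketches, including the correct observation that the compatibility step is what lets a single family $\mu_{ij}$ serve both $\gp$ and $\gt$.
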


        We remark that we can always find vectors $\a^1, \a^2, \ldots, \a^{n + 1} \in b+\mathbb{Z}^n$ such that the set $\a^1, \ldots, \a^{n+1}$ is affinely independent, so the system above indeed exists. Property (ii) in Lemma~\ref{lemma:kkmDirections} and the fact that $\a^1, \ldots, \a^{n+1}$ are affinely independent can be leveraged to show that \eqref{eq:linear-system} has either no solutions or a unique solution. However, the linear algebra is involved and we refer the reader to~\cite[Section~3.3]{basu-hildebrand-koeppe-molinaro:k+1-slope}. Since $\gp^1, \ldots, \gp^{n+1}$ is a solution, the conclusion is that the system has a unique solution.

\section{The mixed integer model}\label{sec:mixed}

We consider here the mixed integer model \eqref{def mixed-int set}:
\begin{equation*}
	X_{S}(R,P) := \setcond{(s,y) \in \R_+^k \times \Z_+^\ell }{ Rs + Py\in S }
\end{equation*}
where $S\subseteq \R^n\setminus \{0\}$ is a closed set, and $k$ and $\ell$ are both positive. Recall that a pair of functions $(\psi, \pi)$ is a  valid pair if and only if $\sum\psi(r)s_r + \sum\pi(p)y_p \ge 1$ is a valid inequality for $X_{S}(R,P)$, for every $k$, $\ell$, $R$ and $P$. We do not make any nonnegativity assumptions on the functions $\psi, \pi$ in this section, as was done in Section~\ref{sec:pure}. Note that if $(\psi, \pi)$  is a valid pair, then $\psi$ is a valid function and $\pi$ is an integer valid function. However, the converse does not hold.
\smallskip

When $S=b+\Z^n$,   Theorem \ref{thm:psi-B}  characterizes minimal valid functions $\psi$ for the continuous model, and Theorem \ref{thm:minimalinteger}  characterizes {\em nonnegative} minimal integer valid functions $\pi$ for the pure integer model. For the mixed integer model \eqref{def mixed-int set}, Johnson \cite{johnson} gives such a characterization for {\em nonnegative} minimal valid pairs.
\begin{theorem}\label{thm:psi,pi-min}  Let $S=b+\Z^n$ for some $b\notin\Z^n$, and let $(\psi,\pi)$ be a valid pair, with $\pi\ge0$. Then $(\psi,\pi)$ is a minimal  valid pair if and only if $\pi$ is a minimal integer valid function  and $\psi$ satisfies
\begin{equation*}\label{eq:psi-lim}
\psi(r)=\limsup_{h\rightarrow 0^+} \frac{\pi(h r)}{h} \quad\mbox{ for every } r\in \R^n.
\end{equation*}
\end{theorem}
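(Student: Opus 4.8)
The plan is to treat the two implications with two complementary ``gadgets'' that translate between continuous and integer columns. Throughout write $\psi_\pi(r):=\limsup_{h\to 0^+}\pi(hr)/h$ for the candidate function in the statement. The first engine is a \emph{fill-in estimate}: a continuous column $r$ carrying value $s_r$ can be simulated by an integer column $hr$ used $s_r/h$ times (when $s_r/h\in\Z$), since $(hr)(s_r/h)=r s_r$ leaves the membership $Rs+Py\in S$ unchanged while charging $\pi(hr)\cdot(s_r/h)=\frac{\pi(hr)}{h}s_r$. Letting $h\to 0^+$ through admissible values and using only that $\pi$ is an integer valid function will show that charging a continuous column $r$ by $\psi_\pi(r)$ is both valid and best possible. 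The second engine is the \emph{symmetry gadget}: the integer point with columns $hr$ and $b-hr$, each at value $1$, is feasible (their sum is $b\in S$); replacing the $hr$-column by the continuous column $r$ at value $h$ keeps feasibility and, when $\pi$ is symmetric, pins down $\psi$ from below.

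For the ``if'' direction I would first prove that $(\psi_\pi,\pi)$ is valid by the fill-in estimate: given any feasible $(s,y)$ with rational continuous part, pick $h$ with $s_r/h\in\Z_{>0}$ for all columns $r$, invoke validity of $\pi$ on the resulting integer point to get $\sum_r\frac{\pi(hr)}{h}s_r+\sum_p\pi(p)y_p\ge1$, and pass to $\limsup_{h\to0^+}$ (using that the $\limsup$ of a sum is at most the sum of the $\limsup$'s) to obtain $\sum_r\psi_\pi(r)s_r+\sum_p\pi(p)y_p\ge1$; a density/continuity step then covers arbitrary real $s$. For minimality, suppose $(\psi',\pi')\le(\psi_\pi,\pi)$ is valid. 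Since a valid pair has $\pi'$ integer valid and $\pi'\le\pi$ with $\pi$ minimal, $\pi'=\pi$; then the symmetry gadget gives, for every $h$, $\psi'(r)h+\pi(b-hr)\ge1$, and the symmetry condition $\pi(b-hr)=1-\pi(hr)$ yields $\psi'(r)\ge\pi(hr)/h$, whence $\psi'\ge\psi_\pi\ge\psi'$ and equality holds.

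For the ``only if'' direction, let $(\psi,\pi)$ be a minimal valid pair with $\pi\ge0$; then $\pi$ is integer valid. I would verify that $\pi$ is a minimal integer valid function by checking the Gomory--Johnson conditions of Theorem~\ref{thm:minimalinteger} through single-column perturbations that touch only integer columns, exactly as in Lemma~\ref{Le:Cont-oldProperties} and the proof of Theorem~\ref{thm:minimalinteger}: setting $\pi$ to $0$ on $\Z^n$, and the column-merging that forces subadditivity, both produce dominating valid pairs (the continuous part of the instance is untouched), so minimality of the pair forces $\pi(w)=0$ for $w\in\Z^n$ and subadditivity of $\pi$. On the $\psi$ side, the fill-in perturbation applied to a single continuous column $r_0$ exhibits a valid pair with that coefficient lowered to $\psi_\pi(r_0)$, so minimality gives $\psi\le\psi_\pi$; once $\pi$ is known to be symmetric, the symmetry gadget upgrades this to $\psi=\psi_\pi$.

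The main obstacle is the symmetry condition for $\pi$ in the ``only if'' direction (equivalently, the lower bound $\psi\ge\psi_\pi$). The pure-integer symmetry argument of Theorem~\ref{thm:minimalinteger}(c) does not transfer verbatim, because in a genuinely mixed feasible point the continuous contribution $\sum_r r\,s_r$ is not integral, so one cannot collapse the point modulo $\Z^n$ to recover the value $\pi(b-\tilde p)$; the required estimate becomes a \emph{mixed} subadditivity of the form $\pi(u)+\psi(v)\ge\pi(u+v)$ with $v=\sum_r r\,s_r$, which is precisely the content of $\psi\ge\psi_\pi$. Thus $\psi$ and $\pi$ cannot be pinned down in isolation: the argument must couple them, using the already-established bound $\psi\le\psi_\pi$ together with the symmetry gadget to show that any symmetry defect would permit a strict decrease of $\pi$ or of $\psi$, contradicting minimality of the pair. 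A secondary technical point to handle carefully is the fill-in's divisibility requirement $s_r/h\in\Z$, which restricts $h$ to a countable set; passing from rational to arbitrary real continuous data requires a limiting argument and the continuity of the sublinear function $\psi_\pi$.
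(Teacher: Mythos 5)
The survey only states this theorem with a citation to Johnson and gives no proof, so your argument has to stand on its own. Most of it does: the ``if'' direction is sound (validity is part of the hypothesis; minimality of $\pi$ forces $\pi'=\pi$, and the two-column point consisting of the continuous column $r$ at value $h$ and the integer column $b-hr$ at value $1$, combined with the symmetry condition from Theorem~\ref{thm:minimalinteger}, forces $\psi'(r)\ge\pi(hr)/h$ for every $h>0$). In the ``only if'' direction, the single-column fill-in giving $\psi\le\psi_\pi$ is also correct, and note that for one column you may take $h=\bar s_{r_0}/N$, so no rationality or density argument is needed there.

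The genuine gap is exactly the point you flag as ``the main obstacle'' and then leave unresolved: the symmetry condition for $\pi$, hence the lower bound $\psi\ge\psi_\pi$. Your proposed fix --- couple $\psi\le\psi_\pi$ with the symmetry gadget to extract a contradiction from a symmetry defect --- does not work as described: the gadget only yields $h\psi(r)+\pi(b-hr)\ge1$, and adding $\psi\le\psi_\pi$ produces a \emph{lower} bound on $\pi(b-hr)$, which points the wrong way and does not manufacture a strictly smaller valid pair. The circularity you perceive is avoidable, because the estimate you actually need, $\pi(u)+\psi(v)\ge\pi(u+v)$ for all $u,v\in\R^n$, follows \emph{directly} from minimality of the pair by the very column-splitting device you use elsewhere: if $\pi(\tilde u)+\psi(\tilde v)<\pi(\tilde u+\tilde v)$, set $\pi'(\tilde u+\tilde v):=\pi(\tilde u)+\psi(\tilde v)$ and $\pi':=\pi$ elsewhere; for any feasible $(\bar s,\bar y)$, reroute the mass $\bar y_{\tilde u+\tilde v}\in\Z_+$ from the integer column $\tilde u+\tilde v$ onto the integer column $\tilde u$ and the \emph{continuous} column $\tilde v$ (this preserves $Rs+Py$ and integrality of $y$), and validity of $(\psi,\pi)$ on the rerouted point shows $(\psi,\pi')$ is valid, contradicting minimality. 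With this mixed subadditivity in hand --- together with sublinearity of $\psi$, which also follows from minimality of the pair exactly as in Lemma~\ref{Le:Cont-oldProperties} --- the symmetry argument of Theorem~\ref{thm:minimalinteger}(c) transfers verbatim: the continuous contribution $v=\sum_r r\bar s_r$ is absorbed via $\sum_r\psi(r)\bar s_r\ge\psi(v)$ and $\psi(v)+\pi(u)\ge\pi(u+v)$, recovering $\pi(b-\tilde p)$ on the right-hand side. Symmetry then feeds your gadget to give $\psi\ge\psi_\pi$ and closes the proof. Until that lemma is supplied, the ``only if'' direction is incomplete.
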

\smallskip

When $S=b+\Z^n$, for the continuous model, Theorem \ref{thm:psi-B} shows that minimal functions $\psi$ are gauge functions of $K$, where polyhedron $K$ is a maximal $S$-free convex set, and Theorem \ref{TH-gaugePolyhedron} gives a formula for the computation of $\psi$.
One of the most studied procedures to compute a minimal pair $(\psi, \pi)$ for the mixed integer model  is to start from a minimal function $\psi$ for the continuous model, and then compute $\pi$ such that $(\psi, \pi)$ is a minimal pair. Such a procedure goes under that name of lifting. A  \emph{lifting} of a valid function $\psi$ is a function $\pi$ such that $(\psi,\pi)$ is a valid pair.
A lifting $\pi$ of $\psi$ is \emph{minimal} if  $\pi=\pi'$ for every lifting $\pi'$ of $\psi$ such that   $\pi'\le \pi$.

Theorem \ref{thm:psi,pi-min} shows that  if $(\psi, \pi)$ is a minimal valid pair  and $\pi\ge0$, then $\pi$ is a minimal integer valid function. However, $\psi$ is not guaranteed to be a minimal valid function. Therefore the lifting procedure outlined above, applied to a minimal valid function, only produces a subset of minimal valid pairs, which however includes the most well-known and computationally effective pairs.
\smallskip

 The idea of using lifting in this context was proposed by Dey and Wolsey~\cite{dw2008}, who imported the concept of {\em monoidal strengthening}. Monoidal strengthening was  introduced by Balas and Jeroslow~\cite{baljer} to strengthen cutting planes by using integrality information. Searching for minimal liftings $\pi$ is analogous to the idea of strengthening the ``trivial" valid pair $(\psi, \psi)$ by using the integrality information on the $y$ variables.


\subsection{A geometric view of lifting}
 The following proposition imposes regularity on the structure of minimal liftings.

 \begin{prop}[{\cite[Proposition A.3]{basu-paat-lifting}}]\label{prop:periodic}
 Given a closed set $S \subseteq \R^n\setminus\{0\}$, let $\psi$ be a valid function and $\pi$ a minimal lifting of $\psi$. Define
  \begin{equation}\label{eq:W}W_S := \{w\in \R^n \colon s+\lambda w\in S~, \forall s\in S, \forall \lambda\in \Z\}.\end{equation}
  Then $\pi(p + w) = \pi(p)$ for all $p \in \R^n$ and $w \in W_S$ (i.e., $\pi$ is periodic modulo $W_S$).
  \end{prop}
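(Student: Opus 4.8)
The plan is to exploit the minimality of $\pi$ together with the group structure of $W_S$. First I would record the one fact about $W_S$ that drives everything: if $w \in W_S$ then for any $s'\in S$ we have $s' + \lambda w \in S$ for every $\lambda \in \Z$; in particular $W_S$ is closed under negation, so $-w \in W_S$ whenever $w \in W_S$. Now fix an arbitrary $w \in W_S$. The goal is to prove the two inequalities $\pi(p) \le \pi(p+w)$ and $\pi(p+w) \le \pi(p)$ for all $p$, which together give the claim; by the symmetry $w \leftrightarrow -w$ it suffices to establish the first one for every $w \in W_S$.

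The heart of the argument is the claim that the pointwise minimum $\pi'(p) := \min\{\pi(p),\, \pi(p+w)\}$ is again a lifting of $\psi$, i.e.\@ that $(\psi,\pi')$ is a valid pair. To see this, fix any matrices $R,P$ and any $(s,y) \in X_S(R,P)$. For each column $p$ of $P$ choose $w_p \in \{0,w\}$ so that $\pi(p+w_p) = \pi'(p)$ (take $w_p = 0$ if $\pi(p) \le \pi(p+w)$, and $w_p = w$ otherwise); note that $w_p$ depends only on $p$ and $w$, so the matrix $P'$ whose columns are $p + w_p$ is well defined independently of $(s,y)$. Then
\[
Rs + P'y = Rs + Py + \Bigl(\sum_p y_p w_p\Bigr) = (Rs + Py) + Nw,
\]
where $N := \sum_{p\,:\,w_p = w} y_p \in \Z_+$. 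Since $Rs + Py \in S$ and $w \in W_S$, the defining property of $W_S$ with $\lambda = N$ gives $Rs + P'y \in S$, so $(s,y) \in X_S(R,P')$. Applying the validity of $(\psi,\pi)$ to this point of $X_S(R,P')$ yields $\sum_r \psi(r) s_r + \sum_p \pi(p + w_p) y_p \ge 1$, which is exactly $\sum_r \psi(r) s_r + \sum_p \pi'(p) y_p \ge 1$. As $R,P$ and $(s,y)$ were arbitrary, $(\psi,\pi')$ is a valid pair.

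Since $\pi'(p) = \min\{\pi(p),\pi(p+w)\} \le \pi(p)$, the function $\pi'$ is a lifting of $\psi$ dominated by $\pi$, so the minimality of $\pi$ forces $\pi' = \pi$. Hence $\min\{\pi(p),\pi(p+w)\} = \pi(p)$, that is $\pi(p) \le \pi(p+w)$, for every $p \in \R^n$. Replacing $w$ by $-w \in W_S$ and then substituting $p \mapsto p+w$ gives $\pi(p+w) \le \pi(p)$, and combining the two inequalities yields $\pi(p+w) = \pi(p)$ for all $p \in \R^n$ and $w \in W_S$. I expect the one delicate point to be the construction of $P'$ and the verification that $(s,y)$ remains feasible for it: the aggregate shift $\sum_p y_p w_p$ must land back in $S$, which is precisely where the integrality of $y$ and the closure of $S$ under integer multiples of $w$ (the very definition of $W_S$) are used. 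Working with the minimum of two shifts, rather than an infimum over all of $W_S$, keeps $\pi'$ real-valued with the minimizing shift attained, sidestepping any concern about the lifting taking the value $-\infty$.
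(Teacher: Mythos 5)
Your proof is correct and rests on the same mechanism as the paper's: perturb $\pi$ downward using the shift by $w$, verify the perturbed function is still a lifting by replacing columns $p$ of $P$ with $p+w$ and invoking the definition of $W_S$ to preserve feasibility, then conclude by minimality. The only difference is presentational — you take the pointwise minimum $\min\{\pi(p),\pi(p+w)\}$ over all $p$ at once and argue directly, whereas the paper perturbs $\pi$ at a single point $\hat p$ and argues by contradiction — but the key step and its justification are identical.
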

\begin{proof} Let $\pi$ be a minimal lifting of $\psi$. Assume to the contrary that there exists some $\hat{p}\in \R^n$ and $w\in W_S$ such that $\pi(\hat{p})\neq \pi(\hat{p}+w)$. Since $-w\in W_S$, we may assume $\pi(\hat{p})>\pi(\hat{p}+w)$. Define a function $\tilde{\pi}$ as $\tilde{\pi}(\hat p)=\pi(\hat{p}+w)$ and $\tilde{\pi}(p)=\pi(p)$ for $p\ne\hat{p}$. We show that $\tilde\pi$ is a lifting of $\psi$, a contradiction to the assumption that $\pi$ is a minimal lifting.

Let $ R\in \R^{n\times k}$, $P\in \R^{n\times \ell}$, and $(s,y)\in X_S(R,P)$. We assume, without loss of generality, that $\hat{p}$ and $\hat{p}+w$ are columns of $P$  and we  show that $\sum\psi(r)s_r + \sum\tilde{\pi}(p)y_p \ge 1$.
Let  $\bar y$ be such that $\bar{y}_{\hat{p}}=0$ and $\bar{y}_{p}=y_{p}$ for $p\ne \hat{p}$. Since $w\in W_S$ and $y_{\hat{p}} \in \Z$, it follows that
\begin{equation*}
Rs+Py \in S \iff Rs+Py+wy_{\hat{p}} = Rs+P\bar{y}+(\hat{p}+w)y_{\hat{p}} \in S.
\end{equation*}
Let $y'$ be defined as $y'_{\hat{p}+w}=y_{\hat{p}}+y_{\hat{p}+w}$, $y'_{\hat{p}}=0$ and $y'_p=y_p$ for $p\ne \hat{p}+w,\,\hat{p}$. Since $\pi$ is a lifting of $\psi$ and $Rs+Py'=Rs+P\bar{y}+(\hat{p}+w)y_{\hat{p}}\in S$, we have
\begin{equation*}
\sum\psi(r)s_r + \sum\tilde{\pi}(p)y_p = \sum\psi(r)s_r + \sum\pi(p)y'_p \ge 1.
\end{equation*}
\end{proof}

The set $W_S$ for arbitrary $S$ was defined first in~\cite{basu-paat-lifting}, where the above proposition is also proved.

Given a valid function $\psi$, the {\em lifting region} $ T_\psi$ (first introduced in~\cite{dw2008}) is:
\begin{equation}\label{eq:lifting-region} T_\psi := \{r \in \R^n \colon \pi(r) = \psi(r) \textrm{ for every minimal lifting }\pi \textrm{ of }\psi\}.\end{equation}

Conforti et al. \cite{ccz} prove that:
\begin{theorem}
If $S=(b+\Z^n)\cap Q$, where $b\notin\Z^n$ and $Q$ is a rational polyhedron, and $\psi$ is a minimal valid function, then there exists a full-dimensional ball $B(0,\varepsilon)\subseteq T_{\psi}$ for some $\epsilon>0$.
\end{theorem}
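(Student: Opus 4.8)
The plan is to fix an arbitrary minimal lifting $\pi$ of $\psi$ and to squeeze it from both sides on a neighbourhood of the origin, proving $\psi(r)\le\pi(r)\le\psi(r)$ whenever $\|r\|$ is small. Set $B:=\{r\in\R^n:\psi(r)\le1\}$. Since $\psi$ is a minimal valid function, $B$ is $S$-free by Theorem~\ref{th-link}, and because $S=(b+\Z^n)\cap Q$ with $Q$ rational, Theorem~\ref{thm:S-free} shows that $B$ is a maximal $S$-free polyhedron with $0\in\intr(B)$ whose facets each contain a point of $S$ in their relative interior. Writing an irredundant description $B=\{x:a_ix\le1,\ i\in I\}$, the function $\psi$ is the associated sublinear representation $\psi(r)=\max_{i\in I}a_ir$; there are finitely many facets $F_i=\{x\in B:a_ix=1\}$, and for each $i\in I$ I fix a point $q^i\in\relint(F_i)\cap S$, so that $a_iq^i=1$ while $a_jq^i\le1-\eta$ for all $j\ne i$ and some $\eta>0$.

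For the upper bound I would first observe that $\psi$ is itself a lifting: applying validity of $\psi$ for the continuous model to the matrix obtained by concatenating the columns of $R$ and $P$ shows that $(\psi,\psi)$ is a valid pair. To deduce $\pi\le\psi$ it then suffices to check that $\pi':=\min(\pi,\psi)$ is a lifting, since minimality of $\pi$ forces $\pi=\pi'\le\psi$. Given $(s,y)\in X_S(R,P)$, I split the columns $p$ of $P$ according to whether $\pi(p)\le\psi(p)$; the columns with $\pi(p)>\psi(p)$ I reclassify as continuous columns, which is legitimate because each $y_p$ is a nonnegative integer and hence an admissible continuous value, and I apply validity of $(\psi,\pi)$ to the reassigned configuration. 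This yields exactly $\sum\psi(r)s_r+\sum\min(\pi,\psi)(p)y_p\ge1$, so $\pi'$ is a lifting.

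The crux is the matching lower bound $\pi(r)\ge\psi(r)$ for small $r$. For any $q\in S$, testing the valid pair $(\psi,\pi)$ on the single continuous column $q-r$ and the single integer column $r$ at the feasible point $(1,1)$ --- feasible since $(q-r)+r=q\in S$ --- yields $\psi(q-r)+\pi(r)\ge1$, hence $\pi(r)\ge1-\inf_{q\in S}\psi(q-r)$. It remains to show this bound is tight near $0$, i.e.\ $\inf_{q\in S}\psi(q-r)=1-\psi(r)$ once $\|r\|$ is small; the inequality $\ge$ is immediate from subadditivity of $\psi$ together with $\psi(q)\ge1$. For the reverse, pick a facet index $i$ attaining $\psi(r)=a_ir$ and take $q=q^i$: then $a_i(q^i-r)=1-a_ir$, while for $j\ne i$ we have $a_j(q^i-r)\le 1-\eta-a_jr$, and since $-a_jr\le -a_ir+\|a_i-a_j\|\,\|r\|$ this is at most $a_i(q^i-r)-\eta+\|a_i-a_j\|\,\|r\|$. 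Choosing $\|r\|<\varepsilon$ with $\varepsilon<\eta/(2\max_k\|a_k\|)$, uniformly over the finitely many facets, forces the maximum defining $\psi(q^i-r)$ to be attained at $j=i$, so $\psi(q^i-r)=1-\psi(r)$ and therefore $\pi(r)\ge\psi(r)$. Combining the two bounds gives $\pi(r)=\psi(r)$ for all $\|r\|<\varepsilon$ and every minimal lifting $\pi$, which is precisely $B(0,\varepsilon)\subseteq T_\psi$.

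The principal obstacle is exactly this tightness step, and it is where maximality of $B$ enters: the fact that each facet carries a point of $S$ in its relative interior supplies the strict slack $\eta>0$ guaranteeing that the facet maximizing $\psi$ at $r$ still maximizes it at $q^i-r$ when $r$ is small, which is what upgrades the generic validity bound $\pi(r)\ge1-\inf_{q\in S}\psi(q-r)$ to the exact value $\psi(r)$. By contrast, the reclassification argument for $\pi\le\psi$ and the one-column validity bound are routine. The only additional care needed is to keep $\varepsilon$ uniform across all facets (possible since there are finitely many) and to note that recession directions, where the maximum of the $a_ir$ is nonpositive, are covered by the same computation and give $\pi(r)=\psi(r)$ there as well.
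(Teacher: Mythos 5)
Your argument is correct. The survey itself gives no proof of this theorem---it is stated with a citation to Conforti et al.---so there is nothing in the paper to compare against line by line; what you have written is essentially the argument from the cited literature, rephrased without the spindle notation of Section 6.1.1 (your verification that the facet index $i$ maximizing $a_ir$ also maximizes $a_j(q^i-r)$ for $\|r\|<\eta/(2\max_k\|a_k\|)$ is exactly the check that $r\in T(q^i)$). Both halves are sound: reclassifying the integer columns with $\pi(p)>\psi(p)$ as continuous columns does show that $\min(\pi,\psi)$ is a lifting, whence $\pi\le\psi$ for every minimal lifting; and the one-column test $\psi(q-r)+\pi(r)\ge1$ combined with your facet-slack estimate gives $\pi(r)\ge\psi(r)$ on the same ball, with $\varepsilon$ uniform over the finitely many facets and independent of $\pi$. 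Two ingredients are used at the same level of informality as the survey itself: (a) that a minimal valid function for $S=(b+\Z^n)\cap Q$ is precisely the smallest representation $\max_{i\in I}a_ir$ of a maximal $S$-free polyhedron containing $0$ in its interior---the survey asserts this correspondence for general $S$ but only proves it for $S=b+\Z^n$; and (b) that every facet of that polyhedron contains a point of $S$ in its relative interior, which is Theorem~\ref{thm:S-free}(i) and formally carries the hypothesis $\dim(\conv(S))=n$ (in the half-space case (ii) the single facet is a supporting hyperplane of $\conv(S)$ and hence still meets $S$, and your argument then needs no smallness condition at all). Neither point is a flaw in your reasoning, only an inherited dependence on the structure theory that the survey also takes as given.
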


The next theorem provides a lifting  for any valid function $\psi$  and characterizes a condition on $T_\psi$ for this lifting to be minimal.
   \begin{theorem}[{\cite[Proposition A.3]{basu-paat-lifting}}]
 Given a closed set $S \subseteq \R^n\setminus\{0\}$, let $\psi$ be a valid function and define
  \begin{equation}\label{eq:pi-ast}
\pi^\ast(p) = \inf_{w \in W_S} \psi(p + w),\;\textrm{ for all } p\in \R^n.
\end{equation}
Then $\pi^\ast$ is a lifting of $\psi$. Moreover, if $T_\psi + W_S = \R^n$, then $\pi^\ast$ is a minimal lifting of $\psi$ and $\pi^\ast(p)=\psi(p+w)\mbox{  for any $w\in W_S$ such that $p+w\in T_{\psi}$}.$

\end{theorem}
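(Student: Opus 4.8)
The plan is to prove the two assertions separately, using two facts about $W_S$: that it is an additive subgroup of $\R^n$ (closure under sums and negation is immediate from~\eqref{eq:W}, and $0\in W_S$), and that every \emph{minimal} lifting of $\psi$ is periodic modulo $W_S$ by Proposition~\ref{prop:periodic}. For the first assertion, the key device is the ``trivial'' valid pair $(\psi,\psi)$, which is valid because a point of $X_S(R,P)$ treated with all coordinates continuous lies in $C_S([R,P])$ (see~\eqref{def continuous set}), where $\psi$ is valid. Fix $(s,y)\in X_S(R,P)$ and $\epsilon>0$, and for each column $p$ of $P$ choose $w_p\in W_S$ with $\psi(p+w_p)\le\pi^\ast(p)+\epsilon$. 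Since each $w_p\in W_S$ and $y_p\in\Z$, shifting $Rs+Py\in S$ successively by $y_p w_p$ keeps the point in $S$, so $(s,y)\in X_S(R,P')$ where $P'$ has columns $p+w_p$. Applying $(\psi,\psi)$ to $(R,P')$ gives $\sum\psi(r)s_r+\sum\psi(p+w_p)y_p\ge 1$; bounding $\psi(p+w_p)\le\pi^\ast(p)+\epsilon$ and letting $\epsilon\to0$ yields $\sum\psi(r)s_r+\sum\pi^\ast(p)y_p\ge1$, so $\pi^\ast$ is a lifting. (Since $0\in W_S$ forces $\pi^\ast\le\psi<\infty$, the only finiteness worry is $\pi^\ast(p)=-\infty$, which the same inequality rules out whenever a feasible point uses column $p$.)

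For the second assertion I would first record two consequences. First, $\pi^\ast$ is itself periodic modulo $W_S$: reindexing the infimum using that $W_S$ is a group gives $\pi^\ast(p+w')=\pi^\ast(p)$ for $w'\in W_S$. Second, every minimal lifting $\pi$ satisfies $\pi\le\psi$. This follows by checking that $\min(\pi,\psi)$ is again a lifting: given $(s,y)\in X_S(R,P)$, move the columns $p$ with $\psi(p)\le\pi(p)$ into a continuous block and apply the valid pair $(\psi,\pi)$ to the reduced data; the resulting inequality is exactly $\sum\psi(r)s_r+\sum\min(\pi(p),\psi(p))y_p\ge1$, so minimality of $\pi$ forces $\pi=\min(\pi,\psi)$. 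Combining $\pi\le\psi$ with the periodicity of $\pi$ gives $\pi(p)=\pi(p+w)\le\psi(p+w)$ for every $w\in W_S$, hence $\pi\le\pi^\ast$ for every minimal lifting $\pi$.

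Finally I would invoke the hypothesis $T_\psi+W_S=\R^n$. By an application of Zorn's lemma as in Section~\ref{sec:separation} (pointwise infima of chains of liftings are again liftings), there exists a minimal lifting $\pi$, necessarily with $\pi\le\pi^\ast$ by the previous paragraph. Fix $p\in\R^n$; since $W_S$ is a group, the covering hypothesis provides $w\in W_S$ with $p+w\in T_\psi$, and then the chain $\pi(p)\le\pi^\ast(p)\le\psi(p+w)=\pi(p+w)=\pi(p)$ — using $\pi\le\pi^\ast$, the definition of the infimum, the fact that $p+w\in T_\psi$ with $\pi$ minimal, and periodicity of $\pi$ — collapses to equalities. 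Thus $\pi(p)=\pi^\ast(p)=\psi(p+w)$ for all $p$, so $\pi^\ast$ equals the minimal lifting $\pi$ and is therefore itself a minimal lifting, and $\pi^\ast(p)=\psi(p+w)$ for any admissible $w$.

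The main obstacle is the first part: the column-substitution trick, together with the $\epsilon$-approximation of the infimum and the appeal to $(\psi,\psi)$, is the only genuinely nonroutine step, and one must be careful that replacing $p$ by $p+w_p$ preserves membership in $S$ for the integer multiplicity $y_p$. Everything else is a matter of correctly wiring Proposition~\ref{prop:periodic}, the inequality $\pi\le\psi$ for minimal liftings, and the covering hypothesis into the single collapsing chain above; the one background input to confirm is that the poset of liftings of $\psi$ admits minimal elements below any given lifting, which is the Zorn's lemma argument already used for valid pairs.
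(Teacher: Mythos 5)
Your proof is correct, and the first half (validity of $(\psi,\pi^\ast)$) is essentially the paper's argument: substitute each column $p$ by $p+w_p$ with $\psi(p+w_p)$ within $\epsilon$ of the infimum, observe that feasibility is preserved because $W_S$ is a group, and apply the trivial valid pair $(\psi,\psi)$; the paper merely phrases this by contradiction with a slightly fussier choice of the $\epsilon$-tolerances. The second half reaches the same conclusion by a somewhat different route. The paper only establishes the one inequality $\pi(p)=\pi(p+w)=\psi(p+w)\ge\pi^\ast(p)$ for a minimal lifting $\pi$ (periodicity from Proposition~\ref{prop:periodic}, then the definition of $T_\psi$ at the point $p+w$, then the definition of the infimum), and concludes $\pi=\pi^\ast$ directly from the minimality of $\pi$ together with the already-proved fact that $\pi^\ast$ is a lifting. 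You instead first prove the auxiliary facts that $\pi^\ast$ is $W_S$-periodic and that every minimal lifting satisfies $\pi\le\psi$ (via the observation that $\min(\pi,\psi)$ is again a lifting, obtained by moving the columns where $\psi\le\pi$ into the continuous block), deduce $\pi\le\pi^\ast$, and then close a two-sided sandwich. Both arguments are sound; yours is longer but has the merit of making explicit two points the paper leaves implicit, namely the existence of a minimal lifting (your Zorn's lemma step, without which ``let $\pi$ be any minimal lifting'' proves nothing about $\pi^\ast$) and the a priori comparison $\pi\le\psi\,$, while the paper's version gets away with less machinery because minimality of $\pi$ against the lifting $\pi^\ast\le\pi$ already forces equality.
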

  \begin{proof}
We first show that $(\psi, \pi^\ast)$ is a valid pair. Suppose to the contrary that there exist matrices $R, P$ and $(\bar s, \bar y) \in X_S(R,P)$ such that $\sum \psi(r)\bar s_r + \sum\pi^\ast(p)\bar y_p < 1$. Let $\epsilon = 1- \sum \psi(r)\bar s_r - \sum\pi^\ast(p)\bar y_p$. For each column $p$ of the matrix $P$, by definition of $\pi^\ast$, there exists $w_p \in W_S$ such that $\psi(p + w_p) \leq \pi^\ast(p) +\frac{\epsilon}{2\ell(\bar y_p + 1)}$ (recall that $\ell$ is the number of columns of $P$). Let $P'$ be the matrix with columns $p + w_p$. Then $R\bar s + P'\bar y = R\bar s + P\bar y + W\bar y$, where $W$ is the matrix with columns $w_p$. Since $W\bar y \in W_S$, $(\bar s, \bar y) \in X_S(R,P')$. However,
$$\begin{array}{rcl}1 \leq \sum \psi(r) \bar s_r + \sum \psi(p+w_p)\bar y_p & \leq & \sum \psi(r) \bar s_r + \sum (\pi^\ast(p) + \frac{\epsilon}{2\ell(\bar y_p + 1)})\bar y_p \\ & \leq &\sum \psi(r)\bar s_r + \sum\pi^\ast(p)\bar y_p + \frac\epsilon 2 \\ & = &1 - \epsilon + \frac\epsilon 2 < 1,\end{array}$$
where we used the fact that $(\psi,\psi)$ is a valid pair. Thus we have a contradiction, and therefore $\pi^\ast$ is a lifting of $\psi$.

Let $\pi$ be any minimal lifting. Consider any $p \in \R^n$ and let $w \in W_S$ be such that $p + w \in T_\psi$. By Proposition~\ref{prop:periodic}, $\pi(p) = \pi(p+w) = \psi(p+w) \geq \pi^*(p)$. This implies that $\pi(p) = \pi^*(p) = \psi(p+w)$ since $\pi$ is a minimal lifting.
\end{proof}

\paragraph{Importance of the lifting region.} In light of the above results, if we start with a valid function $\psi$ whose values we can compute, and an explicit description for $T_\psi$ can be obtained, then the coefficients $\pi^\ast(p)$ can be computed by finding  a $w\in W_S$ such that $p + w \in T_\psi$, and then using the formula for $\psi(p+w)$.
Moreover, if $\psi$ is a {\em minimal} valid function, then $(\psi, \pi)$ is a minimal valid pair. Basu et al.~\cite{bcccz}  show that when $S=(b+\Z^n)\cap Q$ where $Q$ is a rational polyhedron, $T_\psi$ can be described as the finite union of full dimensional polyhedra, each of which has an explicit inequality description. This is discussed below.

\subsubsection{A description of the lifting region}\label{s:desc}
 Throughout this section we assume $S=(b+\Z^n)\cap Q$ where $Q$ is a rational polyhedron, and we discuss the lifting region $T_{\psi}$ of a minimal  valid function $\psi$. Recall from Theorem \ref{thm:S-free}  that in this case maximal $S$-free convex sets are polyhedra.  Let $ a_i r \leq 1, i\in I$ be an irredundant description  of a maximal $S$-free polyhedron $K$ with $0\in \intr(K)$. Recall that the minimal valid function $\psi$ associated with $K$ is $\psi(r)=\max_{i\in I}a_ir$.
 \smallskip

For each $s \in K \cap S$, let $k(s) \in I$ be an index such that $a_{k(s)} s = 1$. By Theorem \ref{thm:S-free} such an index exists since $K$ is a maximal $S$-free convex set, and $s$ is on the boundary of $K$. Define the {\em spindle} $T(s)$ as follows:
$$T(s) := \{r\in \R^n \colon (a_i - a_{k(s)}) r \leq 0,\; (a_i - a_{k(s)}) (s - r) \leq 0 \; \mbox{ for all } i \in I\}.$$
Basu et al.~\cite{bcccz} prove the following:
\begin{theorem}  Let $S=(b+\Z^n)\cap Q$, where $b\notin\Z^n$ and $Q$ is a rational polyhedron, and let $K$ and $\psi$ be as above.  Then the lifting region is: $$T_{\psi}=T(S,K) := \bigcup_{s \in K\cap S} T(s).$$
  \end{theorem}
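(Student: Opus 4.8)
The plan is to establish the cycle of inclusions
$T(S,K)\subseteq T_\psi\subseteq\{r\in\R^n:\pi^\ast(r)=\psi(r)\}\subseteq T(S,K)$,
where $\pi^\ast(p)=\inf_{w\in W_S}\psi(p+w)$ is the canonical lifting of the preceding theorem; once the cycle is closed the three sets coincide, which in particular gives $T_\psi=T(S,K)$. Two preliminary facts about minimal liftings will be used repeatedly. First, every minimal lifting $\pi$ satisfies $\pi\le\psi$: for any lifting $\pi$ the function $\min(\pi,\psi)$ is again a lifting, because in any feasible $(s,y)\in X_S(R,P)$ one may reclassify each integer column $p$ with $\psi(p)<\pi(p)$ as a continuous column --- this leaves $Rs+Py$, and hence feasibility, unchanged while replacing $\pi(p)$ by $\psi(p)$ in the cut --- so minimality forces $\pi\le\psi$. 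Second, by Proposition~\ref{prop:periodic} a minimal lifting $\pi$ is periodic modulo $W_S$, so $\pi(r)=\pi(r+w)\le\psi(r+w)$ for every $w\in W_S$, giving $\pi\le\pi^\ast$.

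Next I would recast $T(S,K)$ in terms of $\psi$. For $s\in K\cap S$ we have $\psi(s)=1$, since $K$ is $S$-free and $s\in K\cap S\subseteq\bd K$; moreover $\psi\ge0$ because $\lin K=\rec K$ has empty interior (Theorems~\ref{thm:S-free} and~\ref{th-representations}). Subadditivity then yields $\psi(r)+\psi(s-r)\ge\psi(s)=1$, and reading off the gauge formula $\psi(r)=\max_{i\in I}a_ir$ shows that $r\in T(s)$ exactly when equality holds, i.e.\ when $\psi(s-r)\le1-\psi(r)$, i.e.\ when $s\in r+(1-\psi(r))K$. Writing $K_r:=r+(1-\psi(r))K\subseteq K$ for $r\in K$, this gives the clean description $T(S,K)=\{r\in K:K_r\cap S\ne\emptyset\}$.

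The two easy inclusions follow quickly. For $T(S,K)\subseteq T_\psi$, fix $r\in T(s)$, so $\psi(r)+\psi(s-r)=1$, and let $\pi$ be any minimal lifting; validity of $(\psi,\pi)$ on the instance $R=[\,s-r\,]$, $P=[\,r\,]$ with $s_{s-r}=y_r=1$ (feasible since $(s-r)+r=s\in S$) gives $\psi(s-r)+\pi(r)\ge1$, hence $\pi(r)\ge1-\psi(s-r)=\psi(r)$, which together with $\pi\le\psi$ forces $\pi(r)=\psi(r)$ and $r\in T_\psi$. For $T_\psi\subseteq\{\pi^\ast=\psi\}$, pick $r\in T_\psi$ and a minimal lifting $\pi\le\psi$ (which exists by Zorn's lemma applied to the lifting $\psi$); then $\psi(r)=\pi(r)\le\pi^\ast(r)\le\psi(r)$, so $\pi^\ast(r)=\psi(r)$.

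The crux is $\{\pi^\ast=\psi\}\subseteq T(S,K)$, equivalently the contrapositive: if $r\notin T(S,K)$ then $\pi^\ast(r)<\psi(r)$, i.e.\ some $w\in W_S$ satisfies $\psi(r+w)<\psi(r)$. When $r\notin K$ this follows from a covering argument, since the $W_S$-translates of $K$ cover $\R^n$ (after factoring out $\lin K$), so $r$ can be pulled back into $K$ where $\psi\le1<\psi(r)$. The substantive case is $r\in K$ with $K_r\cap S=\emptyset$. Here I would first reduce to a bounded situation: by Theorem~\ref{thm:S-free}, $L:=\lin K=\rec K$ is a rational lattice subspace, $\psi$ is constant along $L$ and factors as $\psi=\bar\psi\circ\proj_{L^\perp}$ with $\bar\psi$ the gauge of the polytope $\proj_{L^\perp}(K)$, while $\proj_{L^\perp}(\Z^n)$ is a lattice by Proposition~\ref{prop:proj}; thus the claim descends to the case of a maximal lattice-free polytope in $L^\perp$. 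In that bounded case I would argue from maximality: growing the homothet $r+tK$ from $t=1-\psi(r)$ until it first meets $S$, at parameter $t^\ast$ and a point $s^\ast\in S$, produces an $S$-free homothet of $K$ that touches $S$ in the relative interior of one of its facets; pairing $s^\ast$ with the lattice point that Theorem~\ref{thm:S-free} places in the relative interior of the corresponding facet of $K$ yields a vector of $W_S$, and the maximality of $K$ forces this vector to carry $r$ into $\intr(\psi(r)K)$. I expect this last step --- turning ``the shrunken copy $K_r$ misses $S$'' into an explicit $W_S$-translation that decreases $\psi$, uniformly across the facet combinatorics of $K$ --- to be the main obstacle, as it is precisely where the maximality of $K$ and the arithmetic of $W_S$ must be used together; every other step relies only on subadditivity, periodicity modulo $W_S$, and validity.
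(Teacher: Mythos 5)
The survey does not actually prove this theorem---it only states it and cites~\cite{bcccz}---so your proposal has to be judged on its own merits rather than against a written proof. The skeleton $T(S,K)\subseteq T_\psi\subseteq\{\pi^\ast=\psi\}\subseteq T(S,K)$ is sound only in part. Your two preliminary facts ($\pi\le\psi$ for every minimal lifting, via reclassifying integer columns as continuous, and $\pi\le\pi^\ast$ via Proposition~\ref{prop:periodic}) are correct, and your proof of $T(S,K)\subseteq T_\psi$ (the instance $R=[\,s-r\,]$, $P=[\,r\,]$ combined with $\pi\le\psi$) is exactly the standard argument; the middle inclusion is also fine. The genuine gap is that the third inclusion $\{\pi^\ast=\psi\}\subseteq T(S,K)$ is \emph{false} at the stated level of generality. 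For $S=(b+\Z^n)\cap Q$ one has $W_S=\lin(\conv(S))\cap\Z^n$, which can be a lattice of any dimension between $0$ and $n$; if $\conv(S)$ is pointed (e.g.\ $Q$ a pointed rational cone), then $W_S=\{0\}$, so $\pi^\ast=\psi$ identically and $\{\pi^\ast=\psi\}=\R^n$, whereas $T(S,K)$ is a finite union of spindles and in general a proper subset of $\R^n$. Even for the truncated wedge of Figure~\ref{fig:region3}, $T(S,K)$ is a finite union of polytopes while $W_S$ is only one-dimensional, so any $r$ far from $K$ that minimizes $\psi$ over its coset $r+W_S$ satisfies $\pi^\ast(r)=\psi(r)$ yet lies outside $T(S,K)$. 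Hence the cycle cannot be closed through $\pi^\ast$: for $r\notin T(S,K)$ one must exhibit a minimal lifting with $\pi(r)<\psi(r)$ by a construction that does not rest on $W_S$-periodicity alone.

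Two further errors compound this. First, $\psi\ge0$ is not true here: Theorem~\ref{thm:S-free} only places $\rec(K\cap\conv(S))$ inside $\lin(K)$, not $\rec(K)=\lin(K)$, and for a wedge $K$ the function $\psi=\max_{i\in I}a_i(\cdot)$ is negative on the interior of $\rec(K)$. Consequently $T(S,K)\not\subseteq K$---the survey's own Figure~\ref{fig:region2} exhibits $T(s_1)$ as an unbounded strip---so your ``clean description'' $T(S,K)=\{r\in K: K_r\cap S\ne\emptyset\}$ and the ensuing case split $r\notin K$ versus $r\in K$ are wrong. Second, the assertion that the $W_S$-translates of $K$ cover $\R^n$ (modulo $\lin K$) fails for the same wedge examples. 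Finally, even in the one setting where your framework is viable ($S=b+\Z^n$, so $W_S=\Z^n$, $\psi\ge0$ and $T(S,K)\subseteq K$), the crux step---producing $w\in\Z^n$ with $\psi(r+w)<\psi(r)$ whenever the homothet $r+(1-\psi(r))K$ misses $S$---is only sketched, and you acknowledge it is the main obstacle; as written, the proposal does not contain a proof of the hard inclusion.
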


Figure~\ref{fig:regions} illustrates the region $T_\psi$ for several examples. We collect some basic properties of the lifting region that were presented in~\cite{bcccz}. Define $L_K = \{r \in \R^n\colon a_i r = a_j r \mbox{ for all } i,j \in I\}.$

\begin{figure}
\centering \subfigure[A maximal $(b+\Z^2)$-free triangle with three integer points] {\label{fig:region1}
\includegraphics[height=2.5in]{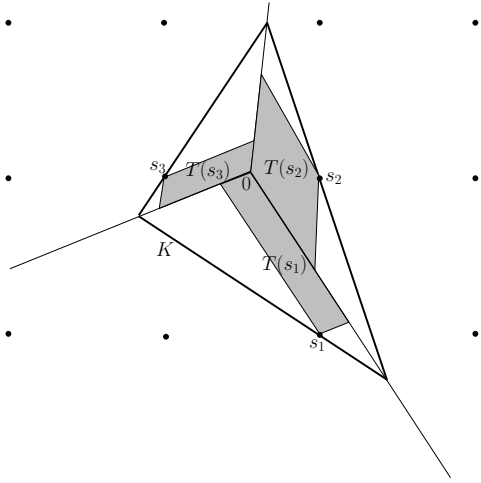}}
\hspace{0.5in}\subfigure[A maximal $(b+\Z^2)$-free triangle with integer
vertices] {\label{fig:region4}
\includegraphics[height=2.0in]{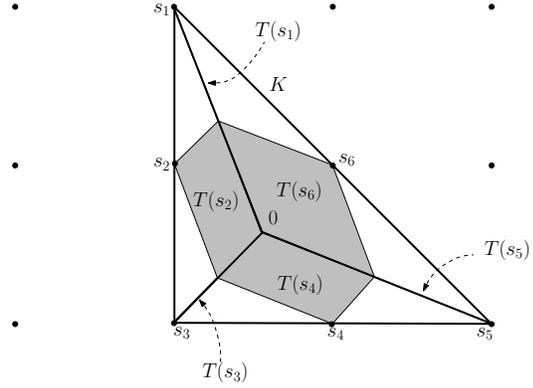}}
\hspace{0.5in}\subfigure[A wedge] {\label{fig:region2}
\includegraphics[height=2.5in]{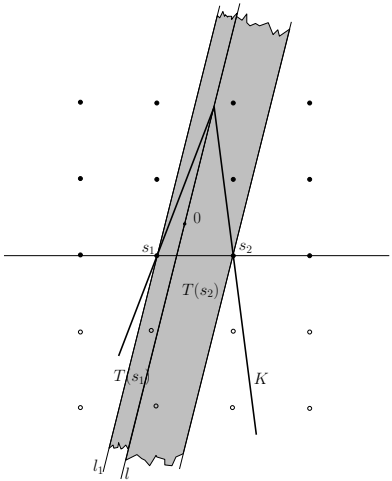}}
\hspace{0.5in} \subfigure[A truncated wedge] {\label{fig:region3}
\includegraphics[height=3.0in]{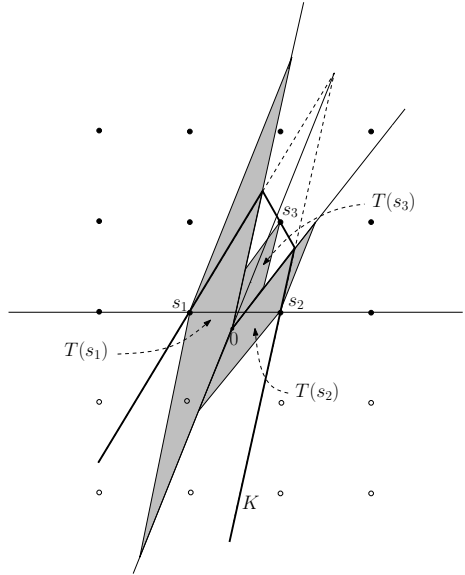}} \caption{Regions $T(s)$ for some
maximal $S$-free polyhedra $K$ in the plane and $s \in S \cap K$. In Figures~\ref{fig:region1} and~\ref{fig:region4}, $S = b+\Z^2$ for some $b \not\in \Z^2$ and in Figures~\ref{fig:region2} and~\ref{fig:region3} $S = (b+\Z^2) \cap H$ where $H$ is a half-space shown in the figures (dark circles show points from $S$ and hollow circles show points in $b+\Z^2$ that are not in $S$). The thick
dark line indicates the boundary of $K$. For a particular $s$, the dark gray regions denote $T(s)$. The jagged lines in a region indicate that it extends to infinity. For example,
in Figure~\ref{fig:region2}, $T(s_1)$ is the strip
between lines $l_1$ and $l$. Figure~\ref{fig:region4} shows an
example where $T(s)$ is full-dimensional for $s_2, s_4, s_6$, but is
not full-dimensional for $s_1, s_3, s_5$.}\label{fig:regions}
\end{figure}

\begin{prop}\label{prop:basic-facts} Assume  $S=(b+\Z^n)\cap Q$, where $b\notin\Z^n$ and $Q$ is a rational polyhedron, and let  $K$ be a maximal $S$-free polyhedron with $0\in\intr(K)$. The following hold:
\begin{enumerate}[\upshape(i)]
\item $\lin(T(s)) = \rec(T(s)) =L_K$ for every $s \in K\cap S$.
\item $T(s)=T(s')$ for every $s,s'\in \R^n$ such that $s-s'\in L_K$.
\item $T(S,K)$ is a union of finitely many polyhedra.
\end{enumerate}
\end{prop}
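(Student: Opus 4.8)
The plan is to read all three parts off the inequality description of $T(s)$. First note that each $T(s)$ is a polyhedron, and that it does not depend on the choice of the tight index $k(s)$: if $a_k s = a_{k'} s = 1$, then any $r$ in the set computed with $k$ satisfies both $(a_{k'}-a_k)r\le 0$ and $(a_{k'}-a_k)(s-r)\le 0$, and since $(a_{k'}-a_k)s=0$ these force $(a_{k'}-a_k)r=0$, so the two descriptions agree. For (i), rewrite $T(s)=\{r:(a_i-a_{k(s)})s\le (a_i-a_{k(s)})r\le 0,\ i\in I\}$; its recession cone is $\{d:(a_i-a_{k(s)})d=0,\ i\in I\}=\{d:a_id=a_jd\ \forall i,j\in I\}=L_K$. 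Since this cone is a linear subspace, $\lin(T(s))=\rec(T(s))=L_K$.

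For (ii), the key identity is that $s-s'\in L_K$ yields $(a_i-a_{k(s)})(s-s')=0$ for all $i\in I$ (take $j=k(s)$ in the definition of $L_K$). When $s,s'\in K\cap S$ both lie on $\bd(K)$, this common value is $0$, so $a_{k(s)}s'=1$ and $k(s)$ is a legitimate tight index for $s'$ as well. Taking $k(s')=k(s)$, the identity gives $(a_i-a_{k(s)})(s'-r)=(a_i-a_{k(s)})(s-r)$ for every $i$, so the two inequality systems coincide and $T(s)=T(s')$; by the independence noted above this conclusion does not depend on any choices.

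For (iii), part (ii) shows that $T(s)$ depends only on the coset $s+L_K$, so the family $\{T(s):s\in K\cap S\}$ has at most $\lvert\proj_{L_K^\perp}(K\cap S)\rvert$ distinct members; proving this projection finite yields that $T(S,K)$ is a finite union of polyhedra. Boundedness is the easy half: $K\cap S\subseteq K\cap\conv(S)$, and by Theorem \ref{thm:S-free}(i) the recession cone of $K\cap\conv(S)$ is contained in $\lin(K)\subseteq L_K$, so projecting onto $L_K^\perp$ annihilates every recession direction and $\proj_{L_K^\perp}(K\cap S)$ is bounded. For finiteness I would combine this with $K\cap S\subseteq b+\Z^n$: if $L_K$ is a lattice subspace, then $\proj_{L_K^\perp}(b+\Z^n)$ is discrete by Proposition \ref{prop:proj}, and a bounded subset of a discrete set is finite.

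The main obstacle is precisely this last point, namely establishing that $L_K$ is a lattice subspace (equivalently, rational). Parts (i) and (ii) are bookkeeping with the facet normals, and the boundedness in (iii) is immediate from the structure theorem; but discreteness of the relevant lattice projection must be extracted from the maximality of $K$. The rationality of $\rec(K\cap\conv(S))$ in Theorem \ref{thm:S-free}(i) controls the genuine recession directions of $K$, yet one still has to exclude irrational lineality directions of $K$ --- for instance, a boundary hyperplane meeting $b+\Z^n$ in a set whose $L_K^\perp$-projection is dense. I expect ruling this out to require the full strength of the classification of maximal $S$-free sets, and this is the step on which I would spend the most care.
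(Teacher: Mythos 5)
Parts (i) and (ii) are correct and match the paper's argument; your extra observation that $T(s)$ is independent of the choice of tight index $k(s)$ is a nice (and valid) piece of bookkeeping that the paper leaves implicit.

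Part (iii) has a genuine gap, and it is exactly the one you flag at the end: your counting argument quotients $K\cap S$ by the subspace $L_K$, and therefore needs $\proj_{L_K^\perp}(b+\Z^n)$ to be discrete, i.e.\ needs $L_K$ to be a lattice subspace. Nothing in Theorem~\ref{thm:S-free} gives you that: the facet normals $a_i$ of a maximal $S$-free polyhedron need not be rational, so $L_K=\{r: a_ir=a_jr\ \forall i,j\in I\}$ can a priori be an irrational subspace, and you have no route to excluding this. The point you are missing is that you do not need to quotient by all of $L_K$. The paper instead takes $L:=\langle\rec(K\cap\conv(S))\rangle$, which Theorem~\ref{thm:S-free}(i) guarantees is a \emph{rational} subspace contained in $\lin(K)\subseteq L_K$. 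Because $L\subseteq L_K$, two points of $K\cap S$ with the same projection onto $L^\perp$ differ by an element of $L_K$, so part (ii) still collapses their spindles; because $L$ contains $\rec(K\cap\conv(S))$, the projection of $K\cap\conv(S)$ onto $L^\perp$ is bounded; and because $L$ is a lattice subspace, $\proj_{L^\perp}(b+\Z^n)$ is a translated lattice, hence discrete, so the bounded projection of $K\cap S$ is finite. In short: replace $L_K$ by the smaller rational subspace $L$ throughout your finiteness argument and the "main obstacle" you identify disappears --- no classification of irrational lineality directions of $K$ is required.
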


\begin{proof}
\begin{enumerate}[(i)]
\item From the description of $T(s)$, $$\rec(T(s)) = \{r\in\R^n\colon (a_i - a_{k(s)}) r \leq 0, \;\; (a_{k(s)} - a_i) r \leq 0 \; \mbox{ for all } i \in I\}$$ where ${k(s)}$ is the index of the facet of $K$ containing $s$. Hence, for every $r\in \rec(T(s))$, we obtain $a_i r = a_{k(s)} r$ for all $i\in I$. This shows that $\rec(T(s)) = \lin(T(s)) = L_K$.
\item Observe that
$$\begin{array}{rcl}  r \in T(s') & \Leftrightarrow & (a_i - a_{k(s')}) r \leq 0, \;\; (a_i - a_{k(s')}) (s' -r) \leq 0 \;\;\quad \forall i \in I \\
& \Leftrightarrow & (a_i - a_{k(s')}) r \leq 0, \;\; (a_i - a_{k(s')}) (s' + (s - s') - r) \leq 0 \;\;\quad \forall i \in I \\
& \Leftrightarrow & (a_i - a_{k(s)}) r \leq 0, \;\; (a_i - a_{k(s)}) (s - r) \leq 0 \;\;\quad \forall i \in I \\
& \Leftrightarrow & r \in T(s)
\end{array}$$
where the second equivalence follows from the fact that $s - s' \in L_K$ and so $a_i (s-s') = a_k (s-s')$, and in the third equivalence we use the fact that $k(s)$ and $k(s')$ can be chosen to be the same as $s-s' \in L_K$.
\item
 Since $K$ is full-dimensional,  $K$ satisfies either condition (i) or condition (ii) of Theorem \ref{thm:S-free}. In both cases, if we define $L:=\langle\rec(K\cap \conv(S))\rangle$, then $L$ is a lattice subspace contained in $\lin(K)$. Since $\lin(K)=\{r\in \R^n\colon a_ir=0 \mbox{ for all } i\in I \}$, we have that $L\subseteq \lin(K)\subseteq L_K$.


Since $L=\langle\rec(K\cap\conv(S))\rangle$, it follows that $\proj_{L^\perp}(K)\cap \proj_{L^\perp}(\conv(S))$ is a polytope.

Given two elements $s,s'\in S$ whose orthogonal projections onto $L^\perp$ coincide, it follows that $s-s'\in L\subseteq L_K$, and therefore by (ii) $T(s)=T(s')$. Then the number of sets $T(s)$, $s\in S\cap K$, is at most the cardinality of $\proj_{L^\perp}(S\cap K)$.

Let $S'$  and $b'$ be the orthogonal projections of $S$ and $b$ onto $L^\perp$. Since $L$ is a lattice subspace, $S' = (b' + \Lambda)\cap \proj_{L^\perp}(\conv(S))$, where $\Lambda$ is a lattice in $L^\perp$.   Since $\proj_{L^\perp}(K)\cap \proj_{L^\perp}(\conv(S))$ is a polytope, $\proj_{L^\perp}(K)\cap \proj_{L^\perp}(\conv(S))\cap (b'+\Lambda)$ is finite. Furthermore, since $\proj_{L^\perp}(S\cap K)\subseteq \proj_{L^\perp}(K)\cap \proj_{L^\perp}(\conv(S))\cap (b'+\Lambda)$, it follows that $\proj_{L^\perp}(S\cap K)$ is a finite set.

We conclude that the family of polyhedra $T(s)$, $s\in S\cap K$, has a finite number of elements, thus $T(S,K)=\bigcup_{s\in S \cap K}T(s)$ is the union of a finite number of polyhedra.
\end{enumerate}
\end{proof}

\subsubsection{The covering property}

We assume here $S=(b+\Z^n)\cap Q$, where $Q$ is a rational polyhedron. As mentioned earlier, the minimal valid functions for $S$ are in one-to-one correspondence with maximal $S$-free convex sets containing the origin in their interior. For any such maximal $S$-free convex set $K$, we refer to the lifting region $T_\psi$ for the minimal cut-generating function $\psi$ corresponding to $K$ by $T(S,K)$, to emphasize the dependence on $S$ and $K$. We say $T(S,K)$ has the {\em covering property} if $T(S,K) + W_S = \R^n$. When $S$ is clear from the context, we will also say that $K$ has the {\em covering property} if $T(S,K)$ has the covering property. 

\paragraph{Results of this section and their importance for discrete optimization.} The main results presented in this section are three operations that preserve the covering property, namely, translations, the so-called {\em coproduct} and {\em limit} operations. Moreover, a necessary and sufficient condition is presented in Theorem~\ref{thm:unique-integer} to characterize which pyramids in a particular family have the covering property.

The importance of these results in terms of cutting planes is the following. The pyramids in Theorem~\ref{thm:unique-integer} and classification of the covering property in $\R^2$ (see~\cite{basu-paat-lifting}) provide a ``base set" of maximal $S$-free convex sets with the covering property. By iteratively applying the three operations of translations, coproducts and limits, we can then build a vast (infinite) list of maximal $S$-free convex sets (in arbitrarily high dimensions) with the covering property, enlarging this ``base set". Not only does this recover all the previously known sets with the covering property, it vastly expands this list. Earlier, ad hoc families of $S$-free convex sets were proven to have the covering property ---now we have generic operations to construct infinitely many families. These make a contribution in the modern thrust on obtaining efficiently computable formulas for computing cutting planes, by giving a much wider class of maximal $S$-free sets whose lifting regions have the covering property.

\paragraph{The covering property is preserved under translations.} Let $K$ be a maximal $S$-free polyhedron with the origin in its interior. In~\cite{basu-paat-lifting}, Basu and Paat prove the following theorem.

\begin{theorem}\label{thm:invariant} Let $t \in \R^n$ be such that $K+t$ also contains the origin in its interior. Then, $T(S,K) + W_S = \R^n$ if and only if $T(S+t, K+t) + W_{S+t} = \R^n$.  \end{theorem}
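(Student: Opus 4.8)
The plan is to reduce the two covering conditions to one another by transporting through the translation $x\mapsto x+t$ all the data that define them, exploiting that the only genuinely global object involved, the set $W_S$, is translation invariant. I would open by recording two easy structural facts. First, $W_{S+t}=W_S$: writing an arbitrary element of $S+t$ as $s+t$ with $s\in S$, the defining requirement $s+t+\lambda w\in S+t$ for all $s\in S$ and $\lambda\in\Z$ is literally $s+\lambda w\in S$, so the two sets coincide; call this common set $W$. Second, since a translation is an affine bijection preserving interiors, convexity, and the absence of points of a prescribed set in the interior, $K+t$ is a maximal $(S+t)$-free convex set, it contains the origin in its interior by hypothesis, and $s\mapsto s+t$ is a bijection of $K\cap S$ onto $(K+t)\cap(S+t)$ carrying the facet of $K$ through $s$ to the facet of $K+t$ through $s+t$. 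Thus if $a_ix\le 1,\ i\in I$ is the irredundant description of $K$, the index set $I$ also describes $K+t$, and we may take $k(s+t)=k(s)$. With these in place, the theorem is the assertion that $\bigl(\bigcup_{s\in K\cap S}T(s)\bigr)+W=\R^n$ holds for $(S,K)$ if and only if the analogous union holds for $(S+t,K+t)$.

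The heart of the matter is to understand how the spindle $T(s)=C_{k(s)}\cap\bigl(s-C_{k(s)}\bigr)$, where $C_k=\{r\st (a_i-a_k)r\le 0,\ i\in I\}=\cone(F_k)$ is the cone with apex at the origin over the facet $F_k$, deforms when $K$ is replaced by $K+t$. The natural first attempt is the radial boundary map $\Theta(r)=r+\psi_K(r)\,t$, which is linear on each cone $C_k$ (there it equals $M_k:=I+t\,a_k^{\top}$, with $\det M_k=1+a_kt>0$ since $0\in\intr(K+t)$), is continuous across the shared boundaries of the cones, sends the ray through $x\in\bd K$ to the ray through $x+t\in\bd(K+t)$ with the same radial parameter, and is therefore a homeomorphism of $\R^n$ carrying each origin-cone $C_k$ onto the corresponding cone $C_k^{t}=\cone(F_k+t)$ for $K+t$. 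This map handles the origin-based half of every spindle exactly.

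The main obstacle is that $\Theta$ is adapted to the origin but not to $s$: because it is radial about the origin rather than about $s$, it does not send the $s$-based half $s-C_{k(s)}$ onto $(s+t)-C_{k(s)}^{t}$, so it fails to carry individual spindles to spindles; and, crucially, since $\psi_K$ is positively homogeneous rather than $W$-periodic, $\Theta$ does not commute with translation by $W$, so the clean identity $\Theta(A+W)=\Theta(A)+W$ that would immediately transport the covering property is unavailable. The cones genuinely change shape under re-basing, and this is the real content of the theorem: the covering property does not depend on which interior point of the maximal free set is taken as basepoint. To overcome this I would abandon moving the region rigidly and instead transport covering certificates directly: given $x$ with $x-w\in T(s)$ for some $s\in K\cap S$ and $w\in W$ (equivalently $a_{k(s)}(x-w)=\psi_K(x-w)$ and $a_{k(s)}(s-(x-w))=\psi_K(s-(x-w))$), I would exhibit, using the preserved facet incidence $k(s+t)=k(s)$, the finiteness of the spindle family from Proposition~\ref{prop:basic-facts}(iii), and the reduction modulo $W$ afforded by Proposition~\ref{prop:periodic} (minimal liftings, and hence the relevant structure of the lifting region, are periodic modulo $W$, so membership in $T(S,K)+W$ depends only on the class of $x$ in a bounded fundamental domain of $W$), a corresponding certificate for $(S+t,K+t)$, and symmetrically with $-t$. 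I expect the compatibility of the reshaped spindles with the lattice-like action of $W$—matching the two finite unions facet by facet over a fundamental domain—to be the crux, with everything else being bookkeeping built on the two structural reductions above.
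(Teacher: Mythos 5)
Your structural reductions are correct and match the paper's: $W_{S+t}=W_S$, the bijection $s\mapsto s+t$ between $K\cap S$ and $(K+t)\cap(S+t)$ with preserved facet incidence $k(s+t)=k(s)$, and the facet-wise linear pieces $M_k=I+t\,a_k^{\top}$ are exactly the maps $f_k$ the paper uses. You also correctly diagnose why the radial map $\Theta(r)=r+\psi_K(r)t$ fails. But the step you then defer --- ``exhibit a corresponding certificate for $(S+t,K+t)$'' and ``match the two finite unions facet by facet over a fundamental domain'' --- is not bookkeeping; it is the entire theorem, and the proposal contains no mechanism to carry it out. Transporting certificates gives you, for each point of $T(S,K)+W_S$, \emph{some} point of $T(S+t,K+t)+W_S$ (namely its image under the relevant affine map), i.e.\ it shows that the image of a certain correspondence lands inside the new union. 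What you need is the reverse: that \emph{every} point of $\R^n$ is hit. Since each spindle $T(s)$ is replaced by a genuinely different polyhedron $T(s+t)$, with the deformation varying from spindle to spindle, there is no elementary reason why a family of polyhedra that covers $\R^n$ modulo $W_S$ still covers it after each member is replaced by its own affine image; in general such a statement is false.

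The paper closes exactly this gap with a global topological argument. It defines, on each translate $K_{s,w}=T(s)+w$, the affine map $f_{s,w}(x)=f_{k(s)}(x-w)+w$; verifies that these agree on all pairwise intersections, so that (using the covering hypothesis $\bigcup_{s,w}K_{s,w}=\R^n$) they stitch into a single well-defined map $f\colon\R^n\to\R^n$; shows $f$ is injective and continuous with $f(T(s)+w)=T(s+t)+w$, hence $f(\R^n)=T(S+t,K+t)+W_S$; and then invokes Invariance of Domain (Theorem~\ref{thm:invariance-domain}) to conclude $f(\R^n)$ is open, while it is also closed (a finite union of polyhedra translated by the lattice $W_S$) and nonempty, so by connectedness of $\R^n$ it equals $\R^n$. (Earlier special cases used volume arguments instead.) Your proposal supplies the local affine pieces but neither the compatible global assembly nor the topological (or volumetric) surjectivity argument, so as written it does not prove the theorem.
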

In other words, the covering property is preserved under translations.

This theorem is not obvious, since both the function $\psi$  defined according to~\eqref{eq:psi-formula} and the set $T(S,K)$ change in a non-trivial way when we translate $S$ and $K$.  For the case $S=b+\Z^n$, this result was first proved when $K$ is a maximal $S$-free {\em simplicial} polytope~\cite{basu2012unique} and then for any maximal $S$-free polyhedron in~\cite{averkov-basu-lifting}.
Both proofs are based on volume arguments, which do not seem easily extendable to the more general case $S=(b+\Z^n)\cap Q$ for a rational polyhedron $Q$.
The generalization to this setting was obtained in~\cite{basu-paat-lifting} by using the {\em invariance of domain} as the main tool. This is an important result in algebraic topology, first proved by Brouwer~\cite{brouwer1911beweis, Dold1995}.

\begin{theorem}[Invariance of Domain]\label{thm:invariance-domain}
If $U$ is an open subset of $\R^n$ and $f \colon U \to \R^n$ is an injective, continuous map, then $f(U)$ is open and $f$ is a homeomorphism between $U$ and $f(U)$.
\end{theorem}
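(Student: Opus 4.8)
The plan is to prove this classical fact by the standard route through singular homology, reducing the global statement to a purely local computation about the homology of complements of embedded spheres and disks in $S^n$. It suffices to show that $f$ is an \emph{open} map: once we know that every open $V\subseteq U$ has open image $f(V)$, the set $f(U)$ is open, $f$ is a continuous open bijection onto $f(U)$, its inverse is automatically continuous, and the homeomorphism conclusion follows. To prove openness it is enough to show that for each $x\in U$ the image $f(U)$ contains a neighborhood of $f(x)$. So I would fix $x$, choose a small closed ball $B=\overline{B}(x,r)\subseteq U$ with boundary sphere $\Sigma=\partial B\cong S^{n-1}$. Since $B$ is compact and $f$ is a continuous injection into the Hausdorff space $\R^n$, the restriction $f|_B$ is a homeomorphism onto its image; thus $f(B)$ is an embedded $n$-disk and $f(\Sigma)$ an embedded $(n-1)$-sphere. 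Viewing $\R^n\subseteq S^n$ via one-point compactification, the goal becomes $f(x)\in\intr(f(B))$, equivalently that $f(\intr B)$ is open in $S^n$.

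The heart of the argument is two homological facts, proved together by induction. First, if $D\subseteq S^n$ is homeomorphic to a $k$-disk, then $\tilde H_i(S^n\setminus D)=0$ for all $i$. Second, if $\Sigma'\subseteq S^n$ is homeomorphic to $S^k$ with $0\le k<n$, then $\tilde H_i(S^n\setminus\Sigma')$ equals $\Z$ for $i=n-k-1$ and $0$ otherwise. Both are established by a Mayer--Vietoris induction: one writes the embedded disk (resp.\ sphere) as a union of two smaller pieces overlapping in a disk of one lower dimension, applies Mayer--Vietoris to the two complements, and invokes the inductive hypothesis. Applying the second fact with $k=n-1$ to $\Sigma'=f(\Sigma)$ shows that $S^n\setminus f(\Sigma)$ has exactly two path components (as $\tilde H_0=\Z$), while the first fact applied to $D=f(B)$ shows that $S^n\setminus f(B)$ is path-connected. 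Now $S^n\setminus f(\Sigma)=(S^n\setminus f(B))\sqcup f(\intr B)$, and both pieces are connected, so they are precisely the two components. Since $S^n$ is a manifold, it is locally path-connected, so each component of the open set $S^n\setminus f(\Sigma)$ is open; in particular $f(\intr B)$ is open and contains $f(x)$. This proves that $f$ is an open map and finishes the proof.

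I expect the main obstacle to be establishing the homological computations above rigorously, rather than the formal deduction of openness from them. The delicate point in the induction is ruling out a nonzero homology class in the complement: one subdivides the embedded disk into smaller and smaller pieces and argues that a cycle that is nontrivial in every complement would have to be ``supported'' arbitrarily close to the image, contradicting the compactness of the embedded disk together with the comparison maps in the Mayer--Vietoris sequences. Getting this limiting argument and the bookkeeping of the exact sequences right is where essentially all the real work lies; once the two homological facts are in hand, the passage to the local statement and then to the global openness and homeomorphism conclusions is routine.
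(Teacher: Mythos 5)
The paper does not prove this statement at all: it is quoted as a classical black-box result, with the proof attributed to Brouwer and to Dold's textbook. So there is no argument in the paper to compare yours against, and any correct proof you supply is strictly more than the source provides.

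Your outline is the standard singular-homology proof (essentially Theorem 2B.3 in Hatcher, resting on his Proposition 2B.1), and it is correct as a sketch. The reduction to openness, the use of compactness of $\overline{B}(x,r)$ plus Hausdorffness to get that $f$ restricted to the ball is an embedding, the passage to $S^n$, and the deduction that $f(\intr B)$ is one of the two components of $S^n\setminus f(\Sigma)$ (hence open, by local path-connectedness of $S^n$) are all sound; you also correctly handle the small points that $S^n\setminus f(B)$ is nonempty (it contains the point at infinity) and that $\tilde H_0\cong\Z$ means exactly two path components. The only caveat is the one you flag yourself: the two homological facts about complements of embedded disks and spheres are stated but not proved, and the disk case genuinely requires the nested-subdivision/direct-limit argument to kill a putative nontrivial cycle --- that is where essentially all of the work in a complete proof would live. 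As a proof proposal this is the right architecture with the hard kernel correctly identified; as a complete proof it would need that induction written out. Given that the surveyed paper treats the theorem as citable classical background, your level of detail is already more than adequate for the role the statement plays there.
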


An outline of the proof of the invariance of the lifting region under translations when $S=(b+\Z^n)\cap Q$ (Theorem~\ref{thm:invariant}) is now provided.
We define $S'=S+t$ and $K'=K+t$.

\begin{enumerate}
\item Let $\{x \in \R^n\colon a_ix\le1,\,i\in I\}$ be an irredundant description of a maximal $S$-free polyhedron $K$ with $0\in\intr(K)$.
 For each $k\in I$, define the affine function $f_k$ that maps the affine hyperplane $H= \{r \in \R^n\colon a_k r = 1\}$ to $H+t$.
\item For every $s \in K\cap S$ and $w \in W_S$, define the polyhedron $K_{s,w} = T(s) + w$ and define the map $f_{s,w}\colon K_{s,w} \to \R^n$ as $f_{s,w}(x) = f_{k(s)}(x-w) + w$, where ${k(s)} \in I$ is such that $a_{k(s)} s = 1$. Since $T(S,K) + \Z^n = \R^n$, we have

\begin{equation*}
\bigcup_{s \in K\cap S, w \in W_S} K_{s,w} = T(S,K) + \Z^n = \R^n.
\end{equation*}

\item It is shown that for any two pairs $s_1, w_1$ and $s_2, w_2$ we have that $f_{s_1, w_1}(x) = f_{s_2, w_2}(x)$ for all $x \in K_{s_1,w_1}\cap K_{s_2,w_2}$. Thus, the different $f_{s,w}$'s can be ``stitched together" to give a well-defined map $f \colon\R^n \to \R^n$ such that $f$ restricted to $K_{s,w}$ is equal to $f_{s,w}$. Since each $f_{s,w}$ is an invertible affine map on $K_{s,w}$, and any bounded set intersects only finitely many polyhedra $K_{s,w}$, it can be shown that $f$ is an injective continuous map on $\R^n$.

\item It is also shown that $f_{s,w}(T(s) + w) = T(s+t) + w$ for every $s \in S\cap K$ and $w \in W_S$. In other words, the affine function $f_{s,w}$ maps a spindle in $T(S,K)$ (translated by the vector $w$) into the corresponding spindle in $T(S',K')$ (translated by the same vector $w$).

\item By Theorem~\ref{thm:invariance-domain}, $f(\R^n)$ is open. On the other hand, $f(\R^n) = T(S', K') + W_S$ can be shown to be closed because $T(S', K')$ is the union of finite many polyhedra translated by a lattice $W_S = \lin(\conv(S)) \cap \Z^n$. Since $\R^n$ is connected, the only non-empty closed and open subset of $\R^n$ is $\R^n$ itself. Thus, $f(\R^n) =\R^n$.

\item Finally one observes that $W_S = \lin(\conv(S)) \cap \Z^n = W_{S'}$.  Therefore, $$\begin{array}{rcl} T(S',K') + W_{S'} & = & T(S',K') + W_S \\
& = & \bigcup_{s' \in K'\cap S', w \in W_S} T(s') + w \\
& = & \bigcup_{s \in K\cap S, w \in W_S} T(s + t) + w \\
& = & \bigcup_{s \in K\cap S, w \in W_S} f_{s,w}(T(s) + w) \\
& = & f\big( \bigcup_{s \in K\cap S, w \in W_S} (T(s) + w)\big) \\
& = & f(T(S,K) + W_S) \\
& = & f(\R^n) \\
& = & \R^n
\end{array}
$$
where the fourth equality follows from Step 3.
\end{enumerate}

\paragraph{The coproduct and limit operations preserve the covering property.}

We define an operation on polytopes that preserves the covering property. Namely, given two polytopes $K_1\in \R^{n_1}$ and $K_2\in \R^{n_2}$ with $K_i$ containing the origin in its interior for $i=1,2$, we define the {\em coproduct} $K_1 \diamond K_2$ as follows. Let $K_i^\bullet$ be the (inclusion-wise) smallest prepolar for $K_i$, $i=1,2$. Define $K_1 \diamond K_2 = (K_1^\bullet \times K_2^\bullet)^\circ$ where we remind the reader that $V^\circ$ denotes the polar of a set $V$, and $X \times Y$ denotes the cartesian product of $X$ and $Y$. Let $n = n_1 + n_2$ and for $i\in \{1,2\}$, let $S_i = (b_i+\Z^{n_i})\cap Q_i$, where $Q_i \subseteq \R^{n_i}$ is a rational polyhedron and $b_i \in \R^{n_i}\setminus\Z^{n_i}$. Then $S_1\times S_2 = ((b_1,b_2)+(\Z^{n_1}\times \Z^{n_2}))\cap (Q_1\times Q_2)$. Therefore, it is reasonable to speak of $S_1\times S_2$-free convex sets. If $K_i$ is maximal $S_i$-free such that $T(S_i, K_i)$ has the covering property for $i\in \{1,2\}$, then for any $0\le \mu \le 1$, $\frac{K_1}{\mu}\diamond \frac{K_2}{1-\mu}$ is maximal $S_1\times S_2$-free and $T(S_1 \times S_2,\frac{K_1}{\mu}\diamond \frac{K_2}{1-\mu})$ has the covering property. This is proved in~\cite{basu-paat-lifting}; it was first shown in~\cite{averkov-basu-lifting} for the case when $S_i=b_i+\Z^{n_i}$ for $i\in\{1,2\}$. This is a very useful operation to create higher dimensional maximal $S$-free convex sets with the covering property by ``gluing'' together lower dimensional such sets.

When $S=(b+\Z^n)\cap Q$ for some rational polyhedron $Q$, it is shown in~\cite{basu-paat-lifting} that if a sequence of maximal $S$-free convex sets, all of whose lifting regions have the covering property, converges (in some precise sense) to a maximal $S$-free convex set, then the limit set also has the covering property. This is a generalization of a result from~\cite{averkov-basu-lifting}.

\paragraph{Special polytopes that have the covering property.} In this last part we assume $S=b+\Z^n$. We define a pyramid as the convex hull of an $(n-1)$-dimensional polytope $B$ and a point $v\notin\aff(B)$. $v$ is called the {\em apex} and $B$ is the {\em base} of the pyramid.

\begin{theorem}\label{thm:unique-integer}\cite{averkov-basu-lifting} Assume  $S=b+\Z^n$ for some $b\not\in\Z^n$, and let $K$ be a maximal $S$-free pyramid in $\R^n$ ($n \geq 2$) such that every facet of $K$ contains exactly one point from $S$ in its relative interior. $K$ has the covering property if and only if $K$ is the image of $\conv\{0, ne^1, \ldots, ne^n\}$ under an affine unimodular transformation.
\end{theorem}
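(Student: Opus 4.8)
The plan is to combine the spindle description of the lifting region with the invariance results already at hand, and then to settle the equivalence by a volume computation. Since $S=b+\Z^n$ we have $\conv(S)=\R^n$, so from \eqref{eq:W} one gets $W_S=\Z^n$, and the covering property reads $T(S,K)+\Z^n=\R^n$ with $T(S,K)=\bigcup_{s\in K\cap S}T(s)$. Because $K$ is $S$-free and every facet of $K$ contains exactly one point of $S$ in its relative interior, the set $K\cap S$ is finite and consists of exactly one point per facet; hence $T(S,K)$ is a finite union of spindles. Moreover, since $K$ is a bounded polytope, $\lin(K)=\{0\}$, and by Proposition~\ref{prop:basic-facts}(i) each spindle $T(s)$ has trivial lineality space $L_K=\{0\}$ and is therefore bounded, so each $\vol(T(s))$ is finite.

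For the ``if'' direction I would first record that the covering property is invariant under affine unimodular transformations. Invariance under translations is precisely Theorem~\ref{thm:invariant}. For a unimodular linear map $U\in GL_n(\Z)$, the description $\{x : a_i x\le 1\}$ of $K$ becomes $\{x : a_iU^{-1}x\le 1\}$, so each spindle $T(s)$ is carried to $U\,T(s)$, while $W_{US}=\Z^n=U\Z^n$; hence $T(US,UK)+\Z^n=U\bigl(T(S,K)+\Z^n\bigr)$, and the covering property is preserved. Composing, it is preserved under every affine unimodular transformation. The ``if'' direction then reduces to a single verification for $\Delta=\conv\{0,ne^1,\dots,ne^n\}$: using Theorem~\ref{thm:invariant} I would translate $\Delta$ so that $0\in\intr(\Delta)$ and $0\notin S$, identify the $n+1$ facet points of $S$ (the appropriate lattice translates of $\1,\1-e^1,\dots,\1-e^n$), write down the $n+1$ spindles explicitly from their defining inequalities, and check directly that their $\Z^n$-translates cover $\R^n$.

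For the ``only if'' direction I would argue by volume, in the spirit of the proofs in~\cite{basu2012unique,averkov-basu-lifting}. The key claim is that the translated spindle interiors $\intr(T(s))+w$, over all $s\in K\cap S$ and $w\in\Z^n$, are pairwise disjoint. Granting this, each point of $\R^n$ lies in the interior of at most one translated spindle, so the measure of $T(S,K)+\Z^n$ inside a fundamental domain of $\Z^n$ equals $\sum_{s\in K\cap S}\vol(T(s))$, which is therefore at most $1$; consequently the covering property is equivalent to $\sum_{s\in K\cap S}\vol(T(s))=1$. I would then compute this total volume from the explicit description of the spindles in terms of the facet points $s_0,\dots,s_m$, expressing $\vol(T(s))$ through the determinants of the facet normals. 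The outcome should be that the sum equals $1$ exactly when $K$ is a simplex whose facet points satisfy that $s_1-s_0,\dots,s_n-s_0$ form a basis of $\Z^n$; a pyramid over a non-simplex, or a simplex whose facet-point differences generate a proper sublattice, would yield a strictly smaller sum and hence cannot cover. Since a maximal lattice-free simplex with this unimodularity property is exactly an affine unimodular image of $\Delta$, both conclusions follow.

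The hardest part will be the disjointness of the translated spindle interiors together with the ensuing volume identity $\sum_s\vol(T(s))=1\iff\text{unimodular}$. Disjointness is where the one-point-per-facet hypothesis and the pyramid structure are essential, and proving it requires controlling how the minimal lifting $\pi^\ast$ and the gauge $\psi$ agree along the boundaries of the spindles so that no two lattice translates overlap in their interiors. Once disjointness is established, the equivalence with unimodularity becomes a determinant computation relating $\sum_s\vol(T(s))$ to the index of the sublattice generated by the facet-point differences, and the remaining steps are bookkeeping.
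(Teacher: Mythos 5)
Your ``if'' direction is essentially fine in spirit (reduce to $\conv\{0,ne^1,\dots,ne^n\}$ via unimodular invariance and verify one case, which is what the paper does by exhibiting translations for which a single spindle is a unimodular cube). But the ``only if'' direction rests on a claim that is false: the translated spindle interiors $\intr(T(s))+w$ are \emph{not} pairwise disjoint over all $s\in K\cap S$ and $w\in\Z^n$, so the covering property is \emph{not} equivalent to $\sum_{s\in K\cap S}\vol(T(s))=1$. Concretely, take $n=2$ and $K=\conv\{0,2e^1,2e^2\}-(\tfrac12,\tfrac12)$ with $b=-(\tfrac12,\tfrac12)$. The facet normals are $(1,1)$, $(-2,0)$, $(0,-2)$ and the three facet points are $(\tfrac12,\tfrac12)$, $(\tfrac12,-\tfrac12)$, $(-\tfrac12,\tfrac12)$. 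The spindle of $(\tfrac12,\tfrac12)$ works out to $\{r: 0\le 3r_1+r_2\le 2,\ 0\le r_1+3r_2\le 2\}$, a parallelogram of area $\tfrac12$, and by symmetry each of the three spindles has area $\tfrac12$, for a total of $\tfrac32>1$. Since this type~1 triangle \emph{does} have the covering property (it is a unimodular image of $\conv\{0,2e^1,2e^2\}$), the translated spindles must overlap in their interiors, and your volume identity cannot serve as the characterization. The correct volume statement concerns the measure of the \emph{union} $T(S,K)+\Z^n$ in a fundamental domain, which does not decompose as a sum of spindle volumes.

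Even with that repaired, a determinant computation does not reach the conclusion for general pyramids; the paper's argument is structural. It translates $K$ so that the apex sits at the origin, at which point a \emph{single} spindle $T$ (the one attached to the unique point of $S$ on the base) covers $\R^n$ by integer translates, and for that one spindle integer translates do have disjoint interiors, so $T$ genuinely tiles. The Venkov--Alexandrov--McMullen theorem then forces $T$ to be centrally symmetric with centrally symmetric facets and belts of length $4$ or $6$; the spindle structure rules out belts of length $6$, McMullen's characterization makes $T$ a zonotope and then a parallelotope, which forces $K$ to be a simplex; finally $\vol(T)=1$ together with the Minkowski--Haj\'os theorem (via Lemma~\ref{lem:extremal_parallel} applied to $2T+t$) yields the unimodularity. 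Your proposal has no mechanism to exclude, say, a pyramid over a square base in $\R^3$, or a simplex whose spindle has volume $1$ but fails to tile --- these exclusions are exactly what the tiling-theory machinery provides, and they are not ``bookkeeping.''
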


Assume first that $K$ is $\conv\{0, ne^1, \ldots, ne^n\}$ (after applying an affine unimodular transformation). For each $i=1, \ldots, n$, consider the translation $K-ne^i$ and the spindle formed by the facet containing the point $e^i - ne^i$; this spindle is the unimodular transformation of the cube $[0,1]^n$. Moreover, the spindle in $K$ with respect to the point $(1, 1, \ldots, 1)$ is the cube $[0,1]^n$. Thus, each of these translation vectors leads to a lifting region which covers $\R^n$ by integer translates. An extension of the translation invariance property can be used to show that this implies $K$ has the covering property.\smallskip

We now outline the proof of the other implication.
\begin{enumerate}
\item Consider a translation of $K$ such that the apex of $K$ becomes the origin, and let $T$ be the spindle corresponding to the single integer point on the base of the pyramid. Using an extension of the translation invariance property proved above, one can show that $T$ covers $\R^n$ by integer translates. It is also not hard to show that two integer translates of $T$ cannot intersect in the interior. Thus $T$ actually tiles $\R^n$ by integer translates.

\item For a polytope $K$ and any face $F$ of $K$ of dimension $n-2$, the {\em belt} corresponding to $F$ in $K$ is the set of all facets that contain a translate of $F$ or $-F$. The {\em Venkov-Alexandrov-McMullen theorem} from the geometry of numbers states:
\begin{theorem} [{\cite[Theorem~32.2]{gruber}}]\label{thm:venkov}
Let $K$ be a compact convex set with nonempty interior that translatively tiles $\R^n$. Then the following assertions hold:
\begin{enumerate}[\upshape(a)]
	\item $K$ is a centrally symmetric polytope.
	\item All facets of $K$ are centrally symmetric.
	\item Every belt of $K$ is either of length $4$ or $6$.
\end{enumerate}
\end{theorem}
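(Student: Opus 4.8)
The plan is to reduce the three assertions to the local structure of the tiling around the faces of $K$, working down from $K$ itself to its ridges. Write the tiling as $\{K+t : t\in \Lambda\}$ for a set of translation vectors $\Lambda$. The first step---and the one I expect to be the main obstacle---is to show that $K$ is a polytope and that the tiling may be taken to be \emph{facet-to-facet and by a lattice}. Here convexity is essential: if two tiles overlap in a set of dimension $n-1$, that set lies in a common supporting hyperplane of both tiles, so contacts between neighbours occur along flat pieces of $\bd(K)$; a local covering argument at a boundary point then forces $\bd(K)$ to be piecewise affine, so $K$ is a polytope. Upgrading an arbitrary translative tiling to a lattice, facet-to-facet one is the genuinely hard, classical part of the argument (due to Venkov and McMullen); once it is in place, parts (a)--(c) are comparatively formal.

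Granting a facet-to-facet lattice tiling, I would prove (a) as follows. Each facet $F$ of $K$, with outer unit normal $u$, is shared with a unique neighbour $K+t$; since $F$ is the face of $K$ in direction $u$ and simultaneously the face of $K+t$ in direction $-u$, the facet of $K$ in direction $-u$ is the lattice translate $F-t$ of $F$. In particular every facet has an opposite facet with antipodal normal and equal $(n-1)$-volume. Thus $K$ and $-K$ have exactly the same facet normals and the same facet volumes: the facet of $-K$ with normal $-u$ is $-F$, of volume $\vol_{n-1}(F)$, which equals the volume of the facet of $K$ with normal $-u$. By Minkowski's uniqueness theorem for polytopes with prescribed facet normals and volumes, $-K=K+c$ for some $c$, i.e.\ $K$ is centrally symmetric.

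For (b) I would argue by induction on the dimension, the case $n\le 2$ being trivial since edges are centrally symmetric. Fix a facet $F$ of $K$ and let $H=\aff(F)$. The tiles lying immediately on one side of $H$ contribute their facets in $H$, and these facets cover $H$ without overlapping, hence form a translative tiling of the $(n-1)$-dimensional space $H$. By (a) all facets of all tiles that lie in $H$ are lattice translates of $F$, so $F$ itself is a translative tile in dimension $n-1$. By the inductive hypothesis $F$ is centrally symmetric. The delicate point is checking that the facets meeting $H$ really do tile $H$ and are exactly translates of $F$; this is where the facet-to-facet property and (a) are used.

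Finally, (c) is the clean geometric payoff. Fix a ridge $G$, an $(n-2)$-face of $K$, and project along $\aff(G)$ onto the two-dimensional orthogonal complement $\aff(G)^\perp$. The image $\bar K$ is a convex polygon whose edges correspond bijectively to the facets of the belt determined by $G$, and the lattice tiling projects to a translative tiling of the plane by copies of $\bar K$. A convex polygon that tiles the plane by translations is either a parallelogram or a centrally symmetric hexagon, so $\bar K$ has $4$ or $6$ edges; equivalently the belt has length $4$ or $6$. This planar classification is itself the two-dimensional instance of the theorem, so the whole argument has a pleasantly self-referential, dimension-descending structure, with all the real difficulty concentrated in the initial reduction to a lattice, facet-to-facet tiling.
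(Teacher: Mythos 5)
The paper does not prove this statement---it is quoted from Gruber's book as a classical result---so your attempt can only be compared with the standard proof of Minkowski, Venkov, Alexandrov and McMullen. Your argument for (a) is essentially the classical one: the matching of boundary contributions forces the facet with normal $u$ and the facet with normal $-u$ to have equal $(n-1)$-volume, and Minkowski's uniqueness theorem then gives $-K=K+c$. (In fact this step does not need the facet-to-facet or lattice hypotheses; it only needs that contact between tiles occurs along pieces with opposite outer normals.) The structural problem is your opening reduction. In the literature, the fact that a convex translative tile admits a lattice, facet-to-facet tiling is a \emph{consequence} of the theorem you are proving: one first establishes (a)--(c) for an arbitrary translative tiling, and only then shows that a polytope satisfying (a)--(c) tiles by a lattice. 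Taking the lattice facet-to-facet statement as a black box is therefore not a reduction but an appeal to something at least as deep, and arguably circular.

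There are also concrete failures in (b) and (c). For (b), the claim that the facets of the tiles lying in $H=\aff(F)$ cover $H$ without overlapping is false even for lattice, facet-to-facet tilings: tile $\R^3$ by hexagonal prisms (the regular hexagon tiling of the plane times unit intervals); the affine hull of a vertical facet is a vertical plane over a line of the hexagon tiling, and that line passes through the interiors of other hexagons, so the plane meets the interiors of other prisms and is certainly not tiled by facets. The correct argument is local to $F$: the facet $F$ is partitioned by its intersections with the facets of the adjacent tiles, each of which is a translate of the opposite facet and hence (by (a)) of $-F$; from this mutual exact covering of $F$ by translates of $-F$ one deduces that $F$ is centrally symmetric. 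For (c), projecting the tiling along $\aff(G)$ does not yield a tiling of the plane: the fibre over any point of the $2$-dimensional complement is an $(n-2)$-dimensional affine subspace meeting infinitely many tiles, so the projected copies of $\bar K$ cover the plane with infinite multiplicity, and the classification of convex polygons that tile the plane cannot be invoked. The standard proof of the belt condition is again local: at a generic point in the relative interior of the ridge one slices with the orthogonal $2$-plane and counts the dihedral angles of the tiles meeting that point (each contributes either the angle of $K$ at a ridge parallel to $G$ or a straight angle $\pi$, summing to $2\pi$), and this bookkeeping yields a belt of length $4$ or $6$. So the dimension-descending picture is attractive, but steps (b) and (c) as written do not go through.
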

This implies that $T$ is centrally symmetric with centrally symmetric facets.

\item It can be shown that since $T$, in this special case, is a spindle with centrally symmetric facets, every belt of $T$ is of length 4. This, in turn, can be used to show that every face of dimension $n-2$ is centrally symmetric. {\em McMullen's characterization of zonotopes}~\cite{mcmullen} then implies that $T$ is a zonotope. (A zonotope is the Minkowski sum of finitely many line segments; equivalently, a zonotope is the image under an affine map ---not necessarily invertible--- of a cube.)

\item Combinatorial geometry of zonotopes can be used to show that any zonotope whose belts have all length 4 is a parallelotope (i.e., the invertible affine image of a cube). Thus $T$ is a parallelotope and $K$ is a simplex.

\item Since $T$ tiles $\R^n$ by integer translates, $T$ has volume $1$. Moreover, it can be shown that there exists a translation vector $t\in \R^n$ such that the polytope $2T+t$ is centrally symmetric about the origin and the only integer point in its interior is the origin. Moreover, each facet of $2T+t$ contains exactly one integer point in its relative interior. We now appeal to the {\em Minkowski-Haj\'os theorem}:

\begin{theorem}[\cite{GruberLekkerkerker-Book87}, Section 12.4, Chapter 2]\label{thm:minkowski3}
Let $S$ be a $0$-symmetric parallelotope such that each it has no integer point in its interior besides $0$, and suppose that $S$ has volume $2^n$. Then there exists a unimodular transformation $U$ such that after applying $U$, $S$ will have two parallel facets given by $-1 \leq x_1 \leq 1$.
\end{theorem}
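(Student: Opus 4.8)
The result is the classical Minkowski--Haj\'os theorem, and the plan is to recast the hypothesis as a statement about lattice tilings of $\R^n$ by unit cubes and then invoke Haj\'os's theorem. Since $S$ is a $0$-symmetric parallelotope of volume $2^n$, I would first write it in the normalized form $S=\{x\in\R^n : \|Ax\|_\infty\le 1\}$ for a real matrix $A$ with $|\det A|=1$ (the rows of $A$ are the facet normals, the columns of $A^{-1}$ the edge vectors). By Theorem~\ref{th:mink} the boundary of $S$ already contains a nonzero integer point, so the hypothesis that $\intr(S)$ contains no nonzero integer point says precisely that $\Z^n$ is a \emph{critical lattice} for $S$.

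Next I would apply the linear map $A$. Then $A(S)=[-1,1]^n$ and $\Lambda:=A\Z^n$ is a lattice with $\det\Lambda=1$, and the hypothesis becomes $\Lambda\cap(-1,1)^n=\{0\}$. The key observation is that, since $\vol([0,1)^n)=1=\det\Lambda$ and the difference set of $[0,1)^n$ is exactly $(-1,1)^n$, the condition $\Lambda\cap(-1,1)^n=\{0\}$ is equivalent to saying that the translates $\{[0,1)^n+\lambda : \lambda\in\Lambda\}$ form a \emph{packing}; and a lattice packing of density one is automatically a tiling. Thus the hypothesis is equivalent to the assertion that the half-open unit cube tiles $\R^n$ by translations from $\Lambda$.

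Now I would invoke Haj\'os's theorem: in any lattice tiling of $\R^n$ by unit cubes, two cubes share a complete facet, equivalently the tiling lattice $\Lambda$ contains one of the standard basis vectors $e_i$. This $e_i$ is necessarily primitive in $\Lambda$ (otherwise $e_i/k$ would be a nonzero point of $\Lambda\cap(-1,1)^n$). Pulling back through $A^{-1}$ produces a primitive vector $A^{-1}e_i\in\Z^n$ encoding the distinguished coordinate direction; composing with a unimodular map sending this vector to $e_1$ places the corresponding pair of opposite facets of $S$ on the hyperplanes $x_1=\pm1$, which is the desired conclusion.

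The main obstacle is Haj\'os's theorem itself, which is genuinely deep and admits no short geometric proof. The route I would take is Haj\'os's reduction to a purely group-theoretic statement about \emph{simple factorizations} of finite abelian groups: after normalizing the lattice to unitriangular form and clearing denominators, the cube tiling corresponds to a factorization $G=A_1\cdots A_n$ of a finite abelian group in which each $A_i$ is a cyclic string $\{0,g_i,\dots,(m_i-1)g_i\}$, and the geometric conclusion $e_i\in\Lambda$ translates into the assertion that one of the factors $A_i$ is a subgroup. Establishing that some $A_i$ must be a subgroup is the crux, and it is exactly Haj\'os's 1941 theorem; essentially all of the difficulty of the argument is concentrated in this combinatorial-algebraic core, while the geometric reduction above is routine.
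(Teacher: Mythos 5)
The paper itself does not prove Theorem~\ref{thm:minkowski3} --- it is quoted from Gruber--Lekkerkerker --- so your sketch can only be measured against the standard argument. Your reduction is correct up to and including the application of Haj\'os's theorem: writing $S=\{x:\|Ax\|_\infty\le 1\}$ with $|\det A|=1$ and $\Lambda=A\Z^n$, the hypothesis is exactly $\Lambda\cap(-1,1)^n=\{0\}$ with $\det\Lambda=1$, Haj\'os gives $e_i\in\Lambda$, and $v=A^{-1}e_i$ is then a primitive integer vector.

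The final step, however, has a genuine gap. The vector $v=A^{-1}e_i$ is a \emph{column of $A^{-1}$}, i.e.\ an edge generator of $S$ (equivalently: the center of a facet of $S$ is an integer point). The conclusion of Theorem~\ref{thm:minkowski3} --- and the way it is used in Lemma~\ref{lem:extremal_parallel}, where one needs $S\subseteq\{|x_1|\le1\}$ with facets lying in the hyperplanes $x_1=\pm1$ --- requires instead that some \emph{facet normal}, i.e.\ some row of $A$, be a primitive integer vector. These are dual statements and do not coincide for $n\ge3$. Concretely, if $U$ is unimodular with $Uv=e_1$, then $U(S)$ has $e_1$ as an edge generator, so $U(S)=P'+[-1,1]e_1$ for an $(n-1)$-dimensional parallelotope $P'$; but $P'$ need not lie in $\{x_1=0\}$, and the two facets at the ends of the $e_1$-direction lie in $\pm e_1+\aff(P')$, which equal $\{x_1=\pm1\}$ only if all remaining edge generators have vanishing first coordinate. (In dimension $2$ the cofactor identity makes a column of $A^{-1}$ integral exactly when the complementary row of $A$ is, which is probably why the slip is easy to make.) The standard repair is the full inductive consequence of Haj\'os's theorem: a cube-tiling lattice admits a basis that, after permuting coordinates, is triangular with unit diagonal; equivalently $A=TV$ with $V$ unimodular and $T$ permuted-triangular with unit diagonal, so that one row of $AV^{-1}=T$ is a standard unit vector and $U=V$ yields the facets $x_1=\pm1$. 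The induction (project $\Lambda$ along the $e_i$ furnished by Haj\'os and check that the quotient is again a cube-tiling lattice of determinant one) is routine, and the group-theoretic factorization theorem you identify is indeed where all the difficulty lives; but as written, your geometric translation of Haj\'os's conclusion proves the dual of the statement that is actually needed.
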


We use this theorem to prove the following lemma.

\begin{lemma}\label{lem:extremal_parallel}
Let $S$ be a $0$-symmetric parallelotope with no integer point in its interior besides $0$, and suppose that $S$ has volume $2^n$. If every facet of $S$ has exactly one integer point in its relative interior, then $S$ is a unimodular transformation of the cube $[-1,1]^n$.
\end{lemma}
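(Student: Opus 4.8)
The plan is to prove the lemma by induction on $n$, using the Minkowski--Haj\'os theorem (Theorem~\ref{thm:minkowski3}) to peel off one coordinate at a time. The base case $n=1$ is immediate: a $0$-symmetric parallelotope in $\R$ of volume $2$ is exactly $[-1,1]$.

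For the inductive step, I would first apply Theorem~\ref{thm:minkowski3} to obtain a unimodular transformation after which $S$ has the two parallel facets $x_1=\pm 1$. Since these transformations map $\Z^n$ bijectively onto itself, all hypotheses ($0$-symmetry, volume $2^n$, no nonzero interior lattice point, exactly one relative-interior lattice point per facet) are preserved, so it suffices to prove the conclusion for the normalized $S$. Writing $S$ in generator form $S=V[-1,1]^n$, the requirement that $x_1=\pm 1$ be facets forces (after a sign flip of one generator) exactly one generator to have first coordinate $1$ and all others to have first coordinate $0$: thus $v_1=(1,w)$ and $v_i=(0,v_i')$ for $i\ge 2$, with $w\in\R^{n-1}$. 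Consequently $S$ is a sheared prism, $S=\{(t,\,tw+z'):t\in[-1,1],\,z'\in S'\}$, where $S':=V'[-1,1]^{n-1}$ is the $0$-symmetric parallelotope in $\R^{n-1}$ generated by $v_2',\dots,v_n'$, and $|\det V'|=|\det V|=1$, so $\vol(S')=2^{n-1}$.

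The crux is to show that $S'$ inherits the remaining hypotheses, which I would read off the prism description. Since the only integer first coordinate in $(-1,1)$ is $0$, the lattice points of $\intr(S)$ are exactly the points $(0,q')$ with $q'\in\intr(S')\cap\Z^{n-1}$; hence the condition on $S$ gives $\intr(S')\cap\Z^{n-1}=\{0\}$. Likewise, each of the $2(n-1)$ ``side'' facets of $S$ (those on which a generator $v_i$, $i\ge 2$, is extremal) is a sheared prism over the corresponding facet of $S'$, and the same ``$t\in\{0\}$'' argument shows its relative-interior lattice points correspond bijectively to those of the matching facet of $S'$; thus every facet of $S'$ has exactly one lattice point in its relative interior. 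By the induction hypothesis $S'$ is a unimodular image of $[-1,1]^{n-1}$, so after a unimodular transformation in the last $n-1$ coordinates I may assume $S'=[-1,1]^{n-1}$ as a set.

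It then remains to handle the shear vector $w$. After the reduction, the top facet of $S$ is $\{1\}\times(w+[-1,1]^{n-1})$, whose relative-interior lattice points are the $(1,q')$ with $q'\in\big(\prod_j(w_j-1,w_j+1)\big)\cap\Z^{n-1}$. An open interval $(w_j-1,w_j+1)$ of length $2$ contains exactly one integer precisely when $w_j\in\Z$ (and two otherwise), so the hypothesis that this facet carries exactly one lattice point forces $w\in\Z^{n-1}$. The integer shear $(x_1,x')\mapsto(x_1,\,x'-x_1w)$ is unimodular and carries $S$ onto $[-1,1]^n$, completing the induction. I expect the main obstacle to be the bookkeeping in the descent step --- verifying cleanly that the Minkowski--Haj\'os normalization turns $S$ into a sheared prism over $S'$ and that both the interior count and the per-facet counts pass to $S'$; once that is in place, the counting argument pinning down $w$ is routine.
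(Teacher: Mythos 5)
Your proposal is correct and follows essentially the same route as the paper's proof: induction on $n$, Minkowski--Haj\'os to normalize the facet pair $x_1=\pm1$, the induction hypothesis applied to the $x_1=0$ cross-section (your $S'$), and then the observation that a non-integral translate of the open unit cube contains at least two lattice points to pin down the remaining freedom. Your explicit sheared-prism parametrization and the interval count for the shear vector $w$ are just a more concrete rendering of the paper's statement that the facets $S\cap\{x_1=\pm1\}$ must be \emph{integer} translates of the middle slice.
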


\begin{proof}
We prove this by induction on the dimension $n$. For $n=1$, this is trivial. Consider $n \geq 2$. The Minkowski--Haj\'os theorem (Theorem~\ref{thm:minkowski3}) implies that we can apply a unimodular transformation such that $S$ = $\conv\{(S\cap \{x_1 = -1\}) \cup (S\cap \{x_1 = 1\})\}$. Note that $S\cap \{x_1 = -1\}$, $S\cap \{x_1 = 0\}$ and $S\cap \{x_1 = 1\}$ are all translations of each other. Therefore, $2^n = \vol(S)= 2\vol(S\cap \{x_1 = 0\})$ (here we measure volume of $S\cap \{x_1 = 0\}$ in the $(n-1)$-dimensional linear space $x_1 = 0$). So $S\cap \{x_1 = 0\}$ has volume $2^{n-1}$. Therefore, $S\cap \{x_1 = 0\}$ is also a $0$-symmetric parallelotope in the linear space $x_1=0$ with volume $2^{n-1}$, and its only integer point is the origin. If any facet $F$ of  $S\cap \{x_1 = 0\}$ contains two or more integer points in its relative interior, then the facet of $S$ passing through $F$ will contain these integer points in its relative interior, in contradiction to the hypothesis of the theorem. Therefore, every facet of $S\cap \{x_1 = 0\}$ contains at most one integer point in its relative interior. By the induction hypothesis, $S\cap \{x_1 = 0\}$ is equivalent to the cube $\{-1 \leq x_i \leq 1,\; i= 2, \ldots, n\} \cap \{x_1 = 0\}$. Recall that $S\cap \{x_1 = -1\}$ and $S\cap \{x_1 = 1\}$ are translations of $S\cap \{x_1 = 0\}$. Since $S\cap \{x_1 = 0\}$ is equivalent to the cube $\{-1 \leq x_i \leq 1,\; i= 2, \ldots, n\} \cap \{x_1 = 0\}$, any translation by a non-integer vector $(x_1, x_2, \ldots, x_n)$ with $x_1\in \Z$ will contain at least two integer points in its relative interior. But the facets $S\cap \{x_1 = -1\}$ and $S\cap \{x_1 = 1\}$ contain at most one integer point in their relative interior. Therefore, they are in fact integer translates of $S\cap \{x_1 = 0\}$. This proves the lemma.
\end{proof}

We know that $2T+t$ satisfies the hypothesis of Lemma~\ref{lem:extremal_parallel} and therefore $2T+t$ is a unimodular transformation of the cube $[-1,1]^n$. This can be used to show that $K$ is the image of $\conv\{0, ne^1, \ldots, ne^n\}$ under an affine unimodular transformation.

\end{enumerate}

For further details, we refer the reader to~\cite[Section 6]{averkov-basu-lifting} and~\cite[Section 3]{basu2012unique}.


\bibliographystyle{plain}
\bibliography{full-bib}
\end{document}